\newif\ifignore 
\newcommand{\auxproof}[1]{
\ifignore\mbox{}\newline
\textbf{PROOF:} \dotfill\newline
{\it #1}\mbox{}\newline
\textbf{ENDPROOF}\dotfill
\fi}
\newtheorem{theorem}{Theorem}
\newtheorem{lemma}[theorem]{Lemma}
\newtheorem{proposition}[theorem]{Proposition}
\newtheorem{corollary}[theorem]{Corollary}
\newtheorem{definition}[theorem]{Definition}
\newenvironment{proof}[1][Proof]%
   { \begin{trivlist}%
     \item[\hskip \labelsep {\bfseries #1}]%
   }%
   { \end{trivlist}%
   }
\newcommand{\QEDbox}{\square}
\newcommand{\QED}{\hspace*{\fill}$\QEDbox$}
\newcommand{\after}{\mathrel{\circ}}
\newcommand{\cat}[1]{\ensuremath{\mathbf{#1}}}
\newcommand{\id}{\ensuremath{\mathrm{id}}}
\newcommand{\tuple}[2]{\ensuremath{\langle #1, #2 \rangle}}
\newcommand{\DL}{\cat{DL}}
\newcommand{\pDL}{\cat{CHyp}}
\newcommand{\Coh}{\cat{Coh}}
\newcommand{\Heyt}{\cat{Heyt}}
\newcommand{\pHA}{\cat{FHyp}}
\newcommand{\Aa}{\cat{L}}
\newcommand{\B}{\cat{B}}
\newcommand{\C}{\cat{C}}
\newcommand{\D}{\cat{D}}
\renewcommand{\L}{L}
\newcommand{\K}{K}
\newcommand{\Set}{\cat{Set}}
\newcommand{\Sets}{\cat{Set}}
\newcommand{\Sp}{\mathcal{K}}
\newcommand{\CE}{(\_)^\delta}
\renewcommand{\S}{\mathcal{S}}
\newcommand{\A}{\mathcal{A}}
\newcommand{\Op}[1]{I({#1}^\delta)}
\newcommand{\Cl}[1]{F({#1}^\delta)}
\newcommand{\F}{F\!m}
\newcommand{\op}{\mathrm{op}}
\newcommand{\Th}{\mathbb{T}}
\newenvironment{bijectivecorrespondence}
  {\newif\ifbijnotfirst
   \global\bijnotfirstfalse
   \global\def\bijprev{}
   \normalsize
   \begin{tabular}{cl}}
  {\end{tabular}
   }
\newcommand{\correspondence}[2][]{%
  \ifbijnotfirst%
    \rule{0pt}{5.8pt}%
    \smash{\ensuremath{\infer={\hphantom{#2}}{\hphantom{\bijprev}}}} \\%
  \fi%
  \global\bijnotfirsttrue%
  \global\def\bijprev{#2}%
  \ensuremath{#2} & #1 \\%
  }
\title{Generalising canonical extension to the categorical setting}
\author{Dion Coumans\thanks{\it Institute for Mathematics, Astrophysics and Particle Physics, Radboud Universiteit Nijmegen, P.O. Box 9010, 6500 GL Nijmegen, The Netherlands, d.coumans@math.ru.nl}}
\date{\small \today}
\begin{document}
\maketitle

\begin{abstract}
Canonical extension has proven to be a powerful tool in algebraic study of propositional logics. In this paper we describe a generalisation of the theory of canonical extension to the setting of first order logic. We define a notion of canonical extension for \emph{coherent categories}. These are the categorical analogues of distributive lattices and they provide categorical semantics for coherent logic, the fragment of first order logic in the connectives $\wedge$, $\vee$, $0$, $1$ and $\exists$. We describe a universal property of our construction and show that it generalises the existing notion of canonical extension for distributive lattices. 
Our new construction for coherent categories has led us to an alternative description of the \emph{topos of types}, introduced by Makkai in \cite{Mak}. This allows us to give new and transparent proofs of some properties of the action of the topos of types construction on morphisms. Furthermore, we prove a new result relating, for a coherent category $\C$, its topos of types to its category of models (in $\Sets$).\\
\\
{\bf Key words:} canonical extension, coherent categories, topos of types.
\end{abstract}

\setlength{\parindent}{0pt}
\section{Introduction}
In 1951, J\'onsson and Tarski introduced the notion of canonical extension for Boolean algebras with operators \cite{JoTa}. Canonical extension provides an algebraic formulation of duality theory and a tool to derive representation theorems. In the last decades the theory of canonical extensions has been simplified and generalised and it is now applicable in the broad setting of distributive lattices and even partially ordered sets \cite{GeJo94,GePr08, DGP}. Canonical extension has proven to be a powerful tool in the algebraic study of propositional logics. Generalising the notion of canonical extension to the categorical setting opens the way to the application of those techniques in the study of predicate logics. 

We focus on distributive lattices and their categorical counterparts. For a distributive lattice $\L$, its canonical extension $\L^\delta$ may be concretely described as the downset lattice of the poset $(PrFl(L), \supseteq)$ of prime filters of $\L$ ordered by reverse inclusion. The assignment $\L \mapsto \L^\delta$ extends to a functor $\CE \colon \DL \to \DL^+$, from the category of distributive lattices to the category of completely distributive algebraic lattices, which is left adjoint to the forgetful functor $\DL^+ \to \DL$. 

In this paper we define a notion of canonical extension for \emph{coherent categories}, the categorical analogues of distributive lattices. These provide categorical semantics for coherent logic, the fragment of first order logic in the connectives $\wedge$, $\vee$, $0$, $1$ and $\exists$. Our construction is inspired by the work of Pitts. In \cite{Pi83, Pi89}, he defines, for a coherent category $\C$, its \emph{topos of filters} $\Phi(\C)$, which is a categorical generalisation of the functor which sends a lattice $\L$ to the lattice $Idl(Fl(\L))$ of ideals of the lattice of filters of $\L$. In his description of $\Phi(\C)$ he exploits the correspondence between coherent categories and 
so-called coherent hyperdoctrines. This correspondence also forms the basis of our construction, which we describe in Section~\ref{sec:extcoh}. We show that our notion of canonical extension for coherent categories extends the existing notion of canonical extension for distributive lattices, which, viewed as categories, are coherent categories. Furthermore, we prove that our construction may be characterised by a universal property, which is similar to the one known from the algebraic setting.

In Section~\ref{sec:Heyt} we restrict our attention to Heyting categories. These provide sound and complete semantics for intuitionistic first order logic. Heyting categories form a non-full subcategory of the category of coherent categories. For any coherent category, its canonical extension is a Heyting category. Furthermore, we prove that the canonical extension of a morphism of Heyting categories is again a morphism of Heyting categories. 

In  \cite{Mak} Makkai introduces, for a coherent category $\C$, the \emph{topos of types} $T(\C)$ of $\C$. Magnan claims in his thesis \cite{Mag} that the topos of types construction is a natural generalisation of canonical extension to the categorical setting. Furthermore, at a talk at PSSL in 1999, Magnan's PhD advisor Reyes announced (but did not prove) that this construction may be used to prove interpolation for different first order logics  \cite{Rey}. In \cite{Bu04}, Butz gives a logical description of Makkai's topos of types, also drawing attention to the connection with canonical extension. 
The topos of types construction is closely related to our construction of canonical extension of coherent categories and this has led us to an alternative description of the topos of types. We work this out in Section~\ref{sec:tt}. 

Our alternative description of Makkai's construction sheds new light on some of its properties, as we illustrate in Section~\ref{sec:proptt}. We study the action of the topos of types construction on morphisms. Furthermore, we prove a new result relating, for a coherent category $\C$, the topos of types $T(\C)$ of $\C$ and the category of models of $\C$ (in $\Set$). 

Pitts used the topos of filters construction to give a (new) proof of the fact that intuitionistic first order logic has the interpolation property. However, in his proof the topos of filters could be replaced by the topos of types. Based on the algebraic situation, we expect that, compared to the topos of filters, the topos of types construction behaves better with respect to preservation of additional axioms. Therefore we hope to apply this construction in the study of (open) interpolation problems for first order logics. This is left to future work. 

\section{Canonical extension for distributive lattices}
\label{sec:canextdl}
We recall some essential facts about canonical extension for lattices, which we rely on in the remainder. Along the way we introduce some notation. For more details the reader may consult \cite{GeHa, GeJo94, GePr08}. In this paper, all lattices are assumed to be bounded.

\begin{definition}
Let $\L$ be a lattice. A \emph{canonical extension of $\L$} is an embedding $e \colon \L \hookrightarrow C$, of $\L$ in a complete lattice $C$ which satisfies the following two conditions:
\begin{enumerate}
\renewcommand{\theenumi}{\roman{enumi}}
\item every element of $C$ may be written both as a join of meets and as a meet of joins of elements in the image of $e$ ($e$ is \emph{dense});
\item for all $F, I \subseteq \L$, if $\bigwedge e[F] \le \bigvee e[I]$, then there exist finite subsets $F' \subseteq F$ and $I' \subseteq I$ such that $\bigwedge F' \le \bigvee I'$ ($e$ is \emph{compact}).
\end{enumerate}
\end{definition}

\begin{theorem}
Every lattice has an (up-to-isomorphism) unique canonical extension. 
\end{theorem}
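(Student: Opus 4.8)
The plan is to establish existence and uniqueness separately. For existence I would give an explicit construction from the filters and ideals of $\L$; for uniqueness I would use density and compactness to show that any two canonical extensions share a determined skeleton of \emph{closed} and \emph{open} elements whose order relations are fixed by $\L$ alone, and that this forces a global isomorphism.

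For existence, let $\mathcal{F}$ and $\mathcal{I}$ denote the sets of filters and ideals of $\L$, and declare a filter $F$ and an ideal $I$ to be incident when $F \cap I \neq \emptyset$. This polarity induces a Galois connection between $\powerset(\mathcal{F})$ and $\powerset(\mathcal{I})$, and the Galois-closed subsets of $\mathcal{F}$ form a complete lattice $C$. I would define $e \colon \L \to C$ by sending $a$ to the closure of $\{F \in \mathcal{F} : a \in F\}$, and then verify the three required properties: that $e$ is an order-embedding, that it is dense, and that it is compact. Compactness is essentially built in, since incidence $F \cap I \neq \emptyset$ was chosen precisely to record the finite inequalities $\bigwedge F' \le \bigvee I'$; density amounts to identifying the meets $\bigwedge e[F]$ and joins $\bigvee e[I]$ with the closed and open elements of $C$ and checking that these generate $C$ under joins and under meets.

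For uniqueness, let $e_1 \colon \L \to C_1$ and $e_2 \colon \L \to C_2$ be two canonical extensions, and call an element \emph{closed} if it has the form $\bigwedge e_i[F]$ for some filter $F$ and \emph{open} if it has the form $\bigvee e_i[I]$ for some ideal $I$. The heart of the argument is a handful of computations showing that the relevant order relations are intrinsic to $\L$: using compactness one first obtains the criterion $\bigwedge e_i[F] \le \bigvee e_i[I] \iff F \cap I \neq \emptyset$ relating closed and open elements; specialising to principal ideals gives $\bigwedge e_i[F] \le e_i(a) \iff a \in F$, whence $\bigwedge e_i[F] \le \bigwedge e_i[G] \iff G \subseteq F$ for the order among closed elements, and dually $\bigvee e_i[I] \le \bigvee e_i[J] \iff I \subseteq J$ among open elements. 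Since none of these criteria depends on $i$, the assignments $\bigwedge e_1[F] \mapsto \bigwedge e_2[F]$ and $\bigvee e_1[I] \mapsto \bigvee e_2[I]$ are well-defined, mutually compatible order-isomorphisms fixing $\L$. By density every element of $C_i$ is both a join of closed elements and a meet of open elements, and I would use this to extend the skeleton isomorphism to a bijection $C_1 \to C_2$ preserving arbitrary joins and meets.

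The step I expect to be the main obstacle is this last extension in the uniqueness proof. The difficulty is that density furnishes each element with two different representations, and one must check that the map defined on joins of closed elements is independent of the representation chosen and reflects as well as preserves order; this is exactly the point where compactness is indispensable, since it is what converts infinitary inequalities in $C_i$ into finitary ones in $\L$, so that no information is lost in transit between the two extensions. Existence, by comparison, is a largely routine verification once the filter--ideal polarity is set up.
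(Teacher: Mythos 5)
The paper does not actually prove this theorem itself --- it is recalled as background, with the proof deferred to the cited literature (\cite{GeHa, GeJo94, GePr08}) --- and your plan reproduces exactly the standard argument from those sources: the Gehrke--Harding filter--ideal polarity for existence, and uniqueness via the intrinsically determined skeleton of closed and open elements (with the criteria $\bigwedge e[F] \le \bigvee e[I] \iff F \cap I \neq \emptyset$, $\bigwedge e[F] \le \bigwedge e[G] \iff G \subseteq F$, and their dual), with compactness carrying the final extension step, exactly as you flag. The one small point to attend to when writing it out: the definition requires $e$ to be a \emph{lattice} embedding, not merely an order-embedding --- this is also what licenses replacing arbitrary subsets $F, I \subseteq \L$ by the filter and ideal they generate when verifying compactness --- but in your construction this is a routine computation with the polars of $\{F : a \in F\}$, so the plan is sound as it stands.
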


We denote the canonical extension of $\L$ by $e_\L \colon \L \hookrightarrow \L^\delta$. From now on we view $\L$ as a subset of its canonical extension and, for $a \in \L$, we just write $a$ for the image $e_\L(a)$ in $\L^\delta$.

We introduce some terminology and notation which will be useful in the remainder. We write $\Cl{L}$ for the meet closure of $\L$ in $\L^\delta$ and $\Op{L}$ for the join closure of $\L$ in $\L^\delta$, {\it i.e.},
$$
\begin{array}{rcl}
\Cl{L} &=& \{u \in \L^\delta \,|\, u = \bigwedge \{a \in \L \,|\, u \le a\}\},\\
\Op{L} &=& \{u \in \L^\delta \,|\, u = \bigvee \{a \in \L \,|\, a \le u\}\}.
\end{array}
$$   
The poset $\Cl{L}$ (with the induced order) is isomorphic to the lattice $Fl(\L)$ of filters of $\L$ ordered by \emph{reverse} inclusion. This isomorphism is given by
\begin{equation}
\label{isoflt}
\begin{array}{rcll}
Fl(\L) & \leftrightarrows & F(\L^\delta)\\
F &\mapsto& \bigwedge F =: x_F&\text{(viewing $F$ as a subset of $\L^\delta$)}\\
F_x := \{a \in \L\,|\,x \le a\} &\mapsfrom& x.
\end{array}
\end{equation} 
The elements of $\Cl{L}$ are called filter elements of $\L^\delta$. Dually, the poset $\Op{L}$ is isomorphic to the lattice $Idl(L)$ of ideals of $\L$ ordered by inclusion. The elements of $\Op{L}$ are called ideal elements of $\L^\delta$.

The denseness of $e_\L$ gives us, in principle, two natural ways to lift an order preserving map $f \colon \L \to \K$ between lattices to an order preserving map between their canonical extensions. These two extensions, which are denoted $f^\sigma$ and $f^\pi$, are defined as follows,
$$
\begin{array}{rcll}
f^\sigma(x) &=& \bigwedge\{f(a)\,|\,x \le a \in \L\} &\text{for $x \in \Cl{L}$},\\
f^\sigma(u) &=& \bigvee\{f^\sigma(x)\,|\,u \ge x \in \Cl{L}\} &\text{for $u \in \L^\delta$},\\
\\
f^\pi(y) &=& \bigvee\{f(a)\,|\,y \ge a \in \L\} &\text{for $y \in \Op{L}$},\\
f^\pi(u) &=& \bigwedge\{f^\pi(y)\,|\,u \le y \in \Op{L}\} &\text{for $u \in \L^\delta$}.\\
\end{array}
$$

The mappings $f \mapsto f^\sigma$ and $f \mapsto f^\pi$ do not preserve composition in general. The following proposition describes situations in which they do. 

\begin{proposition}
\label{prop:prescomp}
Let $K, L$ and $M$ be lattices with order-preserving maps $M \xrightarrow{g} L^n \xrightarrow{f} K$. If $f$ preserves finite joins in each coordinate (while keeping the other coordinates fixed), then \mbox{$(f \after g)^\sigma = f^\sigma \after g^\sigma$}. Dually, if $f$ preserves finite meets in each coordinate, then $(f \after g)^\pi = f^\pi \after g^\pi$. 
\end{proposition}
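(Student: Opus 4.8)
The plan is to exploit denseness to reduce everything to filter elements, prove the two extensions agree there, and then lift by a continuity argument in which the coordinatewise join-preservation of $f$ is exactly what is needed. Throughout I use that canonical extension commutes with finite products, so that $(L^n)^\delta \cong (L^\delta)^n$ and the filter elements of $(L^\delta)^n$ are precisely the tuples $(x_1,\dots,x_n)$ with each $x_j \in \Cl{L}$. I treat the $\sigma$-case; the $\pi$-case is entirely dual.

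First I would establish agreement on filter elements: for every $p \in \Cl{M}$, I claim $(f \after g)^\sigma(p) = f^\sigma(g^\sigma(p))$. Writing $F_p = \{a \in M \mid p \le a\}$ for the corresponding filter, one has $g^\sigma(p) = \bigwedge\{g(a) \mid a \in F_p\}$, a meet of elements of $L^n$ and hence itself a filter element of $(L^\delta)^n$. Unfolding definitions, $f^\sigma(g^\sigma(p)) = \bigwedge\{f(\vec c) \mid \vec c \in L^n,\ g^\sigma(p) \le \vec c\}$ while $(f \after g)^\sigma(p) = \bigwedge\{f(g(a)) \mid a \in F_p\}$. The inequality $f^\sigma(g^\sigma(p)) \le (f \after g)^\sigma(p)$ is immediate, since each $g(a)$ with $a \in F_p$ occurs among the $\vec c$ (as $g^\sigma(p) \le g(a)$), so the first meet is over a larger set. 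For the reverse, take any $\vec c \ge g^\sigma(p) = \bigwedge\{g(a)\mid a\in F_p\}$; by compactness there is a finite $\{a_1,\dots,a_k\} \subseteq F_p$ with $\bigwedge_i g(a_i) \le \vec c$, and since $F_p$ is a filter the element $a = a_1 \wedge \dots \wedge a_k$ lies in $F_p$ and satisfies $g(a) \le \vec c$, whence $f(g(a)) \le f(\vec c)$. Thus every generator of the right-hand meet dominates one of the left-hand meet, giving $(f \after g)^\sigma(p) \le f^\sigma(g^\sigma(p))$. Note this step uses only monotonicity of $f$ and $g$.

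Next I would lift to arbitrary $w \in M^\delta$. By definition $(f \after g)^\sigma(w) = \bigvee\{(f \after g)^\sigma(p) \mid p \in \Cl{M},\ p \le w\}$, so the previous step gives $(f \after g)^\sigma(w) = \bigvee_{p \le w} f^\sigma(g^\sigma(p))$. Since $g^\sigma(w) = \bigvee_{p \le w} g^\sigma(p)$ and $f^\sigma$ is monotone, every term lies below $f^\sigma(g^\sigma(w))$, so one always has $(f \after g)^\sigma(w) \le f^\sigma(g^\sigma(w)) = (f^\sigma \after g^\sigma)(w)$, with no hypothesis on $f$. The entire weight of the proposition therefore sits in the reverse inequality, which is exactly the assertion that $f^\sigma$ preserves the join $g^\sigma(w) = \bigvee\{g^\sigma(p) \mid p \le w\}$. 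This is where the hypothesis is used and where I expect the main difficulty to lie: I would invoke the classical fact that the $\sigma$-extension of a map preserving finite joins in each coordinate is a complete operator, i.e.\ preserves arbitrary joins. The family $\{g^\sigma(p)\mid p\le w\}$ consists of filter elements of $(L^\delta)^n$ and, being the image under the monotone $g^\sigma$ of the (up-directed) filter elements below $w$, may be taken up-directed; preservation of this join by $f^\sigma$ then yields $f^\sigma(g^\sigma(w)) = \bigvee_{p\le w} f^\sigma(g^\sigma(p)) = (f \after g)^\sigma(w)$. The hard part is genuinely this passage of the join through $f^\sigma$: expanding $f^\sigma(g^\sigma(w))$ as a join over filter elements $y \le g^\sigma(w)$, one must use compactness once more to decompose each such $y$, coordinate by coordinate, into pieces dominated by the $g^\sigma(p)$, at which point the coordinatewise finite-join-preservation of $f$ distributes the value of $f^\sigma$ over the join; without that hypothesis the inequality fails.

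Finally, the $\pi$-statement follows by the order-dual of this entire argument: the filter elements and filters $F_p$ are replaced by ideal elements and ideals, every meet is interchanged with a join, compactness is applied in its dual form, and the hypothesis that $f$ preserves finite meets in each coordinate yields that $f^\pi$ preserves arbitrary meets, giving $(f \after g)^\pi = f^\pi \after g^\pi$.
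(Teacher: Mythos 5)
Your proposal cannot be matched against a proof in the paper, because the paper gives none: Proposition~\ref{prop:prescomp} is recalled as known background, with the reader referred to \cite{GeHa, GeJo94, GePr08} for details. Measured against that literature, your argument is the standard one and is essentially correct: agreement of $(f \after g)^\sigma$ and $f^\sigma \after g^\sigma$ on filter elements via compactness (valid, as you note, for arbitrary monotone maps), the inequality $(f \after g)^\sigma \le f^\sigma \after g^\sigma$ for free, and the operator hypothesis on $f$ entering only to push the join $g^\sigma(w) = \bigvee\{g^\sigma(p) \mid p \in \Cl{M},\ p \le w\}$ through $f^\sigma$.

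Three points in the lifting step deserve sharpening. First, your gloss of the classical fact is too strong: the $\sigma$-extension of an operator is a \emph{complete operator}, meaning it preserves arbitrary joins \emph{in each coordinate separately}, not arbitrary joins of tuples (binary meet on a distributive lattice preserves finite joins in each coordinate, yet $(\bigvee a_i) \wedge (\bigvee b_i) \neq \bigvee (a_i \wedge b_i)$ in general). Your application survives because the family at hand is up-directed, and coordinatewise complete join preservation does imply preservation of up-directed joins of tuples: iterate coordinate by coordinate and use directedness to find a common upper bound for the $n$ witnesses. Second, the up-directedness you assert ``may be taken'' needs its own small argument: it rests on $\Cl{M}$ being closed under finite joins, i.e.\ $\bigwedge F \vee \bigwedge G = \bigwedge (F \cap G)$ for filters $F, G$, which follows from denseness and compactness (every ideal element above $\bigwedge F \vee \bigwedge G$ lies above some $a \vee b$ with $a \in F$, $b \in G$, and $a \vee b \in F \cap G$). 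Third, your closing sketch of the ``hard part'' --- decomposing each filter element $y \le g^\sigma(w)$ by compactness into pieces dominated by the $g^\sigma(p)$ --- would not work literally: compactness applies to elements of the lattice, not to filter elements, and a filter element below an up-directed join of filter elements need not lie below any member of the family (take, in the finite--cofinite algebra of $\NNO$, the top element below the directed join of the filter elements $\{1,\dots,n\} \cup \{\infty\}$ in the Stone representation); indeed, were such a decomposition available, $f^\sigma$ would be Scott-continuous for \emph{every} monotone $f$ and the proposition would hold without any hypothesis on $f$, which is false. This sketch is not load-bearing, however: the complete-operator theorem you invoke is proved in the cited literature directly from the definitions, independently of the composition result, so there is no circularity, and with it your proof is complete. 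The $\pi$-case dualises exactly as you say.
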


For various order-preserving maps between lattices, their $\sigma$-extension and $\pi$-extension coincide, thus giving a unique lifting. 

\begin{proposition}
Let $f \colon \L \to \K$ be an order-preserving map between lattices. If $f$ is finite join preserving (resp. finite meet preserving), then $f^\sigma = f^\pi$. In this case we denote this unique extension by $f^\delta$. Furthermore, $f^\delta$ is completely join preserving (resp. completely meet preserving). 
\end{proposition}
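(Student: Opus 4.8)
The plan is to prove the statement for finite-join-preserving $f$ and to obtain the finite-meet-preserving case by order-duality, since passing to the opposite lattices interchanges $(\_)^\sigma$ with $(\_)^\pi$, joins with meets, and the filter elements $\Cl{\L}$ with the ideal elements $\Op{\L}$. Throughout I would lean on three features of canonical extensions: denseness in its two forms, so that an inequality $u \le v$ in $\L^\delta$ may be tested either on the filter elements below $u$ or on the ideal elements above $v$; the consequence of compactness that whenever a filter element lies below an ideal element there is an element of the lattice between them; and the fact that $f^\sigma$ and $f^\pi$ both restrict to $f$ on $\L$. Everything rests on one lemma, which I would use twice. \emph{Key Lemma.} If $f$ preserves finite joins and $y \in \Op{\K}$, then $I_y := \{b \in \L \mid f(b) \le y\}$ is an ideal of $\L$ --- downward closure is monotonicity, while closure under binary joins and membership of $0$ is exactly finite-join-preservation --- and every filter element $x \in \Cl{\L}$ with $f^\sigma(x) \le y$ satisfies $x \le \bigvee I_y$. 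The second clause is the crux: $f^\sigma(x) = \bigwedge\{f(a) \mid x \le a \in \L\}$ is a meet of elements of $\K$ lying below the ideal element $y$, so compactness yields $a_1, \dots, a_n \ge x$ with $f(a_1) \wedge \dots \wedge f(a_n) \le y$; then $a^\ast := a_1 \wedge \dots \wedge a_n$ lies in $\L$, satisfies $x \le a^\ast$, and has $f(a^\ast) \le y$, so $a^\ast \in I_y$ and hence $x \le \bigvee I_y$.

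Granting the lemma, I would deduce complete join-preservation as follows. Write $u = \bigvee_i u_i$; monotonicity gives $\bigvee_i f^\sigma(u_i) \le f^\sigma(u)$, so only $f^\sigma(u) \le \bigvee_i f^\sigma(u_i) =: w$ remains. Since $f^\sigma(u) = \bigvee\{f^\sigma(x) \mid x \in \Cl{\L},\ x \le u\}$ and, by denseness, $w = \bigwedge\{y \in \Op{\K} \mid w \le y\}$, it is enough to fix a filter element $x \le u$ and an ideal element $y \ge w$ and prove $f^\sigma(x) \le y$. From $y \ge w$ we get $f^\sigma(u_i) \le y$ for all $i$, hence $f^\sigma(x') \le y$ for every filter element $x' \le u_i$; the Key Lemma gives $x' \le \bigvee I_y$, and taking joins yields $u \le \bigvee I_y$. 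In particular $x \le \bigvee I_y$, so compactness produces $b \in I_y$ with $x \le b$, and therefore $f^\sigma(x) \le f(b) \le y$.

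For $f^\sigma = f^\pi$ I would first note the inequality $f^\sigma \le f^\pi$, valid for any monotone $f$: given a filter element $x \le u$ and an ideal element $y \ge u$, the between-element form of compactness provides $c \in \L$ with $x \le c \le y$, whence $\bigwedge_{a \ge x} f(a) \le f(c) \le \bigvee_{b \le y} f(b)$; joining over $x$ and meeting over $y$ gives $f^\sigma(u) \le f^\pi(u)$. For the reverse inequality I would test on ideal elements of $\K^\delta$: fixing $y' \in \Op{\K}$ with $f^\sigma(u) \le y'$, the Key Lemma yields $u \le \bigvee I_{y'}$, so $y := \bigvee I_{y'}$ is an ideal element above $u$ with $f^\pi(y) = \bigvee f[I_{y'}] \le y'$; thus $f^\pi(u) \le f^\pi(y) \le y'$, and as $y'$ ranges over all such ideal elements this gives $f^\pi(u) \le f^\sigma(u)$.

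I expect the Key Lemma to be the real obstacle, and within it the coordination of compactness with finite-join-preservation: compactness only rounds the filter element $f^\sigma(x)$ up to a finite meet of $f$-values below $y$, and it is precisely finite-join-preservation that makes the associated preimages cohere into the single ideal $I_y$ on which the rest of the proof depends. A further subtlety to keep in mind is that $\L$ and $\K$ are arbitrary, not necessarily distributive, lattices, so $\K^\delta$ need not be completely distributive; every step must therefore be routed through denseness and compactness, with no appeal to infinite distributive laws.
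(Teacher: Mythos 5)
Your proof is correct. Note that the paper itself offers no proof of this proposition: it is recalled in Section~2 as standard background on canonical extensions (with references to \cite{GeHa, GeJo94, GePr08}), so there is no in-paper argument to compare against; measured against the standard proofs in that literature, your route is essentially the classical one. Your Key Lemma is exactly the right mechanism: finite-join-preservation (including the empty join, giving $f(0)=0$) makes $I_y=\{b\in \L\mid f(b)\le y\}$ an ideal for each ideal element $y$, and compactness, applied to the family $\{f(a)\mid x\le a\in \L\}$ and the ideal $\{c\in \K\mid c\le y\}$, pulls any filter element $x$ with $f^\sigma(x)\le y$ below $\bigvee I_y$. The remaining steps --- testing complete join-preservation against filter elements below and ideal elements above, the inequality $f^\sigma\le f^\pi$ for arbitrary monotone maps via the between-element consequence of compactness, and the reverse inequality tested on ideal elements of $\K^\delta$ --- are all sound, and the duality reduction for the meet-preserving case is legitimate because both denseness and compactness are self-dual conditions, so $(\L^{\op})^\delta\cong(\L^\delta)^{\op}$ with the $\sigma$- and $\pi$-extensions interchanged. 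One step is compressed rather than wrong: in the final paragraph you assert $f^\pi\bigl(\bigvee I_{y'}\bigr)=\bigvee f[I_{y'}]$, whereas by definition $f^\pi(y)=\bigvee\{f(b)\mid y\ge b\in \L\}$; to identify the two index sets you need the one-line observation that $b\in \L$ with $b\le\bigvee I_{y'}$ already forces $b\in I_{y'}$, by compactness together with $I_{y'}$ being closed under finite joins --- the same move you make explicitly in the complete-join-preservation argument, so this is a matter of spelling out a step, not a gap. Your closing remark is also apt: since $\L$ and $\K$ are arbitrary bounded lattices, $\K^\delta$ need not satisfy any infinite distributive law, and your argument correctly routes every inequality through denseness and compactness alone.
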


In particular, for a lattice homomorphism $f \colon \L \to \K$, the map $f^\delta \colon \L^\delta \to \K^\delta$ is a complete lattice homomorphism. The assignment $\L \mapsto \L^\delta$ extends to a functor $\cat{Lat} \to \cat{CLat}$, from the category of lattices to the category of complete lattices, which preserves finite products. We rely on this property when extending (non-unary) operations on a lattice to its canonical extension.
  
We now restrict our attention to distributive lattices. The category of distributive lattices with lattice homomorphisms is denoted by $\DL$. For a distributive lattice $\L$, its canonical extension $\L^\delta$ may be concretely described as the downset lattice of the poset \mbox{$(PrFl(\L), \supseteq)$} of prime filters of $\L$ ordered by reverse inclusion. In this setting, the isomorphism \eqref{isoflt} restricts to an isomorphism between the prime filters $PrFl(L)$ of $L$ and the completely join-irreducible elements $J^\infty(L^\delta)$ of $\L^\delta$. The lattice $\L^\delta$ is completely distributive and algebraic. We write $\DL^+$ for the category of completely distributive algebraic lattices with complete homomorphisms.

\begin{theorem}
\label{thm:extlat}
The assignment $\L \mapsto \L^\delta$ extends to a functor $\CE \colon \DL \to \DL^+$ which is left adjoint to the inclusion functor $i \colon \DL^+ \to \DL$.  
\end{theorem}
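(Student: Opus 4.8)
The plan is to establish the two halves of the statement in turn: first that $\CE$ is a well-defined functor landing in $\DL^+$, and then that the family of embeddings $e_\L \colon \L \hookrightarrow i(\L^\delta)$ forms the unit of an adjunction $\CE \dashv i$, by verifying the associated universal property. Functoriality comes almost for free from the results already stated. A morphism of $\DL$ is a lattice homomorphism $f \colon \L \to \K$; since it preserves both finite meets and finite joins, the preceding proposition gives $f^\sigma = f^\pi =: f^\delta$ and tells us $f^\delta \colon \L^\delta \to \K^\delta$ is a complete lattice homomorphism. As $\L^\delta$ and $\K^\delta$ are completely distributive and algebraic, $f^\delta$ is a morphism of $\DL^+$, so I set $\CE(f) := f^\delta$. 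Preservation of identities is immediate from the defining formula, and preservation of composition is exactly Proposition~\ref{prop:prescomp} in the unary case $n = 1$: a homomorphism preserves finite joins in its single coordinate, so $(g \after f)^\delta = (g \after f)^\sigma = g^\sigma \after f^\sigma = g^\delta \after f^\delta$.

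For the adjunction it suffices to show that each $e_\L$ is universal from $\L$ to $i$: for every $C \in \DL^+$ and every lattice homomorphism $h \colon \L \to i(C)$ there is a unique complete homomorphism $\bar h \colon \L^\delta \to C$ with $i(\bar h) \after e_\L = h$. I would define $\bar h$ by the pattern of the $\sigma$-extension, but evaluated in $C$ (legitimate since $C$ is complete): put $\bar h(x) = \bigwedge_C\{h(a) \mid a \in \L,\ x \le a\}$ for a filter element $x \in \Cl{L}$, and $\bar h(u) = \bigvee_C\{\bar h(x) \mid x \in \Cl{L},\ x \le u\}$ for general $u \in \L^\delta$. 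Since every $a \in \L$ is simultaneously a filter and an ideal element, $\bar h$ restricts to $h$ on $\L$, so the triangle commutes; and uniqueness is the easy direction, because density of $e_\L$ forces any complete homomorphism extending $h$ to agree with $\bar h$ on the joins of meets that exhaust $\L^\delta$.

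The substance is checking that $\bar h$ is a complete homomorphism. Preservation of arbitrary joins is clean: in the completely distributive lattice $\L^\delta$ the completely join-irreducible elements are exactly the principal downsets of prime filters, they are filter elements, and they join-generate, so $\bar h(u)$ equals the join of the $\bar h(p)$ over completely join-irreducible $p \le u$; since such $p$ are join-prime, $p \le \bigvee_i u_i$ forces $p \le u_i$ for some $i$, and the formula distributes over arbitrary joins. Preservation of arbitrary meets is the main obstacle, and it is here that compactness of $e_\L$ together with the completely distributive, algebraic structure of $C$ must be combined. The idea is to show that the above formula agrees with its dual $\pi$-style formula $\bar h(u) = \bigwedge_C\{\bigvee_C\{h(a) \mid a \le y\} \mid y \in \Op{L},\ y \ge u\}$. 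One inequality is a direct compactness argument: if $x \le u \le y$ with $x \in \Cl{L}$ and $y \in \Op{L}$, then $\bigwedge\{a \mid x \le a\} \le \bigvee\{b \mid b \le y\}$ in $\L^\delta$, so compactness yields finite $F', I' \subseteq \L$ with $\bigwedge F' \le \bigvee I'$ in $\L$, and applying the homomorphism $h$ gives $\bigwedge_C h[F'] \le \bigvee_C h[I']$; taking the join over $x$ and the meet over $y$ shows the $\sigma$-formula is below the $\pi$-formula. The reverse inequality is the delicate point: it requires interchanging the outer meet with the inner joins, which is precisely where complete distributivity of $C$ is invoked, while algebraicity of $C$ guarantees that meets are detected on enough join-prime elements to run the argument. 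Once the two formulas coincide, the $\pi$-formula exhibits $\bar h$ as preserving arbitrary meets, so $\bar h$ is a complete homomorphism.

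Finally, naturality of the bijection $h \leftrightarrow \bar h$ in both variables is a routine diagram chase: the construction of $\bar h$ and the functorial action $f \mapsto f^\delta$ are given by the same join/meet formulas, so compatibility with precomposition in $\L$ and postcomposition in $C$ follows from functoriality and the definition of $\bar h$. This yields $\CE \dashv i$ with unit $e$, as required.
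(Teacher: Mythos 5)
The paper itself records Theorem~\ref{thm:extlat} as a known fact recalled from the canonical-extension literature and proves nothing in the compiled text; the only proof present in the source is a commented-out sketch taking the dual route, namely constructing for each $C \in \DL^+$ a complete-homomorphism retraction $r_C \colon C^\delta \to C$ (the counit), after which $\overline{h} = r_C \after h^\delta$ does all the work. Your organisation --- verifying directly that $e_\L$ is a universal arrow from $\L$ to $i$, with $\overline{h}$ given by the $\sigma$-extension formulas evaluated in $C$ --- is the other standard arrangement of the same material, and several of your steps are correct as written: functoriality via Proposition~\ref{prop:prescomp}, the restriction of $\overline{h}$ to $\L$, uniqueness from density, naturality, and the compactness argument giving the inequality ``$\sigma$-formula $\le$ $\pi$-formula''.

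There is, however, a genuine gap at the crux, and it occurs twice. In the join-preservation step you claim that since the completely join-irreducible elements join-generate $\L^\delta$, ``$\bar h(u)$ equals the join of the $\bar h(p)$ over completely join-irreducible $p \le u$''. This does not follow: $\bar h(u)$ is by definition the join of $\bar h(x)$ over \emph{all} filter elements $x \le u$, and the needed inequality for a filter element $x$, namely
$$
\bigwedge\nolimits_C h[F_x] \;\le\; \bigvee\nolimits_C \bigl\{\, \bigwedge\nolimits_C h[\rho] \;\big|\; \rho \in PrFl(\L),\ \rho \supseteq F_x \,\bigr\},
$$
cannot be deduced from $x = \bigvee\{p \in J^\infty(\L^\delta) \mid p \le x\}$, because passing that join through $\bar h$ presupposes exactly the join-continuity you are trying to establish --- the argument as written is circular. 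The same missing content reappears in the reverse inequality ``$\pi \le \sigma$'', which you only gesture at (``interchanging the outer meet with the inner joins''). Both gaps are closed by one lemma you never state: since $C$ is completely distributive and algebraic, it is join-generated by its completely join-\emph{prime} elements, and for each such $q$ the set $G_q = \{a \in \L \mid q \le h(a)\}$ is a prime filter of $\L$ (the filter properties come from $h$ preserving finite meets and bounds, primeness from join-primeness of $q$). Then $q \le \bigwedge_C h[F_x]$ forces $G_q \supseteq F_x$ and $q \le \bigwedge_C h[G_q]$, which proves the display; and for ``$\pi \le \sigma$'', any completely join-prime $q$ below $\bigwedge\{\bar h^\pi(y) \mid u \le y \in \Op{L}\}$ yields, via compactness and the two density conditions, that $x_{G_q} = \bigwedge G_q$ is a filter element with $x_{G_q} \le u$ and $q \le \bar h^\sigma(x_{G_q})$. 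Without the construction $q \mapsto G_q$ (or an equivalent representation of $C$ as the downset lattice of a poset), neither complete-homomorphism check goes through; with it, your plan is completable.
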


Occasionally, we also write $(\_)^\delta$ for the composition $\DL \xrightarrow{(\_)^\delta} \DL^+ \xrightarrow{i} \DL$.
Theorem~\ref{thm:extlat} implies that, for $\L \in \DL$ and $K \in \DL^+$, every lattice homomorphism $f \colon L \to K$ extends uniquely to a complete lattice homomorphism $\overline{f} \colon L^\delta \to K$. This yields an isomorphism
\begin{equation}
\label{eq:natdl}
\overline{(\_)} \colon Hom_{\DL}(L, K) \to Hom_{\DL^+}(L^\delta, K),
\end{equation}
natural in $L$ and $K$. 

In this paper we consider join-preserving maps between distributive lattices. We end this section by describing how these interact with the natural isomorphism \eqref{eq:natdl}. The following result was already known to Mai Gehrke and John Harding, but never published.

\begin{proposition}
\label{prop:comjpm}
Consider the following commutative diagram
$$
\xymatrix{
L_1 \ar[r]^-{f} \ar[d]_-{h_1} & L_2 \ar[d]^-{h_2}\\
K_1 \ar[r]_-{g} & K_2
}
$$
where $L_1, L_2 \in \DL$, $K_1, K_2 \in \DL^+$, $h_1, h_2$ are lattice homomorphisms, $f$ is a finite join-preserving map and $g$ is a completely join-preserving map. The following are equivalent
\begin{enumerate}
\item for all prime filters $\rho$ in $L_1$,
$$
g(\bigwedge \{h_1(a)\,|\,a\in\rho\}) = \bigwedge\{g(h_1(a))\,|\,a \in \rho\}
$$
\item $g \after \overline{h_1} = \overline{h_2} \after f^\delta$. 
\end{enumerate}
\end{proposition}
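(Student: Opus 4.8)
The plan is to show that both composites appearing in statement~(2) are \emph{completely join-preserving} maps $L_1^\delta \to K_2$, and then to reduce the desired equality to agreement on the completely join-irreducible elements of $L_1^\delta$, which correspond under the isomorphism \eqref{isoflt} to the prime filters of $L_1$. Indeed, $\overline{h_1}$ and $\overline{h_2}$ are complete lattice homomorphisms by Theorem~\ref{thm:extlat}, hence preserve arbitrary joins and meets; $g$ is completely join-preserving by hypothesis; and since $f$ is finite join-preserving we have $f^\delta = f^\sigma$, which is completely join-preserving. Thus $g \after \overline{h_1}$ and $\overline{h_2} \after f^\delta$ are both completely join-preserving. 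Because $L_1^\delta$ is the downset lattice of $(PrFl(L_1), \supseteq)$, every element is the join of the completely join-irreducibles below it, and these are exactly the filter elements $x_\rho$ with $\rho$ a prime filter; so the two composites are equal if and only if they agree on every such $x_\rho$.

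Next I would evaluate both sides on a fixed filter element $x_\rho = \bigwedge\{a \in L_1 \mid a \in \rho\}$. Since $\overline{h_1}$ preserves arbitrary meets and restricts to $h_1$ on $L_1$, we get $\overline{h_1}(x_\rho) = \bigwedge\{h_1(a) \mid a \in \rho\}$, so the left-hand composite sends $x_\rho$ to $g(\bigwedge\{h_1(a) \mid a \in \rho\})$. For the right-hand composite, the defining formula for $f^\sigma$ on filter elements gives $f^\delta(x_\rho) = \bigwedge\{f(a) \mid x_\rho \le a \in L_1\}$, and by the correspondence \eqref{isoflt} the index set $\{a \in L_1 \mid x_\rho \le a\}$ is precisely $\rho$; hence $f^\delta(x_\rho) = \bigwedge\{f(a) \mid a \in \rho\}$, a filter element of $L_2^\delta$. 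Applying the complete homomorphism $\overline{h_2}$ and using commutativity $h_2 \after f = g \after h_1$ of the square on elements of $L_1$, the right-hand composite sends $x_\rho$ to $\bigwedge\{h_2(f(a)) \mid a \in \rho\} = \bigwedge\{g(h_1(a)) \mid a \in \rho\}$.

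Comparing the two results, the composites agree at $x_\rho$ exactly when $g(\bigwedge\{h_1(a) \mid a \in \rho\}) = \bigwedge\{g(h_1(a)) \mid a \in \rho\}$, which is the identity in statement~(1) for the prime filter $\rho$. This delivers both implications at once: assuming (2) and evaluating at each $x_\rho$ yields (1); conversely, assuming (1) gives agreement of the two completely join-preserving composites on every completely join-irreducible element, whence, by the reduction of the first paragraph, the global equality (2).

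The routine but load-bearing step is the evaluation of the right-hand composite on $x_\rho$: it relies on identifying $f^\delta$ with $f^\sigma$, on reading off $\{a \in L_1 \mid x_\rho \le a\} = \rho$ from \eqref{isoflt}, and on $\overline{h_2}$ preserving the (possibly infinite) meet defining $f^\delta(x_\rho)$. The genuine subtlety — and the point where the distributive structure is essential — is the reduction to completely join-irreducibles: agreement on the image of $L_1$ alone would not suffice, since $L_1$ is not join-dense in $L_1^\delta$. It is therefore crucial that the prime-filter filter elements $x_\rho$ are exactly the completely join-irreducible elements $J^\infty(L_1^\delta)$ and that they join-generate $L_1^\delta$.
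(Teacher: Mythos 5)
Your proof is correct and takes essentially the same route as the paper's: reduce equality of the two completely join-preserving composites to agreement on the completely join-irreducible elements of $L_1^\delta$, then evaluate both sides on each filter element $x_\rho$ using $f^\delta = f^\sigma$, the fact that $\overline{h_1}$, $\overline{h_2}$ are complete homomorphisms extending $h_1$, $h_2$, and commutativity of the square. You merely spell out a few verifications the paper leaves implicit (e.g.\ that $\{a \in L_1 \mid x_\rho \le a\} = \rho$ and why join-density of $J^\infty(L_1^\delta)$ is needed), which is fine.
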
  

\begin{proof}
As $g$, $\overline{h_1}$, $\overline{h_2}$ and $f^\delta$ are all completely join preserving and $L_1^\delta$ is join-generated by its completely join-irreducible elements, the second condition is equivalent to, for all $x \in J^\infty(L^\delta)$, 
$$g(\overline{h_1}(x)) = \overline{h_2}(f^\delta(x)).$$
Let $x \in J^\infty(L_1^\delta)$ and let $\rho_x = \{a \in L_1 \,|\, x \le a\}$ be the corresponding prime filter in $L_1$. Then
$$
\begin{array}{rcll}
g(\overline{h_1}(x))
&=&
g(\overline{h_1}(\bigwedge\{a\,|\,a \in \rho_x\}))\\
&=&
g(\bigwedge\{h_1(a)\,|\,a \in \rho_x\})&\text{($\overline{h_1}$ is a complete homomorphism extending $h_1$).}\\
\\
\overline{h_2}(f^\delta(x))
&=&
\overline{h_2}(\bigwedge\{f(a)\,|\,a \in \rho_x\})&\text{(definition of $f^\delta = f^\sigma$)}\\
&=&
\bigwedge\{h_2(f(a))\,|\,a \in \rho_x\}\\
&=&
\bigwedge\{g(h_1(a))\,|\,a \in \rho_x\}.
\end{array}
$$
Hence, $g(\overline{h_1}(x)) = \overline{h_2}(f^\delta(x))$ iff $g(\bigwedge \{h_1(a)\,|\,a\in\rho_x\}) = \bigwedge\{g(h_1(a))\,|\,a \in \rho_x\}$. The claim now follows using the one-to-one correspondence between completely join-irreducible elements of $L_1^\delta$ and prime filters of $L_1$.\QED
\end{proof}

\section{Canonical extension for coherent categories}
\label{sec:extcoh}
The categorical analogue of a distributive lattice is a \emph{coherent category}, {\it i.e.}, a category $\C$ which has finite limits, stable images and the property that, for all $A \in \cat{C}$, $Sub_\C(A)$ has stable finite joins. We write  $\Coh$ for the category of all (small) coherent categories, with structure preserving functors. Remark that a distributive lattice (viewed as a category) is a coherent category. As we describe below, there is a correspondence between coherent categories and so-called coherent hyperdoctrines. We exploit this correspondence to define a notion of canonical extension for coherent categories. 

Coherent categories provide semantics for \emph{coherent logic}, the fragment of first order logic with only the connectives $\wedge$, $\vee$, $\top$, $\bot$ and $\exists$.  In a coherent category $\C$, for each $A \in \C$, $Sub_\C(A)$ is a distributive lattice. This enables one to interpret the propositional connectives. As $\C$ has images, for each $f \colon A \to B$, the pullback functor $f^* \colon Sub_\C(B) \to Sub_\C(A)$ has a left adjoint $\exists_f$, which enables the interpretation of the existential quantifier  (see {\it e.g.} D1 in \cite{Joh}). 

For a coherent category $\C$, the functor $Sub_\C \colon \C^{\op} \to \DL$, which sends an object of $\cat{C}$ to the distributive lattice of its subobjects, is a {\it coherent hyperdoctrine} from which we may recover $\C$ (up to equivalence), as is made precise in Proposition~\ref{prop:adjpdlcoh}.

\begin{definition}
\label{def:cohhyp}
Let $\cat{B}$ be a category with finite limits. A \emph{coherent hyperdoctrine} over $\cat{B}$ is a functor $P \colon \cat{B}^{\op} \to \DL$ such that,  for every morphism $A \xrightarrow{\alpha} B$ in $\cat{B}$, $P(\alpha) \colon P(B) \to P(A)$ has a left adjoint $\exists_\alpha^P$ satisfying
\begin{enumerate}
\item Frobenius reciprocity, {\it i.e.}, for all $a \in P(A)$, $b \in P(B)$,
$$
\exists_\alpha^P(a \wedge P(\alpha)(b)) = \exists_\alpha^P(a) \wedge b
$$
\item Beck-Chevalley condition, {\it i.e.}, for every pullback square 
$$
\xymatrix{
Q \ar[r]^-{\alpha'} \ar[d]_-{\beta'} & B \ar[d]^-{\beta}\\
A \ar[r]_-{\alpha} & C
}
$$
in $\cat{B}$, $P(\beta) \after \exists_\alpha^P = \exists_{\alpha'}^P \after P(\beta')$.
\end{enumerate}
We often omit the superscript $P$ in $\exists_\alpha^P$.
A \emph{coherent hyperdoctrine morphism} from $P_1 \colon \cat{B}_1^{\op} \to \cat{DL}$ to $P_2 \colon \cat{B}_2^{\op} \to \cat{DL}$ is a pair $(K, \tau)$, where $K \colon \cat{B}_1 \to \cat{B}_2$ is a finite limit preserving functor and $\tau \colon P_1 \to P_2\after K$ is a natural transformation satisfying, for all $A \xrightarrow{\alpha} B$ in $\B_1$, 
$$
\exists_{K(\alpha)}^{P_2} \after \tau_A = \tau_B \after \exists_\alpha^{P_1}.
$$
We write $\pDL$ for the category of coherent hyperdoctrines.
\end{definition}

A theory $\Th$ in coherent logic naturally gives rise to a coherent hyperdoctrine $\F_\Th \colon \B^{\op} \to \DL$. The objects of $\B$ are pairs $\langle \vec{x}, \vec{s} = \vec{t} \rangle$, where $\vec{x}$ is a finite sequence of variables (context) and $\vec{s}$ and $\vec{t}$ are finite sequences of terms (of the same length) in context $\vec{x}$. A morphism $\tuple{\vec{x}}{\vec{s} = \vec{t}\,} \to \tuple{\vec{y}}{\vec{u} = \vec{v}}$ is an equivalence class $[\vec{w}]$, where $\vec{w}$ is a finite sequence of terms in context $\vec{x}$ that is of the same length as $\vec{y}$ such that the following sequent is derivable in $\Th$
$$\vec{s} = \vec{t} \,\,\vdash_{\vec{x}}\,\, \vec{u}\,[\vec{w}/\vec{y}] = \vec{v}\,[\vec{w}/\vec{y}],
$$
where, for example, $\vec{s} = \vec{t}$ denotes the conjunction $s_0 = t_0 \wedge \ldots \wedge s_{n-1} = t_{n-1}$ (with $n = length(s)$) and $\vec{u}\,[\vec{w}/\vec{y}]$ is obtained by substituting $w_i$ for $y_i$ in $\vec{u}$, for each $i < length(y)$.
Two such sequences $\vec{w}$ and $\vec{z}$ are equivalent iff the following sequent is derivable in $\Th$
$$
\vec{s} = \vec{t} \,\,\vdash_{\vec{x}}\,\, \vec{w} = \vec{z}.
$$
We say a coherent formula $\psi$ is a \emph{formula in context $\vec{x}$} iff all free variables of $\psi$ are among $\vec{x}$. For an object $\langle \vec{x}, \vec{s} = \vec{t} \rangle$, the underlying set of $\F_\Th(\langle \vec{x}, \vec{s} = \vec{t} \rangle)$ is the collection of coherent formulae in context $\vec{x}$, modulo the equivalence relation $\sim$, where, for coherent formulas $\phi, \psi$ in context $\vec{x}$,
$$
\phi \sim \psi \quad\Leftrightarrow\quad \phi \wedge \vec{s} = \vec{t} \,\,\vdash_{\vec{x}}\,\, \psi \,\,\,\text{and}\,\,\,\psi \wedge \vec{s} = \vec{t} \,\,\vdash_{\vec{x}}\,\, \phi \,\,\,\,\text{are derivable in $\Th$}.
$$
The derivation rules of coherent logic ensure that $\F_\Th(\langle \vec{x}, \vec{s} = \vec{t} \rangle)$, ordered by derivability, is a distributive lattice. For a morphism $\alpha$ in $\B$, $\F_\Th(\alpha)$ is given by substitution of terms in formulae and its adjoints may be described using the existential quantification and equality of the logic. The Beck-Chevalley condition corresponds to the fact that quantification and substitution interact appropriately and the validity of the Frobenius axiom is ensured by the derivation rules for coherent logic.

As stated above, for a coherent hyperdoctrine $\C$, the subobject functor $Sub_\C \colon \C^\op \to \DL$ is a coherent hyperdoctrine. In particular, the coherent category $\Set$ gives rise to a coherent hyperdoctrine $Sub_\Set$, which maps a set $A$ to its powerset $\mathcal{P}(A)$ and a function $A \xrightarrow{f} B$ to the inverse image function $\mathcal{P}(B) \xrightarrow{f^{-1}} \mathcal{P}(A)$. Note that, for a theory $\Th$ in coherent logic, there is a one-to-one correspondence between coherent hyperdoctrine morphisms $\F_\Th \to Sub_\Set$ and models of $\Th$ (in $\Sets$). For more background on (coherent) hyperdoctrines the reader is referred to \cite{Law}.

The category of coherent hyperdoctrines and the category of (small) coherent categories are related via an adjunction. More precisely, there is quasi 2-adjunction $\mathcal{A} \colon \pDL \leftrightarrows \Coh \colon \mathcal{S}$. To sketch this adjunction, we first describe the 2-categorical structure in $\Coh$ and in $\pDL$. The 2-cells in $\Coh$ are the natural transformations. To describe the 2-cells in $\pDL$, let $(K, \tau), (K', \tau') \colon P \to P'$ be morphisms in $\pDL$, where $P$ (resp. $P'$) is a coherent hyperdoctrine over $\B$ (resp. $\B'$). A 2-cell $(K, \tau) \to (K', \tau')$ is a natural transformation $\sigma \colon K \to K'$ satisfying, for all $A \in \B$ and $a \in P(A)$, $\tau_A(a) \le P'(\sigma_A)(\tau'_A(a))$. In the remainder we leave the 2-categorical details to the reader interested in those.

As stated above, for a coherent category $\cat{C}$, the functor $Sub_\cat{C} \colon \cat{C}^{\op} \to \DL$ is a coherent hyperdoctrine. This assignment naturally extends to a 2-functor $\S \colon \Coh \to \pDL$, as follows. For a coherent functor $F \colon \C \to \D$, $\S(F) = (F, \tau^F) \colon \S(\C) \to \S(\D)$, where, for $A \in \C$, $\tau_A^F$ is the restriction of $F$ to a map $F_A \colon Sub_{\C}(A) \to Sub_{\D}(FA)$. To ease the notation, we usually write $\S_\C$ for $\S(\C)$, and similarly for morphisms. 

Conversely, for a coherent hyperdoctrine $P$ over $\cat{B}$, we define a coherent category $\mathcal{A}(P)$ whose objects are pairs $(A, a)$, where $A \in \cat{B}$ and $a \in P(A)$. Intuitively, we think of the elements of $P(A)$ as `predicates on $A$' and $(A,a)$ represents $\{x \in A\,|\,a(x)\}$.
A morphism $(A,a) \to (B, b)$ is an element $f \in P(A \times B)$ which is, in the internal language of $P$, a functional relation $\{x\in A\,|\,a(x)\} \to \{y\in B\,|\,b(y)\}$, {\it i.e.}, $f$ is an element of $P(A \times B)$ satisfying
\begin{enumerate}
\item $x:A \,|\,\,\,a(x) \,\vdash\, \exists y:B.\, f(x,y)$;
\item $x:A, y:B \,|\,\,\, f(x,y) \,\vdash\, a(x) \wedge b(y)$;
\item $y,y':B \,|\,\,\,\exists x:A.\,f(x,y) \wedge f(x,y') \,\vdash\, y = y'$,
\end{enumerate} 
where we use the notation from \cite{Jac}. 

\begin{samepage}
Concretely, this comes down to the element $f$ satisfying
\begin{enumerate}
\item $a \le \exists_{\pi_1}(f)$, where $\pi_1 \colon A\times B \to A$;
\item$ f \le \pi_1^*(a) \wedge \pi_2^*(b)$, where $\pi_1 \colon A\times B \to A$ and $\pi_2 \colon A\times B \to B$;
\item $\exists_\pi(\langle \pi_1, \pi_2\rangle^*(f) \wedge \langle \pi_1, \pi_3\rangle^*(f)) \le \exists_{\Delta_B}(\top)$, \\
where $\langle \pi_1, \pi_2\rangle, \langle \pi_1, \pi_3\rangle \colon A \times B \times B \to A \times B$, $\pi \colon A \times B \times B \to B \times B$, $\Delta_B = \langle \id, \id \rangle \colon B \to B \times B$.
\end{enumerate} 
\end{samepage}
For a morphism $(K, \tau) \colon P_1 \to P_2$ of coherent hyperdoctrines over $\B_1$ and $\B_2$, respectively, $\A(K, \tau)$ is given by
$$
\begin{array}{rccl}
\A(K, \tau)\colon & \A(P_1) &\to& \A(P_2) \\
&\xymatrix{(A,a) \ar[d]^-{f}\\(B,b)} &\mapsto & \xymatrix{(K(A), \tau_A(a)) \ar[d]^{\tau_{A\times B}(f) \in P(K(A\times B)) \cong P(K(A) \times K(B))}\\(K(B), \tau_B(b))}.
\end{array}
$$
This yields a 2-functor $\A \colon \pDL \to \Coh$. The 2-functors $\A$ and $\S$ form an adjunction, but the diagram expressing the naturality of the counit commutes \emph{up to isomorphism}. In the literature, various names are used for such an adjunction. In \cite{Gray74} it is called a quasi 2-categorical adjunction, while in \cite{Bor1} it is just called a 2-categorical adjunction. The following proposition is proven, in a slightly different form, in \cite{Pi83}.

\begin{proposition}
\label{prop:adjpdlcoh}
The 2-functors $\A \colon \pDL \leftrightarrows \Coh \colon \S$ form a (quasi) 2-categorical adjunction, $\A \dashv \S$, and, for each $\C \in \Coh$, the counit  at $\C$, $\epsilon_\C \colon \A(\S(\C)) \to \C$, is an equivalence in the 2-category $\Coh$ (as in Definition 7.1.2 in \cite{Bor1}).
\end{proposition}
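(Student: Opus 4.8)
The plan is to produce explicit unit and counit for the putative adjunction $\A \dashv \S$ and to verify the triangle identities, with the bulk of the work going into the analysis of the counit. For a coherent category $\C$, recall that $\A(\S(\C))$ has as objects the pairs $(A,a)$ with $A \in \C$ and $a \in Sub_\C(A)$, and as morphisms the functional relations. I would define the counit $\epsilon_\C \colon \A(\S(\C)) \to \C$ by sending $(A,a)$ to the domain $\mathrm{dom}(a)$ of the subobject $a \hookrightarrow A$ (an honest object of $\C$, since $\C$ has finite limits) and by sending a functional relation $f \in Sub_\C(A \times B)$ from $(A,a)$ to $(B,b)$ to the morphism of $\C$ of which it is the graph. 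Dually I would take the unit $\eta_P = (K, \tau) \colon P \to \S(\A(P))$ with $K(A) = (A, \top_A)$, with $K$ sending $\alpha \colon A \to B$ to the subobject of $P(A \times B)$ representing its graph, and with $\tau_A \colon P(A) \to Sub_{\A(P)}(A, \top_A)$ sending $a$ to the monomorphism $(A,a) \hookrightarrow (A, \top_A)$.

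The heart of the matter is to show that $\epsilon_\C$ is an equivalence in $\Coh$. Essential surjectivity is immediate, since every $C \in \C$ arises as $\epsilon_\C(C, \top_C)$. For fullness and faithfulness I would invoke the standard fact that in a regular, hence in a coherent, category every functional relation is the graph of a unique morphism. Concretely, writing $R \hookrightarrow A \times B$ for the subobject underlying $f$, the containment $f \le \pi_1^*(a) \wedge \pi_2^*(b)$ lets us factor the first projection through $\mathrm{dom}(a)$; combining this with totality, $a \le \exists_{\pi_1}(f)$, shows the image of the resulting map is exactly $a$, so that $R \to \mathrm{dom}(a)$ is a cover, while single-valuedness shows it is a monomorphism. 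In a regular category a morphism that is simultaneously a cover and a monomorphism is an isomorphism, so composing its inverse with the second projection yields the morphism $\mathrm{dom}(a) \to \mathrm{dom}(b)$ represented by $f$, and this assignment is inverse to $g \mapsto \mathrm{graph}(g)$. Checking that the resulting bijection between $Hom_{\A(\S(\C))}((A,a),(B,b))$ and $Hom_\C(\mathrm{dom}(a), \mathrm{dom}(b))$ respects composition unwinds to a computation of relational composition in the lattices $Sub_\C(- \times -)$; granting this, $\epsilon_\C$ is a fully faithful, essentially surjective coherent functor, and hence an equivalence in $\Coh$, its quasi-inverse being automatically structure-preserving.

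It then remains to assemble these data into a quasi 2-adjunction. I would first confirm that $\eta_P$ is genuinely a hyperdoctrine morphism: naturality of $\tau$ and its commutation with the $\exists$-operators follow from the fact that quantification in $\A(P)$ is computed by images, matching $\exists_\alpha^P$. Next I would check the 2-naturality of $\eta$ and $\epsilon$ appropriate to $\pDL$ and $\Coh$ and the two triangle identities. The key point — and the reason the adjunction is quasi rather than strict — is that the objects of $\A(\S(\C))$ are subobjects rather than literal objects of $\C$, so $\A\S$ reconstructs $\C$ only up to a choice of representatives and of canonical limits; consequently the counit-naturality square, and with it the triangle identities, close only up to a coherent natural isomorphism, and these isomorphisms must be recorded and checked against the coherence requirements of Definition 7.1.2 in \cite{Bor1}. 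The obstacle I anticipate is thus bookkeeping rather than conceptual: the substantive input, that functional relations are graphs, is a one-line consequence of regularity once totality and single-valuedness are read off as ``cover'' and ``monomorphism'', whereas what demands care is tracking the canonical isomorphisms $P(K(A \times B)) \cong P(K(A) \times K(B))$ built into $\A$ on morphisms together with the coherence isomorphisms witnessing naturality and the triangle laws.
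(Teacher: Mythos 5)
The paper itself contains no proof of this proposition: it is stated with the remark that it ``is proven, in a slightly different form, in \cite{Pi83}'', so there is no internal argument to compare yours against, and the right benchmark is the standard Pitts-style proof --- which is essentially what you have reconstructed. Your central step is correct and correctly executed: reading the three conditions on $f \in Sub_\C(A \times B)$ as totality, containment and single-valuedness, the containment $f \le \pi_1^*(a) \wedge \pi_2^*(b)$ factors the first projection of $R \hookrightarrow A \times B$ through $\mathrm{dom}(a)$, totality $a \le \exists_{\pi_1}(f)$ makes $R \to \mathrm{dom}(a)$ a cover, and the third condition makes it monic (the subobject $\langle \pi_1,\pi_2\rangle^*(f) \wedge \langle \pi_1,\pi_3\rangle^*(f)$ of $A \times B \times B$ is the kernel pair of $R \to A$, and forcing its image into the diagonal of $B$ equalises the two projections $R \times_A R \rightrightarrows R$), so regularity gives the inverse and hence full faithfulness of $\epsilon_\C$; with essential surjectivity via $(C,\top_C)$ this yields the equivalence, and coherence of the quasi-inverse is indeed automatic, since finite limits, images and joins of subobjects are preserved by any equivalence of categories. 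Two calibration points are worth recording. First, the paper locates the failure of strictness \emph{only} in the counit: ``the diagram expressing the naturality of the counit commutes up to isomorphism''; with your canonical choices ($K(A) = (A,\top_A)$, $\epsilon_\C(A,a) = \mathrm{dom}(a)$ with the identity mono representing $\top_A$) both triangle identities can be arranged to hold on the nose, and the irreducibly up-to-iso datum is $F(\mathrm{dom}(a)) \cong \mathrm{dom}(Fa)$ for a coherent functor $F$, so your suggestion that the triangle laws themselves only close up to isomorphism is more pessimistic than necessary. Second, your definition of the unit silently uses the identification $Sub_{\A(P)}(A,\top_A) \cong P(A)$ (needed for $\tau_A(a)$, the ``subobject $(A,a) \hookrightarrow (A,\top_A)$'', to define a lattice map into the right codomain and for $\tau$ to commute with the $\exists$'s); this is the same fact the paper invokes later (``Recall that $Sub_{\A(\S_\C^\delta)}(A,\top) \cong \S_\C^\delta(A)$'') and deserves an explicit proof, since it is where the three functional-relation axioms get used a second time, now to classify monos into $(A,\top_A)$. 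The remaining items you defer --- functoriality of $\epsilon_\C$ via relational composition of graphs, coherence of $\epsilon_\C$, and the $\pDL$-morphism axioms for $\eta_P$ --- are genuinely routine, so I see no gap, only bookkeeping you have correctly identified as such.
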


As we will show in the next proposition, coherent hyperdoctrines are closed under canonical extension, in the sense that, for a coherent hyperdoctrine $P \colon \cat{B}^{\op} \to \DL$, $\CE \after P \colon \B^{\op} \to \DL$ is again a coherent hyperdoctrine. 
In combination with the 2-adjunction of Proposition~\ref{prop:adjpdlcoh} this yields a natural notion of canonical extension for coherent categories. 

\begin{proposition}
Let $P \colon \cat{B}^{\op} \to \DL$ be a coherent hyperdoctrine. The functor 
$
P^\delta = \CE \after P \colon \B^{\op} \to \DL
$ 
is again a coherent hyperdoctrine and the assignment $P \mapsto P^\delta$ extends to a functor $\pDL \to \pDL$. Furthermore, the morphism $(\id, \eta^P) \colon P \to P^\delta$, where $\eta^P$ is given by, for $A \in \cat{B}$,
$$
\eta_A^P = e_{P(A)} \colon P(A) \to P(A)^\delta = P^\delta(A),
$$
is a morphism of coherent hyperdoctrines.
\end{proposition}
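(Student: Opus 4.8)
The plan is to equip $P^\delta = \CE \after P$ with all the required structure by $\delta$-extending the corresponding structure of $P$, and to verify each defining equation by lifting it from $P$ via Proposition~\ref{prop:prescomp}. That $P^\delta$ is a $\DL$-valued functor is immediate, since it is a composite of functors and each $P^\delta(\alpha) = P(\alpha)^\delta$ is a (complete) lattice homomorphism. Throughout I would use two elementary facts about the $\sigma$-extension: it is monotone in its argument (if $f \le g$ pointwise then $f^\sigma \le g^\sigma$, read off directly from the defining formulas), and $\id^\sigma = \id$ while for finite-join-preserving $f$ one has $f^\sigma = f^\delta$. For $\alpha \colon A \to B$ in $\B$ the candidate left adjoint of $P(\alpha)^\delta$ is $(\exists_\alpha)^\delta$; since $\exists_\alpha$ is a left adjoint it preserves finite joins, so this extension exists and is completely join-preserving. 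As lattices are posets, it suffices to establish the unit inequality $\id \le P(\alpha)^\delta \after (\exists_\alpha)^\delta$ and the counit inequality $(\exists_\alpha)^\delta \after P(\alpha)^\delta \le \id$. Both follow by applying $\sigma$ to the unit $\id \le P(\alpha) \after \exists_\alpha$ and counit $\exists_\alpha \after P(\alpha) \le \id$ of the original adjunction: monotonicity of $\sigma$ delivers the inequality, Proposition~\ref{prop:prescomp} (outer map $P(\alpha)$, resp.\ $\exists_\alpha$, preserving finite joins) splits the $\sigma$-extension of each composite, and $\id^\sigma = \id$. Hence $(\exists_\alpha)^\delta \dashv P(\alpha)^\delta$.

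Beck--Chevalley and Frobenius are then obtained by the same lifting principle. For Beck--Chevalley I start from the equality of finite-join-preserving maps $P(\beta) \after \exists_\alpha = \exists_{\alpha'} \after P(\beta')$ valid for $P$; applying $\sigma$ and splitting each side with Proposition~\ref{prop:prescomp} (the outer maps $P(\beta)$ and $\exists_{\alpha'}$ both preserve finite joins) yields $P(\beta)^\delta \after (\exists_\alpha)^\delta = (\exists_{\alpha'})^\delta \after P(\beta')^\delta$, which is exactly Beck--Chevalley for $P^\delta$. Frobenius is slightly more delicate because it involves the binary meet. I would read both sides of $\exists_\alpha(a \wedge P(\alpha)(b)) = \exists_\alpha(a) \wedge b$ as maps $P(A) \times P(B) \to P(B)$, namely $\exists_\alpha \after \wedge \after (\id \times P(\alpha))$ and $\wedge \after (\exists_\alpha \times \id)$, and compute their $\sigma$-extensions stepwise, using that $\wedge$ preserves finite joins in each coordinate (distributivity) and that $\CE$ preserves finite products, so that $(\id \times P(\alpha))^\sigma = \id \times P(\alpha)^\delta$ and $(\exists_\alpha \times \id)^\sigma = (\exists_\alpha)^\delta \times \id$. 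Invoking the standard fact that the meet of a distributive lattice extends to the meet of its canonical extension, i.e.\ that $\wedge^\sigma$ is the meet of $P(A)^\delta$ (see \cite{GeJo94, GePr08}), the two $\sigma$-extensions compute to $(u,v) \mapsto (\exists_\alpha)^\delta(u \wedge P(\alpha)^\delta(v))$ and $(u,v) \mapsto (\exists_\alpha)^\delta(u) \wedge v$; as the two original maps coincide, so do their extensions, which is Frobenius for $P^\delta$.

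For functoriality of $P \mapsto P^\delta$ on $\pDL$, given $(K, \tau) \colon P_1 \to P_2$ I would set $(K, \tau)^\delta = (K, \tau^\delta)$ with $\tau^\delta_A = (\tau_A)^\delta$. Naturality of $\tau^\delta$ and preservation of identities and composites follow directly from functoriality of $\CE$ on $\DL$, since the naturality squares of $\tau$ involve only lattice homomorphisms. The remaining compatibility $(\exists^{P_2}_{K(\alpha)})^\delta \after (\tau_A)^\delta = (\tau_B)^\delta \after (\exists^{P_1}_\alpha)^\delta$ is obtained, as before, by $\sigma$-extending the corresponding equation for $(K, \tau)$ and splitting via Proposition~\ref{prop:prescomp}. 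Finally, for the unit morphism $(\id, \eta^P) \colon P \to P^\delta$, both the naturality of $\eta^P$ (that is, $P(\alpha)^\delta \after e_{P(B)} = e_{P(A)} \after P(\alpha)$) and the $\exists$-compatibility $(\exists_\alpha)^\delta \after e_{P(A)} = e_{P(B)} \after \exists_\alpha$ are instances of the fact that every $\delta$-extension restricts to the original map along $e$, i.e.\ the naturality of the embedding $e$.

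The hard part will be the bookkeeping in the Frobenius step: one must check in each application that the outer map genuinely satisfies the finite-join-preservation hypothesis of Proposition~\ref{prop:prescomp}, and one must justify that $\wedge^\sigma$ really is the meet of the canonical extension rather than merely an extension of $\wedge$. Once these two points are secured, every defining equation lifts essentially mechanically, since all the maps in sight are either lattice homomorphisms or finite-join-preserving adjoints.
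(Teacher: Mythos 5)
Your proposal is correct and takes essentially the same route as the paper: canonicity of Beck--Chevalley and Frobenius is obtained by splitting $\sigma$-extensions of composites via Proposition~\ref{prop:prescomp} (checking the outer map preserves finite joins coordinate-wise), with $\wedge^\delta$ identified as the meet of the canonical extension, and the action on morphisms and the unit $(\id,\eta^P)$ handled by the same lifting. The only deviation is that where the paper simply cites the preservation of adjunctions under canonical extension (Proposition 3.6 of \cite{DGP}) to obtain $(\exists_\alpha)^\delta \dashv P(\alpha)^\delta$, you re-derive it by $\sigma$-extending the unit and counit inequalities using monotonicity of $(\_)^\sigma$ and $\id^\sigma = \id$ --- a correct, self-contained substitute for that citation.
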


\begin{proof}
Let $P \colon \cat{B}^{\op} \to \DL$ be a coherent hyperdoctrine. Adjunctions are preserved under canonical extension, see {\it e.g.} Proposition 3.6 in \cite{DGP}. Hence, for $A \xrightarrow{\alpha} B$ in $\cat{B}$, $(\exists_\alpha)^\delta$ is left adjoint to $P^\delta(\alpha) = (P(\alpha))^\delta$. To prove that $P^\delta$ is a coherent hyperdoctrine, we show that the Beck-Chevalley condition (BC) and Frobenius reciprocity (F) are canonical, that is, if $P$ satisfies (BC) (resp. (F)), then so does $P^\delta$. 
First we consider (BC). Let the following diagram be a pullback in $\cat{B}$.
$$
\xymatrix{
Q \ar[r]^-{\alpha'} \ar[d]_-{\beta'} & B \ar[d]^-{\beta}\\
A \ar[r]_-{\alpha} & C
}
$$
Then
$$
\begin{array}{rcll}
P(\alpha)^\delta \after \exists_{\beta}^\delta
&=&
(P(\alpha) \after \exists_{\beta})^\delta &\text{(Proposition~\ref{prop:prescomp})}\\
&=&
(\exists_{\beta'} \after P(\alpha'))^\delta &\text{($P$ satisfies (BC))}\\
&=&
\exists_{\beta'}^\delta \after P(\alpha')^\delta &\text{(Proposition~\ref{prop:prescomp})}.
\end{array}
$$ 
To prove canonicity of the Frobenius reciprocity, note that this condition (on $P$) may be formulated by saying that, for all $A \xrightarrow{\alpha} B$ in $\B$, the following diagram commutes.
$$
\xymatrix@C=4pc{
P(A) \times P(B) \ar[r]^-{\id \times P(\alpha)} \ar[d]_-{\exists_{\alpha}\times\id} & P(A) \times P(A) \ar[r]^-{\wedge} & P(A) \ar[d]^-{\exists_{\alpha}}\\
P(B) \times P(B) \ar[rr]_-{\wedge} && P(B)
}
$$
Note that the meet operation $\wedge\colon L \times L \to L$ on a lattice $L$ is meet preserving and therefore has a unique extension $\wedge^\delta \colon L^\delta \times L^\delta \cong (L \times L)^\delta \to L^\delta$. As meet is right adjoint to the diagonal map and adjunctions are preserved under canonical extension, the map $\wedge^\delta$ gives the meet on $L^\delta$. As above for (BC), we may use Proposition~\ref{prop:prescomp} to derive that also in $P^\delta$ the Frobenius reciprocity holds.

For a morphism $(K, \tau) \colon P_1 \to P_2$ of coherent hyperdoctrines over $\B_1$ and $\B_2$, we define a morphism $(K, \tau^\delta) \colon P_1^\delta \to P_2^\delta$ by, for $A \in \B_1$, $\tau^\delta_A = (\tau_A)^\delta \colon P_1(A)^\delta \to P_2(K(A))^\delta$. To prove that $\tau^\delta$ is a natural transformation which preserves existential quantification, one may again rely on Proposition~\ref{prop:prescomp} and use the fact that $\tau$ has these properties. 

It is readily checked that $(\id, \eta^P) \colon P \to P^\delta$ is a morphism of coherent hyperdoctrines.\QED
\end{proof}

In particular, for a coherent category $\C$, $\S_\C^\delta$ is a coherent hyperdoctrine. This leads us to study the following construction.

\begin{definition}
The \emph{canonical extension of a coherent category} $\C$ is defined to be the functor $E_\C \colon \C \to \A(\S_\C^\delta)$ given by
$$
\begin{array}{ccccl}
\C &\xrightarrow{\sim}& \A(\S_{\C}) &\xrightarrow{\A(\id, \eta^{\S_\C})} &\A({\S_{\C}}^\delta) \\
\xymatrix{A \ar[d]^-{\alpha}\\B} &\mapsto & \xymatrix{(A, \top) \ar[d]^{\langle \id, \alpha\rangle}\\(B,\top)} & \mapsto & \xymatrix{(A, \top) \ar[d]^{\langle \id, \alpha\rangle \in \S_\C(A \times B) \hookrightarrow \S_\C^\delta(A\times B)}\\(B, \top)}.
\end{array}
$$ 
and we write $\C^\delta = \A(\S_\C^\delta)$. The assignment $\C \mapsto \C^\delta$ extends to a 2-functor on the category $\Coh$.
\end{definition}

This definition extends the existing notion of canonical extension for distributive lattices in the sense that, for a distributive lattice $\L$ (viewed as a coherent category), the category $\A(\S_\L^\delta)$ is equivalent to the (ordinary) canonical extension $\L^\delta$ of $\L$. To prove this we use the following fact.

\begin{lemma}
For a distributive lattice $\L$ and $a \in \L$, the canonical extension $(\downarrow_\L \!a)^\delta$ of the downset of $a$ in $\L$ is the restriction of the embedding $\L \hookrightarrow \L^\delta$ to $\downarrow_\L \!a \hookrightarrow \,\downarrow_{\L^\delta}\! a$.
\end{lemma}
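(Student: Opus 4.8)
The plan is to invoke the uniqueness of canonical extensions (the Theorem following the definition) and simply verify that the restriction $e \colon \,\downarrow_\L \!a \hookrightarrow \,\downarrow_{\L^\delta}\! a$ of $e_\L$ is a dense and compact embedding of $\downarrow_\L \!a$ into a complete lattice. First I would record the elementary structural facts: the downset $\downarrow_\L \!a$ is a bounded distributive lattice with top $a$ and bottom $0$; the principal downset $\downarrow_{\L^\delta}\! a$ is a complete lattice in which all joins and all \emph{nonempty} meets are computed as in $\L^\delta$, while the empty meet (the top) is $a$; and $e$ is a bounded-lattice embedding, since $e_\L$ preserves finite meets, finite joins and $0$, and sends $a$ to the top of $\downarrow_{\L^\delta}\! a$.

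For density I would treat the two halves separately. Take $u \in \,\downarrow_{\L^\delta}\! a$. Writing $u$ as a join of the filter elements $x \in \Cl{L}$ below it (density of $e_\L$), each such $x$ satisfies $x \le a$, so $x = a \wedge x = \bigwedge^{\L^\delta}\{e_\L(b)\wedge a \mid x \le b \in \L\} = \bigwedge^{\L^\delta} e_\L[\{b \wedge a \mid x \le b \in \L\}]$, a meet of elements of $e[\downarrow_\L \!a]$ (using $e_\L(b)\wedge a = e_\L(b\wedge a)$); hence $u$ is a join of meets. Dually, writing $u$ as a meet of the ideal elements $y \in \Op{L}$ above it, I use $u \le a$ to get $u = \bigwedge^{\L^\delta}\{y \wedge a \mid u \le y\}$, and for each such $y = \bigvee^{\L^\delta} e_\L[\{b \in \L \mid b \le y\}]$ the complete distributivity of $\L^\delta$ gives $y \wedge a = \bigvee^{\L^\delta} e_\L[\{b \wedge a \mid b \le y\}]$, a join of elements of $e[\downarrow_\L \!a]$; hence $u$ is a meet of joins. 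The one place where distributivity is genuinely used is this last step, where meet must distribute over an infinite join.

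For compactness the clean uniform trick is to absorb the top into the meet. Given $F, I \subseteq \,\downarrow_\L \!a$ with $\bigwedge e[F] \le \bigvee e[I]$ in $\downarrow_{\L^\delta}\! a$, I rewrite the left side as $a \wedge \bigwedge^{\L^\delta} e_\L[F] = \bigwedge^{\L^\delta} e_\L[F \cup \{a\}]$ and the right side as $\bigvee^{\L^\delta} e_\L[I]$, so that $\bigwedge^{\L^\delta} e_\L[F\cup\{a\}] \le \bigvee^{\L^\delta} e_\L[I]$. Compactness of $e_\L$ then yields finite $G' \subseteq F \cup \{a\}$ and $I' \subseteq I$ with $\bigwedge^\L G' \le \bigvee^\L I'$. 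Setting $F' = G' \cap F$, so that $G' \subseteq F' \cup \{a\}$, I obtain in $\downarrow_\L \!a$ that $\bigwedge F' = a \wedge \bigwedge^\L F' = \bigwedge^\L(F' \cup \{a\}) \le \bigwedge^\L G' \le \bigvee^\L I' = \bigvee I'$. This produces the required finite subsets and simultaneously disposes of the empty-meet boundary case (where $F'$ may be empty and $\bigwedge F' = a$).

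Having shown that $e$ is a dense and compact embedding into the complete lattice $\downarrow_{\L^\delta}\! a$, uniqueness of canonical extensions identifies it with $(\downarrow_\L \!a)^\delta$, which is exactly the assertion. I expect the only real subtlety to be the bookkeeping around the bounds — remembering that the top of $\downarrow_{\L^\delta}\! a$ is $a$ rather than $1$ and that empty meets must be handled — together with the single appeal to the complete distributivity of $\L^\delta$ in the meet-of-joins half of density.
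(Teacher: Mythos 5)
Your proof is correct and follows essentially the same route as the paper's: it verifies that the restriction of $e_\L$ is a dense and compact embedding of $\downarrow_\L\!a$ into the complete lattice $\downarrow_{\L^\delta}\!a$ (the paper phrases the two density halves via closure of $\Op{L}$ under meeting with $a$ and the fact that filter elements below $a$ remain filter elements, where you expand the same identities directly), and then appeals to uniqueness of canonical extensions. If anything, your compactness step is more careful than the paper's, which transfers $\bigwedge F \le \bigvee I$ from $\downarrow_{\L^\delta}\!a$ to $\L^\delta$ in one line and thereby glosses over the empty-meet boundary case (where $\bigwedge \emptyset = a$ in the downset but $1$ in $\L^\delta$) that your $F \cup \{a\}$ absorption trick disposes of explicitly.
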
 

\auxproof{
Clearly $\downarrow_{\L^\delta}\! a$ is a complete lattice. Hence, it remains to show that the given embedding is dense and compact. We start by proving denseness. Let $u \in \downarrow_{\L^\delta\!} a$. Then
$$
\begin{array}{rcll}
u &=& \bigwedge \{y \in \Op{L}\,|\,u \le y\} &\text{(denseness of $\L \hookrightarrow \L^\delta$)}\\
&=&
a \wedge \bigwedge \{y \in \Op{L}\,|\,u \le y\} &\text{($u \le a$)}\\
&=&
\bigwedge \{y \wedge a\,|\,y \in \Op{L}\,|\,u \le y\} &\text{(distributivity)}\\
&=&
\bigwedge \{y \in \Op{L}\,|\,u \le y \le a\} &\text{(for $y \in \Op{L}$, $y \wedge a \in \Op{L}$)}\\
&=&
\bigwedge \{y \in I(\downarrow_{\L^\delta}\! a)\,|\,u \le y\} &\text{($I(\downarrow_{\L^\delta}\! a) = \Op{L} \,\cap \,\downarrow_{\L^\delta}\! a$)}\\
\end{array}
$$
This proves $\bigwedge\bigvee$-denseness. The $\bigvee\bigwedge$-denseness follows immediately from the fact that all filter elements of $\L^\delta$ below $a$ are also filter elements in $\downarrow_{\L^\delta}\! a$. Then, for $u \in \downarrow_{\L^\delta}\! a$, 
$$
\begin{array}{rcll}
u &=& \bigvee\{x \in \Cl{L}\,|\,x \le u\}\\
&=&
\bigvee\{x \in F(\downarrow_{\L^\delta}\! a)\,|\,x \le u\} &\text{(as $x \le u$ implies $x \le a$)}.
\end{array}
$$
To prove compactness, let $F, I \subseteq \downarrow_\L\! a$ s.t. $\bigwedge F \le_{\downarrow_{\L^\delta}\! a} \bigvee I$. Then also $\bigwedge F \le_{\L^\delta} \bigvee I$. Hence there exist finite subsets $F' \subseteq F$, $I' \subseteq I$ such that $\bigwedge F' \le_\L \bigvee I'$. This inequality clearly also holds in $\downarrow_\L\! a$.\QED
}

\begin{proposition}
\label{prop:dl}
Let $\L$ be a distributive lattice. Viewing $\L$ as a coherent category, $\A(\S_\L^\delta) \simeq \L^\delta$.
\end{proposition}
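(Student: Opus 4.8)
The plan is to compute the objects and morphisms of $\A(\S_\L^\delta)$ explicitly, and to show that the resulting category is a preorder whose poset reflection is order-isomorphic to $(\L^\delta, \le)$, so that as coherent categories (both sides being posetal) they are equivalent.

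First I would pin down the hyperdoctrine $\S_\L^\delta$. Viewing $\L$ as a coherent category, every morphism is the witness of an inequality $A \le B$, finite products are meets, and the terminal object is the top $1$ of $\L$. Hence the subobjects of an object $A$ are exactly the elements below $A$, so $Sub_\L(A) = \,\downarrow_\L A$, reindexing along $\alpha \colon A \to B$ is meet with $A$, i.e.\ $Sub_\L(\alpha) = (\_)\wedge A$, and its left adjoint $\exists_\alpha$ is the inclusion $\downarrow_\L A \hookrightarrow \,\downarrow_\L B$. Applying $\CE$ and the preceding Lemma gives $\S_\L^\delta(A) = (\downarrow_\L A)^\delta = \,\downarrow_{\L^\delta} A$; since reindexing is a lattice homomorphism its extension is again meet with $A$, and since adjunctions are preserved under canonical extension the extended $\exists_\alpha$ is still the inclusion $\downarrow_{\L^\delta} A \hookrightarrow \,\downarrow_{\L^\delta} B$.

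Next I would read off the objects of $\A(\S_\L^\delta)$: pairs $(A,a)$ with $A \in \L$ and $a \in \,\downarrow_{\L^\delta} A$, i.e.\ $a \in \L^\delta$ with $a \le A$. The heart of the argument is to evaluate the three functional-relation conditions on a putative morphism $(A,a) \to (B,b)$. Since $A \times B = A \wedge B$, an element $f \in \S_\L^\delta(A\times B)$ is an $f \le A \wedge B$ in $\L^\delta$; the projections $\pi_1,\pi_2$ are the inequalities $A\wedge B \le A$ and $A\wedge B \le B$, so $\pi_1^*(a) = a \wedge B$, $\pi_2^*(b) = b \wedge A$, and $\exists_{\pi_1}$ is the inclusion. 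Thus condition (i) reads $a \le f$, condition (ii) reads $f \le (a\wedge B)\wedge(b\wedge A) = a \wedge b$, and together these force $f = a$ and $a \le b$.

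Finally condition (iii) is automatic: because $B \times B = B$ and $A \times B \times B = A \wedge B$ in the poset, the maps $\langle\pi_1,\pi_2\rangle$ and $\langle\pi_1,\pi_3\rangle$ are both the identity on $A\wedge B$, so their joint pullback of $f$ is $f$; the map $\pi$ is the inequality $A\wedge B \le B$ with $\exists_\pi$ the inclusion, making the left-hand side $f$, while $\Delta_B \colon B \to B\times B = B$ is the identity, so the right-hand side is $\exists_{\Delta_B}(\top) = \top = B \ge f$. Hence a morphism $(A,a)\to(B,b)$ exists if and only if $a \le b$, and is then unique. Consequently $\A(\S_\L^\delta)$ is a preorder in which $(A,a) \cong (A',a')$ iff $a = a'$, and $(A,a) \mapsto a$ is a full and faithful functor which is essentially surjective onto $(\L^\delta,\le)$, since every $u \in \L^\delta$ occurs as $(1,u)$; as $\L^\delta$ viewed as a coherent category is precisely this poset, we obtain $\A(\S_\L^\delta) \simeq \L^\delta$. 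The main obstacle — and essentially the only place requiring care — is the bookkeeping of the reindexing maps and their left adjoints along the projections, together with verifying that the single-valuedness condition (iii) genuinely trivialises once $B \times B$ collapses to $B$; everything else is formal.
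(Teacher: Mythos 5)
Your proposal is correct and follows essentially the same route as the paper: identify $\S_\L^\delta(a)$ with $\downarrow_{\L^\delta}a$ via the preceding lemma, unwind the three functional-relation conditions to show a morphism $(a,u)\to(b,v)$ exists (uniquely, with witness $u$) iff $u\le v$, and conclude the resulting preorder is equivalent to $\L^\delta$. The only cosmetic differences are that you spell out why condition (iii) trivialises (the paper merely calls it vacuous) and you present the equivalence as the projection $(a,u)\mapsto u$ rather than the paper's quasi-inverse $u\mapsto(1,u)$.
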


\begin{proof}
To define an equivalence $L^\delta \to \A(\S_\L^\delta)$, we start by describing the category $\A(\S_\L^\delta)$. For a distributive lattice $\L$ and $a \in \L$, $\S_\L(a)$ is the downset $\downarrow_\L\!a$ of $a$ in $\L$.
Hence, the functor $\S_\L^\delta$ is given by
$$
\begin{array}{rcl}
\S_\L^\delta \colon \L^{\op} &\to& \DL\\
a &\mapsto& (\downarrow_\L \!a)^\delta = \downarrow_{\L^\delta}\! a\\
a \le b &\mapsto&
\begin{array}[t]{rcl}
 \downarrow_{\L^\delta}\! b &\to & \downarrow_{\L^\delta}\! a\\
u &\mapsto & u \wedge a.
\end{array}
\end{array}
$$
It follows that the objects of $\A(\S_\L^\delta)$ are pairs $(a, u)$ where $a \in \L$ and $u \in \downarrow_{\L^\delta}\! a$. The essence of the proof is that, for $u \in L^\delta$, all `copies $(a,u)$ of $u$' in $\A(\S_L^\delta)$ are isomorphic. To give a precise proof, we first describe the morphisms in $\A(\S_\L^\delta)$.

Note that the left adjoint to $\S_\L^\delta(a\le b)$ is just the inclusion map $\downarrow_{\L^\delta}\! a \hookrightarrow \downarrow_{\L^\delta}\! b$. By definition, a morphism $(a, u) \to (b, v)$ is an element $w$ of $\S_\L^\delta(a \times b) = \downarrow_{\L^\delta}\! (a \wedge b)$ satisfying
\begin{enumerate}
\item $u \le \exists_{\pi_1}(w) = w$;
\item$ w \le \pi_1^*(u) \wedge \pi_2^*(v) = u \wedge (a \wedge b) \wedge v \wedge (a \wedge b) = u \wedge v$;
\item $\exists_\pi(\langle \pi_1, \pi_2\rangle^*(w) \wedge \langle \pi_1, \pi_3\rangle^*(w)) \le \exists_{\Delta_J}(\top) = \top$, this is vacuously true.
\end{enumerate}
Hence, the only candidate for a morphism  $(a, u) \to (b, v)$ is $u$. It follows that there is a (unique) morphism  $(a, u) \to (b, v)$ if and only if $u \le v$. Therefore, for $(a,u) \in \A(\S_\L^\delta)$, $(a, u) \cong (1, u)$.  Define a functor
$$
\begin{array}{rcll}
\L^\delta &\to& \A(\S_\L)^\delta\\
u &\mapsto & (1,u)\\
u \le v &\mapsto &(1,u) \xrightarrow{u} (1,v).
\end{array}
$$
This functor is full and faithful and essentially surjective, {\it i.e.}, it is an equivalence. \QED
\end{proof}

Our construction of canonical extension for coherent categories has a universal property similar to the one known from the algebraic setting, as we prove in Theorem~\ref{thm:extcohcat}. It is slightly more complicated than the universal property for distributive lattices because of the existential quantifiers (the left adjoints). These are similar to the diamond operators from modal logic (both are join-preserving maps) and the Esakia Lemma plays a crucial role.

\begin{lemma}[Esakia Lemma, \cite{Es}]
Let $f \colon \L \to \K$ be a join-preserving map between distributive lattices. For any filtered subset $F \subseteq \Cl{L}$, $f^\delta(\bigwedge F) = \bigwedge f^\delta[F]$.
\end{lemma}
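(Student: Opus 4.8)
The plan is to establish the two inequalities separately. The inequality $f^\delta(\bigwedge F) \le \bigwedge f^\delta[F]$ is immediate: for every $x \in F$ we have $\bigwedge F \le x$, and since $f^\delta$ is monotone (being completely join-preserving, by the earlier proposition) this gives $f^\delta(\bigwedge F) \le f^\delta(x)$; taking the meet over all $x \in F$ yields the claim. The substance of the lemma is therefore the reverse inequality $\bigwedge f^\delta[F] \le f^\delta(\bigwedge F)$.

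For this, I would first note that $\bigwedge F$ is itself a filter element: each $x \in F$ equals $\bigwedge\{a \in L : x \le a\}$, so $\bigwedge F$ is a meet of elements of $L$ and hence lies in $\Cl{L}$. Since $f$ is join-preserving we have $f^\delta = f^\sigma$, so the defining formula for the $\sigma$-extension on filter elements applies and $f^\delta(\bigwedge F) = \bigwedge\{f(a) : \bigwedge F \le a \in L\}$. To prove the reverse inequality it then suffices to show, for each individual $a \in L$ with $\bigwedge F \le a$, that $\bigwedge f^\delta[F] \le f(a)$.

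Fix such an $a$. The key step, which I expect to be the main obstacle, is to show that filteredness of $F$ together with compactness of $e_L$ produces a single $x \in F$ with $x \le a$. To this end, write $G_x = \{b \in L : x \le b\}$ for the filter associated with $x \in F$. Because $F$ is down-directed and $z \le x$ implies $G_x \subseteq G_z$, the family $\{G_x\}_{x \in F}$ is up-directed under inclusion (and non-empty, as filtered sets are). Writing $\bigwedge F = \bigwedge(\bigcup_{x \in F} G_x)$, the hypothesis $\bigwedge F \le a$ reads $\bigwedge e_L[\bigcup_x G_x] \le \bigvee e_L[\{a\}]$, so compactness supplies a finite set $S \subseteq \bigcup_x G_x$ with $\bigwedge S \le a$ in $L$; up-directedness then lets me find one $x \in F$ with $S \subseteq G_x$, whence $x \le \bigwedge S \le a$. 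Finally, since $x \in F$ and $f^\delta$ is monotone and extends $f$ on $L$, I conclude $\bigwedge f^\delta[F] \le f^\delta(x) \le f^\delta(a) = f(a)$, which establishes the reverse inequality and completes the proof.
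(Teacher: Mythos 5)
Your proof is correct. For context: the paper itself does not prove this lemma---it is quoted with a citation to Esakia (the generalisation to canonical extensions of arbitrary distributive lattices being due to Gehrke and J\'onsson)---so there is no in-paper argument to compare yours against. What you give is the standard proof: observe that $\bigwedge F$ is again a filter element (a meet of meets of elements of $\L$), unfold $f^\delta(\bigwedge F)=\bigwedge\{f(a)\mid \bigwedge F\le a\in \L\}$ using $f^\delta=f^\sigma$, and for each such $a$ use compactness of $e_\L$ together with down-directedness of $F$ (equivalently, up-directedness of the associated filters $G_x$) to produce a single $x\in F$ with $x\le a$; monotonicity of $f^\delta$ then finishes. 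Each step checks out, with two micro-remarks only. First, when you apply compactness with $I=\{a\}$, the finite subset extracted on the right-hand side could in principle be empty, but then $\bigwedge S\le 0\le a$, so the conclusion $\bigwedge S\le a$ survives (and you already cover the degenerate case $S=\emptyset$ via nonemptiness of filtered $F$). Second, it is worth noticing that join-preservation of $f$ enters your argument only through the identification $f^\delta=f^\sigma$ and monotonicity; verbatim the same proof shows that $f^\sigma$ preserves down-directed meets of filter elements for an \emph{arbitrary} order-preserving $f$, which is the more general form of the lemma found in the literature and is in fact all the paper needs, since it applies the lemma to the join-preserving maps $\exists_\alpha$.
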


The following definition generalises the notion of $\DL^+$ to the categorical setting.

\begin{definition}
We define $\Coh^+$ to be the category of coherent categories with the additional property that all subobjects lattices are completely distributive algebraic and the pullback functors preserve all joins. The morphisms in $\Coh^+$ are coherent functors which preserve all meets and joins of subobjects.
\end{definition}

Note that, as pullback morphisms preserve all limits, in $\Coh^+$ the pullback morphisms are complete lattice homomorphisms.  

The mapping $\C \mapsto \C^\delta$ is \emph{not} left adjoint to the inclusion functor $\Coh^+ \hookrightarrow \Coh$. For a coherent functor $M \colon \C \to \D$, where $\C \in \Coh$ and $\D \in \Coh^+$, there is, up to isomorphism, a unique extension of $M$ to a functor $\widetilde{M} \colon \C^\delta \to \D$, which preserves all meets and joins of subobjects. However, this functor $\widetilde{M}$ does not preserve existential quantification in general. Therefore, to describe a universal property of our construction, we have to restrict the morphisms we consider. This leads to the definition of a p-model. A similar notion was introduced by Makkai in \cite{Mak} to describe the universal property of his topos of types construction. 

\begin{definition}
\label{def:pmod}
Let $\C$ be a coherent category, $\D \in \Coh^+$ and $M \colon \C \to \D$ a coherent functor. We say $M$ is a \emph{p-model} iff, for all $A \xrightarrow{\alpha} B$ in $\C$ and $\rho$ a prime filter in $Sub_\C(A)$,
$$
\exists_{M(\alpha)}(\bigwedge\{M(U)\,|\,U \in \rho\}) = \bigwedge\{\exists_{M(\alpha)}(M(U))\,|\, U \in \rho\},
$$
where the meets are taken in $Sub_\D(M(A))$ and $Sub_\D(\exists_{M(\alpha)}(M(A)))$, respectively.
\end{definition}

\begin{proposition}
For a coherent category $\C$, $E_\C \colon \C \to \C^\delta$ is a p-model.
\end{proposition}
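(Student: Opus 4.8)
The plan is to reduce the p-model identity directly to the Esakia Lemma. Recall that $E_\C$ factors as $\C \xrightarrow{\sim} \A(\S_\C) \xrightarrow{\A(\id, \eta^{\S_\C})} \A(\S_\C^\delta) = \C^\delta$. First I would read off the action of $E_\C$ on subobject lattices and on existential quantifiers. Under the equivalence $\C \simeq \A(\S_\C)$ the lattice $Sub_\C(A)$ is identified with the subobjects of $(A, \top)$, and $\A(\id, \eta^{\S_\C})$ acts on subobject lattices through the component $\eta_A^{\S_\C} = e_{Sub_\C(A)}$. Hence
$$Sub_{\C^\delta}(E_\C(A)) \cong (Sub_\C(A))^\delta,$$
and the restriction of $E_\C$ to subobjects is the canonical embedding $e_{Sub_\C(A)} \colon Sub_\C(A) \hookrightarrow (Sub_\C(A))^\delta$. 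Likewise, since $(\id, \eta^{\S_\C})$ is a morphism of coherent hyperdoctrines and $\exists_\alpha$ is finite-join-preserving (so $(\exists_\alpha)^\sigma = (\exists_\alpha)^\pi = (\exists_\alpha)^\delta$), the existential quantifier $\exists_{E_\C(\alpha)}$ in $\C^\delta$ is $(\exists_\alpha)^\delta$. One should also record that $\C^\delta \in \Coh^+$, which is immediate as each $(Sub_\C(A))^\delta$ is completely distributive and algebraic and each pullback $(\alpha^*)^\delta$ is a complete homomorphism, so that the p-model condition is even meaningful.

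With these identifications, write $U$ also for $e_{Sub_\C(A)}(U) = E_\C(U)$. The inner meet on the left-hand side, taken in $Sub_{\C^\delta}(E_\C(A)) = (Sub_\C(A))^\delta$, is the filter element $\bigwedge\{U \mid U \in \rho\}$, and $\exists_{E_\C(\alpha)}$ sends it to $(\exists_\alpha)^\delta(\bigwedge\{U\mid U\in\rho\})$. On the right-hand side each $\exists_{E_\C(\alpha)}(E_\C(U)) = (\exists_\alpha)^\delta(U) = E_\C(\exists_\alpha(U))$ lies below the image $\exists_{E_\C(\alpha)}(\top)$, so the meet computed in $Sub_{\C^\delta}(\exists_{E_\C(\alpha)}(E_\C(A)))$ agrees with the meet computed in $(Sub_\C(B))^\delta$. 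Thus the p-model identity reduces to
$$(\exists_\alpha)^\delta\Big(\bigwedge\{U \mid U \in \rho\}\Big) = \bigwedge\{(\exists_\alpha)^\delta(U)\mid U\in\rho\}.$$

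This is exactly an instance of the Esakia Lemma. Since every $U \in Sub_\C(A)$ is a lattice element, its image is a filter element, so $\{U \mid U \in \rho\} \subseteq \Cl{Sub_\C(A)}$; and since $\rho$ is a filter it is closed under binary meets, so this family is filtered. The map $\exists_\alpha \colon Sub_\C(A) \to Sub_\C(B)$ is join-preserving, being a left adjoint. Applying the Esakia Lemma to $f = \exists_\alpha$ and the filtered family $F = \{U \mid U \in \rho\}$ gives $f^\delta(\bigwedge F) = \bigwedge f^\delta[F]$, which is precisely the required equality. The only place where any real work occurs is the first step: correctly matching the subobject hyperdoctrine of $\A(\S_\C^\delta)$ along $E_\C$ with $(Sub_\C)^\delta$, and in particular verifying that $E_\C$ acts as the canonical embedding on subobjects with existential quantifiers the $\delta$-extensions of those of $\C$. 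Once that matching is in place the p-model identity is a verbatim restatement of the Esakia Lemma; note also that primeness of $\rho$ plays no role here, only that $\rho$ is a filter, which is what makes the family filtered.
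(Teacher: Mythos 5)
Your overall route is the paper's own: you reduce the p-model identity to the Esakia Lemma by (i) checking $\C^\delta \in \Coh^+$ so that the statement makes sense, (ii) identifying $Sub_{\C^\delta}(E_\C(A))$ with $Sub_\C(A)^\delta$ and the action of $E_\C$ on subobjects with the canonical embedding $e_{Sub_\C(A)}$, (iii) identifying $\exists_{E_\C(\alpha)}$ with $(\exists_\alpha)^\delta$, and (iv) applying the Esakia Lemma to the family $\{U \mid U \in \rho\}$, which consists of filter elements and is filtered because $\rho$ is closed under binary meets. Steps (i), (ii) and (iv) are correct and match the paper; your explicit remark that the meet in the downset of $\exists_{E_\C(\alpha)}(\top)$ agrees with the ambient meet is a point the paper leaves tacit, and you are also right that only filteredness, not primeness, of $\rho$ is used.

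The gap is at step (iii) --- exactly the step you yourself flag as ``the only place where any real work occurs,'' but which you then assert rather than prove. The justification you offer, namely that $(\id, \eta^{\S_\C})$ is a morphism of coherent hyperdoctrines and that $\exists_\alpha$ is finite-join-preserving, establishes that the \emph{hyperdoctrine} quantifier of $\S_\C^\delta$ is $(\exists_\alpha)^\delta$ (adjunctions are preserved under canonical extension) and that it commutes with the embeddings $\eta$; but it says nothing yet about the \emph{subobject-image} functor $\exists_{E_\C(\alpha)}$ of the coherent category $\A(\S_\C^\delta)$, i.e. the left adjoint to pullback along the functional relation $E_\C(\alpha) = \langle \id, \alpha \rangle \in \S_\C^\delta(A \times B)$. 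That these two quantifiers coincide under the isomorphism $Sub_{\C^\delta}(A, \top) \cong \S_\C^\delta(A)$ is precisely what must be verified, and it does not follow formally from the hyperdoctrine-morphism property. The paper closes this by uniqueness of adjoints: pullback along a functional relation $f$ in $\A(\S_\C^\delta)$ is computed as $f^*(w) = \exists_{\pi_1}^{\S_\C^\delta}(f \wedge \S_\C^\delta(\pi_2)(w))$; specialising to $f = \langle \id, \alpha \rangle$ one checks $E_\C(\alpha)^*(V) = \alpha^*(V) = (\alpha^*)^\delta(V)$ for $V \in \S_\C(B)$, and since both $E_\C(\alpha)^*$ and $(\alpha^*)^\delta$ are complete lattice homomorphisms (this is where the $\Coh^+$ part of the argument is actually used) and $\S_\C(B)$ is dense in $\S_\C(B)^\delta$, it follows that $E_\C(\alpha)^* = (\alpha^*)^\delta$, hence $\exists_{E_\C(\alpha)} = (\exists_\alpha)^\delta$. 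Inserting this computation closes the gap and makes your argument coincide with the paper's proof.
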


\begin{proof}
Let $\C$ be a coherent category. First we show that $\C^\delta = \A(\S_\C^\delta)$ is in $\Coh^+$. For $(A, u) \in \A(\S_\C^\delta)$, $Sub(A,u)$ is isomorphic to the downset $\downarrow\!\!u$ in $\S_\C^\delta(A)$. As $\S_\C^\delta(A) = Sub_\C(A)^\delta$ is the canonical extension of a distributive lattice, it is completely distributive algebraic. Therefore, also $Sub(A,u)$ is completely distributive algebraic. 

For $(A,u) \xrightarrow{f} (B,v)$ in $\A(\S_{\C}^\delta)$, consider the pullback functor 
$$
f^* \colon Sub(B, v) \cong \, \downarrow_{\S_\C^\delta(B)}\! v \,\to\,\, \downarrow_{\S_\C^\delta(A)}\! u \cong Sub(A,u)
$$ 
We have to show that the map $f^*$ preserves arbitrary joins. Recall that $f \in \S_\C^\delta(A \times B)$ is, in the internal language of $\S_\C^\delta$, a functional relation $\{x \in A\,|\,u(x)\} \to \{y \in B\,|\,v(x)\}$. In this language, for $w \in \downarrow_{\S_\C^\delta(B)} v$, its inverse image under $f$ may be described as
$$
\begin{array}{rcl}
f^{-1}(w) &=& \{x \in A\,|\,\exists y \in B.\,f(x,y) \wedge w(y)\}.
\end{array}
$$
Hence, the pullback functor $f^*$ is given by, for $w \in \downarrow_{\S_\C^\delta(B)} v$,
$$
\begin{array}{rcl}
f^*(w) &=& \exists_{\pi_1}^{\S_{\C}^\delta}(f \wedge \S_{\C}^\delta(\pi_2)(w)) 
\end{array}
$$
As $\exists_{\pi_1}^{\S_\C^\delta}$ is a left adjoint, it preserves all joins. Also $\S_\C^\delta(\pi_2)$ preserves all joins (being the extension of a join-preserving map). Using the fact that $\S_\C^\delta(A \times B)$ is completely distributive, it now follows that $f^*$ preserves all joins. 

To show that $E_\C$ is a p-model, let $A \xrightarrow{\alpha} B$ in $\C$ and $\rho$ a prime filter in $Sub_\C(A)$. We have to show
\begin{equation}
\label{eq:fmM0}
\exists_{E_\C(\alpha)}(\bigwedge\{E_\C(U)\,|\,U \in \rho\}) = \bigwedge\{\exists_{E_\C(\alpha)}(E_\C(U))\,|\, U \in \rho\}.
\end{equation}
Recall that $Sub_{\A(\S_\C^\delta)}(A, \top) \cong \S_\C^\delta(A) = \S_\C(A)^\delta$. For $m \colon U \hookrightarrow A$ in $\S_\C(A)$, 
$$E_\C(U) = (U, \top) \cong (A,U),$$ 
where the isomorphism is given by ($\langle \id, m\rangle \colon U \hookrightarrow U \times A) \in \S_\C(U \times A) \subseteq \S_\C(U \times A)^\delta$. 
So $E_\C$ sends subobjects of $A$ to their image under the embedding $\S_\C(A) \hookrightarrow \S_\C^\delta(A) \cong Sub_{\A(\S_\C^\delta)}(A, \top)$ and we may identify $E_\C(U)$ with $U$. Equation \eqref{eq:fmM0} then comes down to
\begin{equation}
\label{eq:fmM0-2}
\exists_{E_\C(\alpha)}(\bigwedge\{U\,|\,U \in \rho\}) = \bigwedge\{\exists_{E_\C(\alpha)}(U)\,|\, U \in \rho\}. 
\end{equation}
We will show that $\exists_{E_\C(\alpha)}$ is the canonical extension of $\exists_\alpha \colon \S_\C(A) \to \S_\C(B)$ so that \eqref{eq:fmM0-2} follows from the Esakia Lemma.  
Since left adjoints are unique, it suffices to show that the pullback morphism 
$$E_\C(\alpha)^* \colon Sub_{\A(\S_\C^\delta)}(B, \top) \cong \S_C(B)^\delta \to \S_\C(A)^\delta \cong Sub_{\A(\S_\C^\delta)}(A, \top)$$ 
is the canonical extension of $\alpha^* \colon \S_C(B) \to \S_C(A)$. As both $E_\C(\alpha)^*$ and $(\alpha^*)^\delta$ are complete homomorphisms and $\S_\C(B)$ is dense in $\S_\C(B)^\delta$, it suffices to consider elements from $\S_\C(B)$. For all $V \in \S_\C(B)$,
$$
\begin{array}{rcll}
E_\C(\alpha)^*(V) &=&
\langle \id, \alpha \rangle^*(V)\\ 
&=&
\exists_{\pi_1}^{\S_\C^\delta}(\langle \id, \alpha \rangle \wedge \S_\C^\delta(\pi_2)(V))\\
&=&
\exists_{\pi_1}^{\S_\C}(\langle \id, \alpha \rangle \wedge \S_\C(\pi_2)(V))&\text{($V \in \S_\C(B)$)}\\
&=&
\alpha^*(V)\\
&=&
(\alpha^*)^\delta(V),
\end{array}
$$
where, in the second equality, we use the description of pullback morphisms in $\A(\S_\C^\delta)$ given above.
Hence $E_\C(\alpha)^* = (\alpha^*)^\delta$ and therefore $\exists_{E_\C(\alpha)} = \exists_\alpha^\delta$ and the claim follows from the Esakia lemma.\QED
\auxproof{
For the fourth equality note that
$$
\begin{array}{rcll}
\exists_{\pi_1}(\langle \id, \alpha \rangle \wedge \S_\C(\pi_2)(V))
&=&
\exists_{\pi_1}((\alpha \times \id)^*(\exists_{\Delta_B}(\top)) \wedge \pi_2^*(V)) &(\pi_2 \colon I\times J \to J)\\
&=&
\exists_{\pi_1}((\alpha \times \id)^*(\exists_{\Delta_B}(\top)) \wedge (\alpha\times\id)^*(\pi_2^*(V)))&(\pi_2 \colon J \times J \to J)\\
&=&
\exists_{\pi_1}((\alpha \times \id)^*(\exists_{\Delta_B}(\top) \wedge \pi_2^*(V)))\\
&=&
\alpha^*(\exists_{\pi_1}(\exists_{\Delta_B}(\top) \wedge \pi_2^*(V))) &\text{(BC)}\\
&=&
\alpha^*(\exists_{\pi_1}(\exists_{\Delta_B}(\top \wedge \Delta_B^*(\pi_2^*(V))))) &\text{(Frobenius)}\\
&=&
\alpha^*(V).
\end{array}
$$
}
\end{proof}

We are now ready to describe a universal property of our notion of canonical extension for coherent categories. Let $\C$ be a coherent category and $\D \in \Coh^+$. For a p-model $M \colon \C \to \D$, there exists a morphism $\widetilde{M} \colon \C^\delta \to \D$ in $\Coh^+$ such that the following diagram commutes.
$$
\xymatrix{
\C \ar[r]^-{E_\C} \ar[rd]_-{M} & \C^\delta \ar[d]^-{\widetilde{M}}\\
&\D
}
$$
The morphism $\widetilde{M}$ is unique up to a natural isomorphism. To make this precise in the next theorem we have to work in a 2-categorical setting. We first introduce some notation. For $\C \in \Coh$ and $\D \in \Coh^+$, we write $\Coh_p(\C, \D)$ for the category of p-models of $\C$ in $\D$ with natural transformations and $\Coh^+(\C^{\delta}, \D)$ for the category of morphism $\C^\delta \to \D$ in $\Coh^+$ with natural transformations. 

\begin{theorem}
\label{thm:extcohcat}
Let $\C$ be a coherent category and $\D \in \Coh^+$. Precomposition with the functor $E_\C \colon \C \to \C^{\delta}$ yields an equivalence 
$$
\mathcal{F} = \_ \after E_\C \colon \Coh^+(\C^\delta, \D) \to \Coh_p(\C, \D)
$$
in the 2-category $\cat{Cat}$, of (small) categories.
\end{theorem}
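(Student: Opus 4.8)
The plan is to prove that $\mathcal{F}$ is an equivalence of categories by verifying that it is well-defined, essentially surjective, and fully faithful. First, for well-definedness I would check that $G \after E_\C$ is again a p-model whenever $G \in \Coh^+(\C^\delta, \D)$. Since $G$ is a coherent functor it preserves existential quantifiers, and since it lies in $\Coh^+$ it preserves arbitrary meets and joins of subobjects; combining these with the fact that $E_\C$ is itself a p-model, a direct calculation pushes the meet over a prime filter $\rho$ through $G$, applies the p-model identity for $E_\C$, and pulls $G$ back out, yielding the p-model identity for $G \after E_\C$. On $2$-cells $\mathcal{F}$ is whiskering by $E_\C$, so it is a functor.

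Essential surjectivity is the heart of the argument. Given a p-model $M \colon \C \to \D$, I regard it via the adjunction $\A \dashv \S$ (Proposition~\ref{prop:adjpdlcoh}) as a hyperdoctrine morphism $\S_M = (M, \tau^M) \colon \S_\C \to \S_\D$ with $\tau^M_A = M_A \colon Sub_\C(A) \to Sub_\D(MA)$, and extend it fibrewise. Keeping the same base functor $M$, I replace each $M_A$ by its canonical extension $\overline{M_A} \colon Sub_\C(A)^\delta \to Sub_\D(MA)$ from \eqref{eq:natdl}, obtaining a candidate $(M, \overline{\tau^M}) \colon \S_\C^\delta \to \S_\D$. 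Naturality of $\overline{\tau^M}$ follows from the naturality of \eqref{eq:natdl} in both variables together with the functoriality of canonical extension on the lattice homomorphisms $\alpha^*$ and $(M\alpha)^*$. The one substantial condition is that $\overline{\tau^M}$ preserve existential quantification, i.e. $\exists_{M\alpha} \after \overline{M_A} = \overline{M_B} \after (\exists_\alpha)^\delta$ for each $\alpha \colon A \to B$. This is exactly the equivalence supplied by Proposition~\ref{prop:comjpm}, applied to the square expressing that $M$ preserves $\exists$ (with $f = \exists_\alpha$, $g = \exists_{M\alpha}$, $h_1 = M_A$, $h_2 = M_B$): its first condition is precisely the defining p-model identity of $M$. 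I then set $\widetilde{M} = \epsilon_\D \after \A(M, \overline{\tau^M})$, post-composing with the counit equivalence of Proposition~\ref{prop:adjpdlcoh}. The functor $\widetilde{M}$ is coherent (being $\A$ of a hyperdoctrine morphism) and lies in $\Coh^+$, since each $\overline{M_A}$ is a complete lattice homomorphism and hence preserves all meets and joins of subobjects. Finally $\widetilde{M} \after E_\C \cong M$, because $\overline{M_A} \after e_{Sub_\C(A)} = M_A$ shows that $\overline{\tau^M} \after \eta^{\S_\C}$ recovers $\tau^M$, so $\A(M,\overline{\tau^M}) \after \A(\id,\eta^{\S_\C}) = \A(\S_M)$ up to the equivalence $\C \simeq \A(\S_\C)$.

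For fully faithfulness I would exploit the denseness built into the construction: every object of $\C^\delta = \A(\S_\C^\delta)$ is isomorphic to a subobject $(A,u) \hookrightarrow E_\C(A) = (A,\top)$ with $u \in Sub_\C(A)^\delta$, and by denseness $u$ is a meet of joins of subobjects in the image of $E_\C$. Since every $G \in \Coh^+(\C^\delta, \D)$ preserves all meets and joins of subobjects, $G(A,u)$ is determined up to canonical isomorphism by the restriction $G E_\C = \mathcal{F}(G)$, and likewise on morphisms. For faithfulness, a $2$-cell $\sigma \colon G \Rightarrow G'$ is pinned down on each $(A,\top)$ by $\sigma E_\C$, and then, by naturality along the subobject inclusions and preservation of meets and joins, on every $(A,u)$; hence $\sigma E_\C = \sigma' E_\C$ forces $\sigma = \sigma'$. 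For fullness, given $\theta \colon G E_\C \Rightarrow G' E_\C$ I set $\sigma_{(A,\top)} = \theta_A$ and extend to each $(A,u)$ by restricting $\theta_A$ along the inclusion, using the density presentation of $u$ and naturality of $\theta$ on the generating subobjects to see that the restriction lands in $G'(A,u)$ and is natural, producing the unique $\sigma$ with $\mathcal{F}(\sigma) = \theta$.

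I expect the main obstacle to be the existential-quantifier condition in the essential-surjectivity step: showing that the fibrewise canonical extension $\overline{M}$ of a p-model again commutes with the left adjoints $\exists_\alpha$. This is precisely where the p-model hypothesis is indispensable, and it is delivered by Proposition~\ref{prop:comjpm} (resting on the Esakia Lemma). A secondary technical point is the bookkeeping in the fully faithful step, namely that the restriction of $\theta_A$ along $(A,u) \hookrightarrow (A,\top)$ is well-defined and natural; this relies on the density of $Sub_\C(A)$ in $Sub_\C(A)^\delta$ together with the preservation of arbitrary meets and joins by the $\Coh^+$-morphisms.
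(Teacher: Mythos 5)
Your proposal is correct, and its essential-surjectivity core is exactly the paper's construction: starting from a p-model $M$, you extend the fibrewise restriction maps $M_A$ to complete lattice homomorphisms $\overline{M_A}$ via \eqref{eq:natdl}, invoke Proposition~\ref{prop:comjpm} (whose first condition is the p-model identity) to get preservation of $\exists$, and set $\widetilde{M} = \epsilon_\D \after \A(M,\overline{\tau^M})$ --- this is word-for-word the paper's functor $\mathcal{G}$ on objects. Where you diverge is in completing the equivalence: the paper constructs $\mathcal{G}$ as a full quasi-inverse (leaving its action on 2-cells and the isomorphisms $\mathcal{F} \after \mathcal{G} = \id$, $\id \cong \mathcal{G}\after\mathcal{F}$ to the reader), whereas you prove full faithfulness of $\mathcal{F}$ directly from the density of $Sub_\C(A)$ in $Sub_\C(A)^\delta$ and preservation of arbitrary meets and joins by $\Coh^+$-morphisms. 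Your route buys a cleaner uniqueness statement (faithfulness is immediate since each $\sigma_{(A,u)}$ is pinned down by $\sigma_{(A,\top)}$ through the mono $G'(A,u) \hookrightarrow G'(A,\top)$) and you also supply a check the paper silently assumes, namely that $G \after E_\C$ is again a p-model, which your push-pull computation through $G$ handles correctly. One point in your fullness step deserves more care than ``naturality on the generating subobjects'': morphisms $(A,u) \to (B,v)$ in $\C^\delta$ are arbitrary functional relations in $\S_\C^\delta(A\times B)$, most of which are neither in the image of $E_\C$ nor subobject inclusions, so density alone does not give naturality of the extended $\sigma$. It does go through, but via the graph factorisation: any $f \colon (A,u) \to (B,v)$ equals $q \after p^{-1}$, where $p \colon (A\times B, f) \to (A,u)$ and $q \colon (A\times B, f) \to (B,v)$ are the restrictions of $E_\C(\pi_1)$, $E_\C(\pi_2)$ along the inclusion $(A\times B,f) \hookrightarrow E_\C(A\times B)$ (with $p$ invertible by the functional-relation axioms); naturality of $\sigma$ at inclusions and at these restricted projections, checked after composing with the relevant monos and using naturality of $\theta$ at $\pi_1, \pi_2$, then yields naturality at $f$. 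This is fixable bookkeeping rather than a gap in the idea, and with it your argument is a complete, slightly more explicit proof than the paper's.
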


\begin{proof}
We define a functor 
$$
\mathcal{G} \colon \Coh_p(\C, \D) \to \Coh^+(\C^\delta, \D).
$$
Let $M\colon \C \to \D$ be a p-model. We rely on Proposition~\ref{prop:adjpdlcoh} and the natural isomorphism in \eqref{eq:natdl} to define a morphism $\mathcal{G}(M) \colon \C^\delta \to \D$. Consider $\S_M \colon \S_\C \to \S_\D$. Recall that $\S_M = (M, \tau)$, where, for $A \in \C$, $\tau_A \colon \S_\C(A) \to \S_\D(M(A))$ is the restriction of $M$ to $\S_\C(A)$. Consider $(M, \overline{\tau}) \colon \S_\C^\delta \to \S_\D$, where, for $A \in \C$, $\overline{\tau}_A = \overline{\tau_A} \colon \S_\C(A)^\delta \to \S_\D(M(A))$, the unique extension of $\tau_A$ to a complete lattice homomorphism. We first show that $(M, \overline{\tau})$ is a morphism of coherent hyperdoctrines. Clearly $M$ preserves limits (as it is a coherent functor) and, for each $A \in \C$, $\overline{\tau}_A$ is a (complete) lattice homomorphism.
To prove naturality of $\overline{\tau}$, let $A \xrightarrow{\alpha} B$ in $\C$. Then
$$
\begin{array}{rcll}
\overline{\tau}_A \after \S_\C(\alpha)^\delta 
&=&
\overline{\tau_A \after \S_\C(\alpha)} &\text{(naturality of $\overline{(\_)}$)}\\
&=&
\overline{\S_\D (M(\alpha)) \circ \tau_B} &\text{(naturality of $\tau$)}\\
&=&
\S_\D(M(\alpha)) \circ \overline{\tau}_B &\text{(naturality of $\overline{(\_)}$)}.
\end{array}
$$

Finally, to prove that $\overline{\tau}$ preserves existential quantification, let $A \xrightarrow{\alpha} B$ in $\C$ and consider
$$
\xymatrix{
\S_\C(A) \ar[r]^-{\exists_\alpha} \ar[d]_-{\tau_A} & \S_\C(B) \ar[d]^-{\tau_B}\\
\S_\D(MA) \ar[r]_-{\exists_{M\alpha}} & \S_\D(MB)
}
$$
As $M$ is a p-model we have, for every prime filter $\rho$ in $Sub_\C(A)$,
$$
\exists_{M(\alpha)}(\bigwedge\{M(U)\,|\,U \in \rho\}) = \bigwedge\{\exists_{M(\alpha)}(M(U))\,|\, U \in \rho\}.
$$
Hence, Proposition~\ref{prop:comjpm} applies and we may conclude $\overline{\tau}_B \after (\exists_{\alpha})^\delta = \exists_{M\alpha} \after \overline{\tau}_A$.

We now define $\mathcal{G}(M) \colon \A(\S_{\C}^\delta) \to \D$ to be the composite
$$
\A(\S_{\C}^\delta) \xrightarrow{\A(M, \overline{\tau})} \A(\S(\D)) \xrightarrow{\epsilon_\D} \D,
$$
where $\epsilon_\D$ is the counit of the adjunction described in Proposition~\ref{prop:adjpdlcoh}.
As, for each $A \in \C$, $\overline\tau_A$ is a complete lattice homomorphism, it follows that $\mathcal{G}(M)$ preserves arbitrary meets and joins of subobjects. Hence, it is a morphism in $\Coh^+$. We leave it to the reader to define the action of $\mathcal{G}$ on morphisms. One may show that $\mathcal{F} \after \mathcal{G} = \id$ and there is a natural isomorphism $\id \to \mathcal{G} \after \mathcal{F}$. Hence, $\mathcal{F}$ is a 2-equivalence.\QED
\end{proof}

It follows that the canonical extension of a coherent category is determined uniquely, up to equivalence in the 2-category $\Coh^+$, by the universal property described in the theorem above.
Remark that, for a distributive lattice $\L$ and $K \in \DL^+$ (both viewed as categories), a p-model $\L \to \K$ is just a lattice homomorphism and the above theorem generalises Theorem~\ref{thm:extlat}.

\section{Canonical extension for Heyting categories}
\label{sec:Heyt}
For a distributive lattice $\L$, its canonical extension is complete and completely distributive, hence it is in particular a Heyting algebra. In case $\L$ itself is already a Heyting algebra, the embedding $e_\L \colon \L \to \L^\delta$ preseves the Heyting implication. Furthermore, the canonical extension of a morphism of Heyting algebras is again a morphism of Heyting algebras. Hence, canonical extension yields a functor on the category of Heyting algebras. In this section we explain how these results lift to the categorical setting. For more background on canonical extension in the setting of Heyting algebras, see \cite{Geh2012}. 

The categorical analogue of a Heyting algebra is a \emph{Heyting category}, {\it i.e.}, a coherent category $\C$ such that, for all $A \xrightarrow{\alpha} B$ in $\C$, the pullback functor $\alpha^* \colon Sub_\C(B) \to Sub_\C(A)$ has a right adjoint $\forall_\alpha$, called \emph{universal quantification along $\alpha$}. A morphism of Heyting categories is a coherent functor which also preserves universal quantification. We write $\Heyt$ for the category of Heyting categories. Heyting categories provide sound and complete semantics for intuitionistic first order logic.

Note that in a Heyting category all subobject lattices are Heyting algebras. For let $\C$ be a Heyting category, $A \in \C$ and $m \colon U \hookrightarrow A \in Sub_\C$. The pullback functor $m^* \colon Sub_\C(A) \to Sub_\C(U)$ is given by $m^*(V) = U \wedge V$. It follows that, for $W \in Sub_\C(A)$, $U \to W = \forall_m(m^*(W))$. Morphisms between Heyting categories preserve this implication.

For a coherent category $\C$, the pullback functors in $\C^\delta$ are complete homomorphisms and therefore they have right adjoints. Hence, the canonical extension of a coherent category is a Heyting category.

To study the properties of the canonical extension of Heyting categories, we rely on the 2-adjunction between coherent categories and coherent hyperdoctrines of Proposition~\ref{prop:adjpdlcoh}. This 2-adjunction restricts to a 2-adjunction between Heyting categories and first order hyperdoctrines.

\begin{definition}
A \emph{first order hyperdoctrine} is a coherent hyperdoctrine $P \colon \B^{\op} \to \DL$ such that, for all $A \in \B$, $P(A)$ is a Heyting algebra and, for all $A \xrightarrow{\alpha} B$, $P(\alpha) \colon P(B) \to P(A)$ has a right adjoint $\forall_\alpha$. A morphism between first order hyperdoctrines is a morphism of coherent hyperdoctrines which, in addition, preserves implication and universal quantification. We write $\pHA$ for the category of first order hyperdoctrines.  
\end{definition}

Remark that the fact that a first order hyperdoctrine $P \colon \B^{\op} \to \DL$ satisfies Frobenius implies that, for all $A \xrightarrow{\alpha} B$ in $\B$, $P(\alpha) \colon P(B) \to P(A)$ preserves the Heyting implication. Hence, a first order hyperdoctrine may be viewed as a functor $\B^{\op} \to \cat{HA}$. 

\auxproof{
Let $A \xrightarrow{\alpha} B$ in $\B$ and $a, b \in P(B)$. We have to show $P(\alpha)(a \to b) = P(\alpha)(a) \to P(\alpha)(b)$. First note that
$$
P(\alpha)(a \to b) \le P(\alpha)(a) \to P(\alpha)(b) \quad\text{iff}\quad P(\alpha)(a) \wedge P(\alpha)(a \to b) \le P(\alpha)(b).
$$
As $P(\alpha)$ preserves meets, the right inequality clearly holds.

To prove the converse inquality we apply the adjunction $\exists_{\alpha} \dashv P(\alpha)$:
$$
\begin{array}{rcl}
P(\alpha)(a) \to P(\alpha)(b) \le P(\alpha)(a \to b) &\text{iff}&
\exists_{\alpha}(P(\alpha)(a) \to P(\alpha)(b)) \le a \to b\\
&\text{iff}&
a \wedge \exists_{\alpha}(P(\alpha)(a) \to P(\alpha)(b)) \le b\\
&\text{iff}&
\exists_{\alpha}(P(\alpha)(a) \wedge (P(\alpha)(a) \to P(\alpha)(b))) \le b.
\end{array}
$$
The claim now follows from the fact that $\exists_{\alpha} \after P(\alpha) \le \id$.
}

\begin{proposition}
The 2-adjunction $\A \colon \pDL \leftrightarrows \Coh \colon \S$ of Proposition~\ref{prop:adjpdlcoh} restricts to a 2-adjunction $\A \colon \pHA \leftrightarrows \Heyt \colon \S$.
\end{proposition}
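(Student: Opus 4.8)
The plan is to verify that both $\S$ and $\A$ corestrict to 2-functors between $\pHA$ and $\Heyt$, and that the unit and counit of the 2-adjunction $\A \dashv \S$ of Proposition~\ref{prop:adjpdlcoh} have components lying in these subcategories. Since the inclusions $\Heyt \hookrightarrow \Coh$ and $\pHA \hookrightarrow \pDL$ are faithful (though non-full) and leave the 2-cells unchanged, naturality of the restricted unit and counit and the (quasi) triangle identities are then inherited from the ambient 2-adjunction, so the restricted data assemble into a 2-adjunction $\A \colon \pHA \leftrightarrows \Heyt \colon \S$ of the same form as in Proposition~\ref{prop:adjpdlcoh}.

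First I would check that $\S$ corestricts. For $\C \in \Heyt$, each $Sub_\C(A)$ is a Heyting algebra and, by definition of a Heyting category, each pullback functor $\alpha^* = Sub_\C(\alpha)$ has a right adjoint $\forall_\alpha$; hence $\S_\C$ is a first order hyperdoctrine. If $F \colon \C \to \D$ is a morphism of Heyting categories, then $\S(F) = (F, \tau^F)$ preserves universal quantification because $F$ does, and it preserves implication since $U \to W = \forall_m(m^*(W))$ and $F$ preserves both $\forall$ and pullback; thus $\S(F)$ is a morphism in $\pHA$.

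The main work is to show that $\A$ corestricts. Let $P \in \pHA$. For an object $(A,a)$ of $\A(P)$ the lattice $Sub_{\A(P)}(A,a)$ is isomorphic to the downset $\downarrow\! a$ in $P(A)$, which, being a downset of a Heyting algebra, is again a Heyting algebra; so all subobject lattices of $\A(P)$ are Heyting algebras. To obtain right adjoints to all pullback functors I would use the graph factorization: every morphism $f \colon (A,a) \to (B,b)$ of $\A(P)$ factors as its graph $\Gamma_f \colon (A,a) \to (A,a)\times(B,b)$, a split mono, followed by the product projection $\pi \colon (A,a)\times(B,b) \to (B,b)$, so that $f^* = \Gamma_f^* \after \pi^*$. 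Pullback along the mono $\Gamma_f$ has a right adjoint because the subobject lattices are Heyting algebras, and pullback along $\pi$ has a right adjoint induced by the universal quantification $\forall_{\pi_2}$ that exists in $P$ (with $\pi_2 \colon A \times B \to B$ the underlying projection in $\B$) since $P$ is a first order hyperdoctrine. Composing these right adjoints yields $\forall_f$, so $\A(P)$ is a Heyting category. For a morphism $(K,\tau)$ of first order hyperdoctrines, the functor $\A(K,\tau)$ preserves universal quantification because $\forall_f$ is assembled from the operations $\forall_{\pi_2}$ and $\to$ of $P$, both of which $(K,\tau)$ preserves by assumption; hence $\A(K,\tau)$ is a morphism in $\Heyt$.

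Finally I would verify that the unit and counit restrict. The counit $\epsilon_\C \colon \A(\S_\C) \to \C$ is an equivalence of coherent categories, and equivalences preserve all adjoints; in particular $\epsilon_\C$ preserves universal quantification, so it is a morphism in $\Heyt$. For the unit $\eta_P \colon P \to \S(\A(P))$, the component $\tau_A$ identifies $P(A)$ with $Sub_{\A(P)}(A, \top) \cong \,\downarrow\!\top = P(A)$, and under this identification the first order structure of $\S(\A(P))$ is exactly the structure $(\forall_{\pi_2}, \to)$ out of which the Heyting structure of $\A(P)$ was built; hence $\eta_P$ preserves $\forall$ and $\to$ and is a morphism in $\pHA$. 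The hardest step is the construction of universal quantification in $\A(P)$ together with its preservation by $\A(K,\tau)$, since this is where the non-fullness of the subcategories and the interaction between the universal quantifier and the Heyting implication of $P$ must be handled carefully; once this is in place, the remaining verifications are formal and the restricted 2-adjunction follows.
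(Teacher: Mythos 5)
Your proposal is correct and takes essentially the same approach as the paper: the check that $\S$ corestricts is identical (including the computation $F(U \to W) = F(\forall_m(m^*(W))) = F(U) \to F(W)$), and your graph factorization $f = \pi \after \Gamma_f$ merely derives the explicit formula $\forall_f(w) = \forall_{\pi_2}^P(f \to P(\pi_1)(w))$ that the paper writes down directly, while your preservation argument for $\A(K,\tau)$ --- that $\forall_f$ is assembled from $\forall_{\pi_2}$, $\to$ and naturality of $\tau$, each of which is preserved --- is exactly the paper's displayed five-line computation in words. Your additional verification that the unit and counit restrict to $\pHA$ and $\Heyt$ is a step the paper leaves implicit, and you handle it correctly (the counit is an equivalence in $\Coh$, hence automatically Heyting, and for the unit the correspondence of pullback functors plus uniqueness of adjoints does the rest).
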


\begin{proof}
It is clear that, for a Heyting category $\C$, $\S_\C$ is a first order hyperdoctrine. Let $F \colon \C \to \D$ be a morphism of Heyting categories. We have to show that, for $A \in \C$, the restriction of $F$ to $\S_\C(A) \to \S_\D(FA)$ preserves the implication. For $U, W \in \S_\C(A)$, where $U$ may be represented by $m \colon U \hookrightarrow A$,
$$
F(U \to W)
=
F(\forall_m(m^*(W))) 
=
\forall_{F(m)}(F(m)^*(F(W))) = F(U) \to F(W),
$$
as required. It is clear that $\S_F$ preserves universal quantification, whence it is a morphism in $\pHA$.

Conversely, let $P$ be a first order hyperdoctrine. For a morphism $(A, u) \xrightarrow{f} (B,v)$ in $\A(P)$, the pullback functor has a right adjoint given by, for $w \in \,\downarrow_{P(A)}\! u \,\cong\, Sub_{\A(P)}(A,u)$,
$$
\forall_f(w) = \forall_{\pi_2}^P(f \to P(\pi_1)(w)).
$$
So $\A(P)$ is a Heyting category. Let $(K, \tau) \colon P_1 \to P_2$ be a morphism in $\pHA$ and $(A, u) \xrightarrow{f} (B,v)$ in $\A(P)$. We have to show 
$$
\tau_B \after \forall_f = \forall_{\tau_{A\times B}(f)} \after \tau_A \colon Sub_{\A(P)}(A, u) \to Sub_{\A(Q)}(KA, \tau_B(v)).
$$ 
Let $w \in \,\downarrow_{P(A)} u \cong Sub_{\A(P)}(A,u)$. Then
$$
\begin{array}{rcll}
\tau_B(\forall_f(w))
&=&
\tau_B(\forall_{\pi_2}^{P_1}(f \to P_1(\pi_1)(w)))\\
&=&
\forall_{K\pi_2}^{P_2}(\tau_{A\times B}(f \to P_1(\pi_1)(w)))\\
&=&
\forall_{K\pi_2}^{P_2}(\tau_{A\times B}(f) \to \tau_{A\times B}(P_1(\pi_1)(w)))\\
&=&
\forall_{K\pi_2}^{P_2}(\tau_{A\times B}(f) \to P_2(\pi_1)(\tau_A(w)))\\
&=&
\forall_{\tau_{A \times B}(f)}(\tau_A(w)).
\end{array}
$$
Hence, $\A(K, \tau)$ is a morphism of Heyting categories, which completes the proof.\QED
\end{proof}

We now show that canonical extension preserves the Heyting structure.

\begin{proposition}
Canonical extension of coherent hyperdoctrines $(\_)^\delta \colon \pDL \to \pDL$ restricts to a 2-functor on the 2-category of first order hyperdoctrines. Furthermore, for a first order hyperdoctrine $P$, the embedding $(\id, \eta^P) \colon P \to P^\delta$ preserves the Heyting structure. 
\end{proposition}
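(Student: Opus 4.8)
The plan is to deduce both statements from the results on canonical extension of maps recorded above, applied fibrewise. Fix a first order hyperdoctrine $P \colon \B^{\op} \to \DL$; by the earlier proposition $P^\delta = \CE \after P$ is a coherent hyperdoctrine, so it remains to supply and preserve the Heyting structure. For each $A \in \B$ the fibre $P^\delta(A) = P(A)^\delta$ is completely distributive and algebraic, hence a complete Heyting algebra. For $A \xrightarrow{\alpha} B$ the reindexing map $P^\delta(\alpha) = P(\alpha)^\delta$ is a complete lattice homomorphism, so it preserves all joins and therefore has a right adjoint $\forall_\alpha^{P^\delta}$; thus $P^\delta$ is a first order hyperdoctrine. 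Since $P(\alpha) \dashv \forall_\alpha^P$ and adjunctions are preserved under canonical extension (Proposition 3.6 in \cite{DGP}), we have $P(\alpha)^\delta \dashv (\forall_\alpha^P)^\delta$, and uniqueness of adjoints gives the key identity $\forall_\alpha^{P^\delta} = (\forall_\alpha^P)^\delta$. Because $(\forall_\alpha^P)^\delta$ extends $\forall_\alpha^P$, this identity already yields $\forall_\alpha^{P^\delta} \after \eta_A^P = \eta_B^P \after \forall_\alpha^P$, i.e. $(\id, \eta^P)$ preserves universal quantification.

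To see that $\eta^P_A$ preserves implication, fix $A$ and abbreviate $\L = P(A)$. For $a \in \L$ the map $- \wedge a \colon \L \to \L$ preserves finite joins by distributivity, so Proposition~\ref{prop:prescomp} identifies its (unique) extension with $- \wedge \eta_A^P(a) \colon \L^\delta \to \L^\delta$, the meet on $\L^\delta$ being the extension of the meet on $\L$. Its right adjoint in the complete Heyting algebra $\L^\delta$ is $\eta_A^P(a) \to -$, whereas $a \to -$ is the right adjoint of $- \wedge a$ in $\L$; as adjunctions are preserved under canonical extension, $(a \to -)^\delta = \eta_A^P(a) \to -$. Evaluating at $\eta_A^P(b)$ and using that $(a \to -)^\delta$ extends $a \to -$ gives $\eta_A^P(a \to b) = \eta_A^P(a) \to \eta_A^P(b)$. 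As $a$ and $b$ are arbitrary, $\eta_A^P$ preserves implication, so $(\id, \eta^P)$ preserves the full Heyting structure.

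For functoriality on morphisms, let $(K, \tau) \colon P_1 \to P_2$ be a morphism in $\pHA$; we must show each $\tau_A^\delta = (\tau_A)^\delta$ preserves universal quantification and implication (the action on $2$-cells is inherited from $\pDL$). Preservation of $\forall$ follows by applying $(\_)^\delta$ to the defining equation $\tau_B \after \forall_\alpha^{P_1} = \forall_{K\alpha}^{P_2} \after \tau_A$. The outer map of each composite, $\tau_B$ respectively $\forall_{K\alpha}^{P_2}$, preserves finite meets, so the $\pi$-form of Proposition~\ref{prop:prescomp} distributes $(\_)^\delta$ over both composites; together with $(\forall_\alpha^{P_i})^\delta = \forall_\alpha^{P_i^\delta}$ from the first paragraph this yields $\tau_B^\delta \after \forall_\alpha^{P_1^\delta} = \forall_{K\alpha}^{P_2^\delta} \after \tau_A^\delta$, as required.

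The remaining and hardest point is that $\tau_A^\delta$ preserves implication. Here $\tau_A \colon P_1(A) \to P_2(K(A))$ is a homomorphism of Heyting algebras, so the claim is the fibrewise fact that canonical extension takes a Heyting homomorphism to a complete Heyting homomorphism. Writing $\L = P_1(A)$ and $\K = P_2(K(A))$, I would prove it by first identifying the Heyting implication of $\L^\delta$ with the $\pi$-extension $(\to^\L)^\pi \colon (\L^\delta)^{\op} \times \L^\delta \to \L^\delta$ of the implication of $\L$, and then applying the mixed-variance $\pi$-form of Proposition~\ref{prop:prescomp} to the square $\tau_A \after {\to^\L} = {\to^\K} \after (\tau_A^{\op} \times \tau_A)$, whose outer map $\to^\K$ preserves finite meets in each coordinate (turning joins into meets in the contravariant coordinate). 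The genuine obstacle is the first ingredient: in contrast to the join-preserving operators handled by the Esakia Lemma, implication is antitone in its first argument and does not commute with the filtered meets computing filter elements, so showing that $(\to^\L)^\pi$ coincides with the Heyting implication of $\L^\delta$ needs the explicit description of implication on filter and ideal elements; this is exactly what is established in \cite{Geh2012}, which I would cite.
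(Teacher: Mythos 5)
Your proposal is correct and takes essentially the same route as the paper: both arguments proceed fibrewise, using preservation of adjunctions under canonical extension to obtain the first order structure on $P^\delta$ (your explicit identity $\forall_\alpha^{P^\delta} = (\forall_\alpha^P)^\delta$), Proposition~\ref{prop:prescomp} applied to $\tau_B \after \forall_\alpha^{P_1} = \forall_{K\alpha}^{P_2} \after \tau_A$ for preservation of universal quantification, and the known facts about canonical extensions of Heyting algebras and their homomorphisms from \cite{Geh2012} for the implication. The extra details you supply --- the direct adjunction argument that $\eta^P_A$ preserves implication, and the outline identifying the implication on $P(A)^\delta$ with the $\pi$-extension of the implication on $P(A)$ --- merely fill in steps the paper defers to the literature or declares \emph{readily checked}.
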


\begin{proof}
As the canonical extension of a Heyting algebra is again a Heyting algebra and canonical extension preserves adjunctions, it readily follows that the canonical extension $P^\delta$ of a first order hyperdoctrine $P$ is again a first order hyperdoctrine.

Now let $(K, \tau) \colon P_1 \to P_2$ be a morphism of first order hyperdoctrines over $\B_1$ and $\B_2$ respectively. As, for each $A \in \B_1$, $\tau_A$ preserves implication, so does $\tau_A^\delta$. It is left to show that $\tau^\delta$ preserves universal quantification. Let $A \xrightarrow{\alpha} B$ in $\B_1$. We have to show that $\tau_B^\delta \after \forall_{\alpha}^\delta = \forall_{K\alpha}^\delta \after \tau_A^\delta$. This follows from Proposition~\ref{prop:prescomp} and the fact that $\tau_B \after \forall_{\alpha} = \forall_{K\alpha} \after \tau_A$. Hence, $(K, \tau^\delta)$ is a morphism of first order hyperdoctrines.

It is readily checked that $(\id, \eta^P) \colon P \to P^\delta$ is a morphism of first order hyperdoctrines.\QED 
\end{proof}

From the above two propositions we deduce the following. 

\begin{corollary}
The canonical extension functor $(\_)^\delta \colon \Coh \to \Coh$ restricts to a 2-functor on the 2-category of Heyting categories. Furthermore, for a Heyting category $\C$, $E_\C \colon \C \to \C^\delta$ is a morphism of Heyting categories.
\end{corollary}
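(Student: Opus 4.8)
The plan is to deduce this corollary formally from the two preceding propositions, exploiting the fact that the canonical-extension $2$-functor on $\Coh$ factors as $\C \mapsto \A(\S_\C^\delta)$, i.e.\ as the composite of $\S \colon \Coh \to \pDL$, the canonical extension $(\_)^\delta \colon \pDL \to \pDL$, and $\A \colon \pDL \to \Coh$. First I would verify the object part by a three-step chase along this factorisation. If $\C$ is a Heyting category, then by the proposition restricting the $2$-adjunction to $\A \colon \pHA \leftrightarrows \Heyt \colon \S$, the hyperdoctrine $\S_\C$ lies in $\pHA$; by the proposition immediately above, $\S_\C^\delta$ again lies in $\pHA$; and applying $\A$, which carries $\pHA$ into $\Heyt$ by the same restricted adjunction, we conclude that $\C^\delta = \A(\S_\C^\delta)$ is a Heyting category.

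Next I would treat morphisms and $2$-cells by the identical three-step chase: for $F \colon \C \to \D$ in $\Heyt$ the morphism $\S_F$ lies in $\pHA$, hence so does $\S_F^\delta$ by the preceding proposition, and therefore $F^\delta = \A(\S_F^\delta)$ is a morphism of Heyting categories. Since each of $\S$, $(\_)^\delta$ and $\A$ is already established to be a $2$-functor on the relevant ($\pHA$/$\Heyt$) $2$-categories, $2$-functoriality of the restriction is inherited with no further computation. This establishes that $(\_)^\delta$ restricts to a $2$-functor $\Heyt \to \Heyt$.

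For the second assertion I would decompose $E_\C$ according to its definition as
\[
\C \xrightarrow{\ \sim\ } \A(\S_\C) \xrightarrow{\ \A(\id,\,\eta^{\S_\C})\ } \A(\S_\C^\delta) = \C^\delta,
\]
and show that each factor is a morphism of Heyting categories. The second factor is $\A$ applied to $(\id, \eta^{\S_\C}) \colon \S_\C \to \S_\C^\delta$, which is a morphism in $\pHA$ by the preceding proposition (it preserves the Heyting structure), so $\A(\id, \eta^{\S_\C})$ is a Heyting morphism. The first factor is the equivalence $\C \xrightarrow{\sim} \A(\S_\C)$, a pseudo-inverse of the counit equivalence $\epsilon_\C$ of Proposition~\ref{prop:adjpdlcoh}; granting that this too preserves the Heyting structure, $E_\C$ is a composite of morphisms of Heyting categories and hence is itself one.

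The only point that is not pure bookkeeping — and thus the step I would flag as the main thing to check — is that the equivalence $\C \xrightarrow{\sim} \A(\S_\C)$ preserves universal quantification. Here I would record the general observation that any equivalence of coherent categories between two Heyting categories is automatically a morphism of Heyting categories: an equivalence preserves the pullback functors on subobjects together with all their adjoints, so in particular it carries each right adjoint $\forall_\alpha$ to the corresponding right adjoint of the image pullback functor. With that remark in hand, everything else is the routine assembly of the three $2$-functors $\S$, $(\_)^\delta$ and $\A$ restricted to their Heyting counterparts.
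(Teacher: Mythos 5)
Your proof is correct and follows essentially the same route the paper intends: the paper deduces the corollary directly from the two preceding propositions via the factorisation $\C^\delta = \A(\S_\C^\delta)$ through $\pHA$ and the decomposition of $E_\C$ as $\C \xrightarrow{\sim} \A(\S_\C) \xrightarrow{\A(\id,\,\eta^{\S_\C})} \A(\S_\C^\delta)$, exactly as you do. You additionally make explicit the one point the paper leaves tacit — that the coherent equivalence $\C \xrightarrow{\sim} \A(\S_\C)$ automatically preserves universal quantification, since an equivalence carries the right adjoints $\forall_\alpha$ to the corresponding right adjoints — and your justification of that step is sound.
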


\section{Topos of types}
\label{sec:tt}
In \cite{Mak} Makkai defines, for a coherent category $\C$, its topos of types $T(\C)$. He applies this construction to show the existence of full embeddings of certain coherent toposes into functor categories. Furthermore, he views the topos of types as a conceptual tool which enables one to formulate precisely certain natural intuitive questions from model theory. 

For a coherent category $\C$, the coherent hyperdoctrine $\S_\C^\delta$, as defined in the previous section, is an internal locale in $\Set^{\C^{op}}$ (actually even in $Sh(\C, J_{coh})$, the category of sheaves over $\C$ with the coherent topology). In this section we prove that $T(\C)$ is equivalent to the topos of sheaves over this internal locale $\S_\C^\delta$ (Theorem~\ref{thm:inttt} and Theorem~\ref{thm:shjcoh}). We apply our alternative description in Section~\ref{sec:proptt}, where we study some properties of the topos of types construction.  

Let $\C$ be a coherent category. To define the topos of types $T(\C$), Makkai starts from the \emph{category of filters} $\Lambda\C$ of $\C$.  The objects of $\Lambda\C$ are pairs $(A,F)$, where $A \in \C$ and $F$ is a filter in $Sub_\C(A)$. A morphism $(A, F) \to (B,G)$ is a germ $[\alpha]$ of so-called `local continuous maps'. A local continuous map $\alpha \colon (A,F) \to (B,G)$ is a morphism $U \xrightarrow{\alpha} B$ in $\C$, where $U \in F$, such that, for all $V \in G$, $\alpha^*(V) \in F$. Two such maps $\alpha_1 \colon U_1 \to B$, $\alpha_2 \colon U_2 \to B$ are equivalent if and only if there exists $U \in F$ with $U \le U_1 \wedge U_2$ and $\alpha_1\!\upharpoonright U = \alpha_2\!\upharpoonright U$.    

Using the adjunction $\A \colon \pDL \leftrightarrows \Coh \colon \S$ of Proposition \ref{prop:adjpdlcoh}, one may give an alternative description of the category of filters of $\C$, relating it to the construction of the lattice of filters of a distributive lattice. Let $Fl \colon \DL \to \DL$ be the functor which sends a distributive lattice $\L$ to the lattice $Fl(\L)$ of filters of $\L$ ordered by reverse inclusion. For a morphism $f \colon \L \to \K$, $Fl(f) \colon Fl(\L) \to Fl(\K)$, sends a filter $F$ in $\L$ to the filter in $\K$ generated by the direct image of $F$, {\it i.e.},
$$
Fl(f)(F) = \,\uparrow\!\!\{f(a)\,|\, a \in F\}.
$$
For a coherent hyperdoctrine $P$, $Fl \after P$ is again a coherent hyperdoctrine \cite{Pi83}. One may show that, for a coherent category $\C$, the category of filters $\Lambda\C$ is isomorphic to $\A(Fl \after \S_\C)$. 
The category of filters seems to have appeared first in \cite{KoRe1970}. More information on this construction may be found in \cite{Bla1977, Bu04}.

For a coherent category $\C$, its category of filters $\Lambda\C$ is again a coherent category. For a morphism $(A, F) \xrightarrow{[\alpha]} (B, G)$ in $\Lambda\C$, its image may be described as $(B, \exists_{[\alpha]}F)$, where
$$
\begin{array}{rcl}
\exists_{[\alpha]}F
&=&
\{V \in Sub_{\C}(B)\,|\,\alpha^*(V) \in F\}\\
&=&
\uparrow\!\{\exists_{\alpha} (W \wedge dom(\alpha))\,|\,W \in F\}.
\end{array}
$$
Every coherent category carries a natural Grothendieck topology which is defined as follows.

\begin{definition}
Let $\C$ be a coherent category and $A \in \C$. A sieve $S$ on $A$ is covering in the \emph{coherent topology} $J_{coh}$ on $\C$ iff there exists a finite subset $\{A_i \xrightarrow{\alpha_i} A \, | \, 1 \le i \le n\}$ of $S$ s.t.
$$
A = \bigvee \{\exists_{\alpha_i} A_i\,|\, 1 \le i \le n\},
$$ 
\it{i.e.}, $J_{coh}$ is the topology generated by finite joins and images.
\end{definition}

Makkai defines the topos of types as follows. 
\begin{definition}
Let $\C$ be a coherent category. The \emph{topos of types} $T(\C)$ of $\C$ is the topos of sheaves over $(\tau\C, J_p)$. Here $\tau\C$ is the full subcategory of the category of filters $\Lambda \C$ consisting of all pairs $(A, \rho)$, where $\rho$ is a prime filter in $Sub_\C(A)$, and $J_p$ is the topology induced by the coherent topology on $\Lambda\C$.
\end{definition}

For a coherent category $\C$, the topology $J_p$ on $\tau\C$ is the topology generated by the singleton covers. To see this, note that, in $\Lambda \C$, a sieve $S$ on $(A,F)$ is covering in the coherent topology if and only if there exists a finite subset $\{(A_i, F_i) \xrightarrow{[\alpha_i]} (A,F) \,|\, 1\le i \le n\}$ of $S$ such that, for all $U_1 \in F_1, \ldots, U_n \in F_n$, 
$$
\bigvee\{\exists_{\alpha_i}U_i\,|\,1 \le i \le n\} \in F.
$$
It follows that, in case $F$ is a prime filter in $Sub_\C(A)$, a sieve $S$ on $(A,F)$ is covering if and only if there is morphism $(A', F') \xrightarrow{[\alpha]} (A,F)$ in $S$ whose image is $(A,F)$. Hence, the topology on $\tau\C$, induced by the coherent topology on $\Lambda\C$, is the topology generated by the singleton covers.

For distributive lattices, the topos of types construction essentially yields their canonical extension.

\begin{proposition}
Let $\L$ be a distributive lattice. Viewing $\L$ as a coherent category, its topos of types $T(\L)$ is equivalent to the topos of sheaves over its canonical extension $\L^\delta$, viewed as a locale.
\end{proposition}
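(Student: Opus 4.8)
The plan is to exploit the fact that, since $\L$ is a poset, the category $\tau\L$ is itself a poset, so that $T(\L) = Sh(\tau\L, J_p)$ is a localic topos, and then to identify the corresponding locale with $\L^\delta$. First I would unwind $\tau\L$ explicitly. For $\L$ viewed as a coherent category, $Sub_\L(a) = \,\downarrow_\L\! a$, and the prime filters of $\downarrow_\L\! a$ correspond bijectively to the prime filters of $\L$ containing $a$; hence the objects of $\tau\L$ may be written as pairs $(a,P)$ with $P \in PrFl(\L)$ and $a \in P$. Because every arrow in a poset is determined by its domain and codomain, any two local continuous maps with a common codomain are equivalent, so $\tau\L$ is a poset. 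A short calculation then shows that a morphism $(a,P) \to (b,Q)$ exists iff $Q \cap \downarrow_\L\! b \subseteq P$, and since $P,Q$ are filters containing $\top$ this is independent of the representatives and equivalent to $Q \subseteq P$. In particular all objects $(a,P)$ with a fixed $P$ are isomorphic, giving an equivalence of posets $\tau\L \simeq (PrFl(\L), \supseteq)$.

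Next I would analyse the topology $J_p$. Recall that a sieve on $(a,P)$ is $J_p$-covering iff it contains a morphism whose image is all of $(a,P)$, and that the image of $(a',P') \xrightarrow{[\alpha]} (a,P)$ is $(a, \exists_{[\alpha]}(P'\cap\downarrow_\L\! a')) = (a, P' \cap \downarrow_\L\! a)$. Thus $(a',P') \to (a,P)$ is a singleton cover precisely when $P' \cap \downarrow_\L\! a = P \cap \downarrow_\L\! a$. The key claim is that this forces $P = P'$, so that every covering morphism is an isomorphism and $J_p$ is the trivial topology. Indeed, if $P \subsetneq P'$ pick $v_0 \in P' \setminus P$; then $v_0 \wedge a \le a$ lies in $P'$ (as $a \in P \subseteq P'$) but not in $P$ (as $P$ is prime and $v_0 \notin P$), so $P' \cap \downarrow_\L\! a \not\subseteq P$. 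Consequently the covering sieves are exactly the maximal ones, and $Sh(\tau\L, J_p)$ is the plain presheaf topos $[\tau\L^\op, \Set] \simeq [(PrFl(\L),\supseteq)^\op, \Set]$.

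Finally I would match this with $Sh(\L^\delta)$. By the concrete description recalled in Section~\ref{sec:canextdl}, $\L^\delta$ is the frame of down-sets of $(PrFl(\L), \supseteq)$, i.e. the Alexandrov frame of this poset; it is spatial, and the standard computation of sheaves on an Alexandrov space identifies $Sh(\L^\delta)$ with the category of functors on the specialisation poset, namely $[(PrFl(\L),\supseteq)^\op, \Set]$ (equivalently, covariant functors on $(PrFl(\L),\subseteq)$). Comparing with the previous paragraph yields $T(\L) \simeq Sh(\L^\delta)$.

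The main obstacle is the triviality of $J_p$: the whole argument hinges on showing that no proper inclusion $P \subsetneq P'$ can satisfy the cover condition, which is exactly the separation step $v_0 \mapsto v_0 \wedge a$ above; without it $T(\L)$ would be a proper subtopos of presheaves rather than all of $Sh(\L^\delta)$. A secondary point requiring care is keeping the variance conventions consistent between the posetal site $\tau\L$ and the Alexandrov sheaf equivalence, so that both sides land on genuinely the same presheaf category on $(PrFl(\L),\supseteq)$ rather than on opposite-variance ones.
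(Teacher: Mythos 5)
Your proof is correct and takes essentially the same route as the paper's: identify $\tau L$ with the preorder $(PrFl(L), \supseteq)$, show that the topology $J_p$ is trivial so that $T(L)$ is the presheaf topos on this poset, and recognise the resulting localic topos as sheaves over the downset frame of $(PrFl(L), \supseteq)$, which is exactly $L^\delta$. The only cosmetic differences are that the paper first cuts down to the full subcategory of objects $(1,\rho)$ via the Comparison Lemma whereas you verify triviality of $J_p$ on all of $\tau L$ directly (equally valid, and it lets you bypass the Comparison Lemma in favour of a plain equivalence of posetal presheaf categories), and that your separation step $v_0 \wedge a \notin P$ needs only upward closure of the filter $P$, not primeness as you state.
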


\begin{proof}
By definition, the category $\tau\L$ consists of all pairs $(a, \rho)$, where $a \in \L$ and $\rho$ is a prime filter in the downset of $a$. The category $\tau\L$ is a preorder and, for all objects $(a, \rho)$, $(a, \rho) \cong (1, \uparrow\!\rho)$, where $\uparrow\!\rho$ denotes the upset of $\rho$ in $\L$, which is a prime filter in $\L$. We write $\cat{E}_\L$ for the full subcategory of $\tau\L$ consisting of all pairs of the form $(1, \rho)$. By the Comparison Lemma, the topos of types $T(\L) = Sh(\tau_L, J_p)$ is equivalent to the topos of sheaves over $\cat{E}_\L$ with the topology induced by $J_p$. 

Note that $\cat{E}_\L$ is (isomorphic to) the poset $(PrFl(\L), \supseteq)$ of prime filters of $\L$ with the reverse inclusion order, as there exists a morphism $(1, \rho) \to (1, \rho')$ if and only if $\rho' \subseteq \rho$. Furthermore, the induced topology on $\cat{E}_\L$ is the trivial topology. Hence, the topos of types $T(\L) $ is equivalent to the topos of presheaves over $\cat{E}_\L \cong (PrFl(\L), \supseteq)$. As $\cat{E}_\L$ is a poset, this presheaf topos is localic. Its lattice of subterminal objects is isomorphic to the downset lattice of $(PrFl(\L), \supseteq)$, {\it i.e.}, to the canonical extension $\L^\delta$ of $\L$. Hence, the topos $T(\L)$ is equivalent to the topos of sheaves over the locale $\L^\delta$. \QED
\end{proof}

We will now give an alternative description of the topos of types $T(\C)$, of a coherent category $\C$, using the coherent hyperdoctrine $\S_\C^\delta$ defined in the previous section. For all $A \!\in\! \C$, $\S_\C^\delta(A)$ is a complete completely distributive lattice and therefore it is in particular a frame. Using that $\S_\C^\delta$ is a coherent hyperdoctrine, it follows from the description of internal locales in $\Set^{\C^{\op}}$ given in \cite{JoTi}, that $\S_\C^\delta$ is a locale in $\Set^{\C^{\op}}$. We will show that the topos of types of $\C$ is equivalent to the topos of sheaves over the internal locale $\S_\C^\delta$. The Comparison Lemma plays an essential role in the proof of this theorem. The basic form of this Lemma is to be found in \cite{SGA4}. We use a slightly more general form, as formulated in \cite{KoMo}.  

\begin{lemma}[Comparison Lemma, \cite{KoMo}]
Let $e \colon (\D, K) \to (\C, J)$ be a functor between essentially small sites satisfying
\begin{enumerate}
\item $e$ is \emph{cover preserving}, {\it i.e.}, for all $D \in \D$, if $S \in K(D)$, then the sieve $(e(S))$, generated by the image of $S$ in $\C$, is in $J(e(D))$;
\item $e$ is \emph{locally full}, {\it i.e.}, if $g \colon e(C) \to e(D)$ is a map in $\C$, then there exists a cover $(\xi_i \colon C_i \to C)_{i\in I}$ in $\D$ and maps $(f_i \colon C_i \to D)_{i\in I}$ such that, for all $i \in I$, $g \after e(\xi_i) = e(f_i)$;
\item $e$ is \emph{locally faithful}, {\it i.e.}, if $f, f'\colon C \to D$ in $\D$ with $e(f) = e(f')$, then there exists a cover $(\xi_i \colon C_i \to C)_{i\in I}$ such that, for all $i \in I$, $f \after \xi_i = f' \after \xi_i$;
\item $e$ is \emph{locally surjective on objects}, {\it i.e.}, for all $C \in \C$, there exists a covering family of the form $(e(C_i) \to C)_{i\in I}$;
\item $e$ is \emph{co-continuous}, {\it i.e.}, if $(\xi_i \colon C_i \to e(D))_{i \in I}$ is a cover in $\C$, then the set of arrows $f \colon D' \to D$ in $\D$, such that $e(f)$ factors through some $\xi_i$, covers $D$ in $\D$.
\end{enumerate}
Then the functor $e^* \colon Sh(\C, J) \to Sh(\D, K)$ given by $F \mapsto F \after e$, is an equivalence.
\end{lemma}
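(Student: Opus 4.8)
The plan is to show that $e^*$ is well-defined on sheaves and then that it is fully faithful and essentially surjective, so that it is an equivalence. Throughout I would work with the presheaf adjunction $e_! \dashv e^* \dashv e_*$ between $\Set^{\C^\op}$ and $\Set^{\D^\op}$, where $e^*$ is restriction along $e$ and $e_!$ is the (pointwise) left Kan extension, together with the sheafification functor $a_J$ on $\C$. The five hypotheses split cleanly according to the part of the argument they feed: co-continuity (condition~5) guarantees well-definedness, local surjectivity (condition~4) drives faithfulness and essential surjectivity, local fullness and faithfulness (conditions~2 and~3) drive fullness, and cover-preservation (condition~1) controls how sheafification interacts with $e_!$. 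When $e$ happens to be genuinely full, faithful and surjective this reduces to the classical comparison lemma; the content here is that $e$ has these properties only \emph{locally}, so every coherence one wants holds only after passing to a refining cover.

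First I would check that $e^*$ lands in $Sh(\D, K)$: this is the standard fact that a co-continuous functor induces a restriction functor on categories of sheaves, and it follows formally from condition~5. For faithfulness, suppose $\alpha, \beta \colon F \to F'$ in $Sh(\C,J)$ satisfy $e^*\alpha = e^*\beta$. Then $\alpha$ and $\beta$ agree on every object of the form $e(D)$; since by condition~4 every $C \in \C$ admits a $J$-cover by such objects and $F'$ is separated, $\alpha_C = \beta_C$. For fullness, given $\theta \colon e^*F \to e^*F'$ and $C \in \C$, I would pick a cover $(g_i \colon e(D_i) \to C)$ from condition~4 and glue the maps $\theta_{D_i}$ through the sheaf property of $F'$ to obtain $\alpha_C$. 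Well-definedness and naturality of the resulting $\alpha$ require that the relations among the $g_i$, and the arrows of $\C$ between image objects, come---after refinement by a cover---from arrows of $\D$, which is exactly what local fullness (condition~2) and local faithfulness (condition~3) provide.

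For essential surjectivity, given $G \in Sh(\D, K)$ I would set $F = a_J(e_! G)$ and show $e^*F \cong G$ naturally. Here $(e_! G)(e(D))$ is a colimit over the comma category $(e \downarrow e(D))$; conditions~2--4 identify the contribution of the object $(D, \id_{e(D)})$ with $G(D)$ after sheafification, while cover-preservation (condition~1) ensures that sheafifying does not disturb this identification. Combined with full faithfulness this yields the equivalence; equivalently, one checks that the unit and counit of the adjunction $a_J e_! \dashv e^*$, restricted to sheaves, are isomorphisms.

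I expect the main obstacle to be fullness together with the identification $e^* a_J e_! G \cong G$: in both, one must glue local data supplied by $\theta$ (respectively by $G$) along covers coming from condition~4, and then verify that the glued map is independent of the choices and natural. The delicate point is that $e$ is neither full, faithful, nor surjective on the nose---only locally so---so every coherence holds only after passing to a common refining cover, and the real work is organising these refinements so that the sheaf axioms of $F'$ (respectively of $G$) force a unique amalgamation.
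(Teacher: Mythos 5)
A preliminary remark on the comparison you were asked to survive: the paper does \emph{not} prove this lemma --- it is imported verbatim from \cite{KoMo} and used as a black box in the proofs of Theorems~\ref{thm:inttt} and~\ref{thm:shjcoh}. So the benchmark is the proof in Kock--Moerdijk, and your outline does follow the standard architecture of that style of argument: restriction $e^*$ flanked by left Kan extension and sheafification, faithfulness from separatedness plus local surjectivity, fullness by gluing along covers from condition 4 using conditions 2 and 3 for coherence, and essential surjectivity via $F = a_J(e_!G)$. Your faithfulness argument is complete and correct as stated. But the proposal remains a plan rather than a proof: the two places you yourself flag as delicate --- fullness, and the identification $e^*a_J e_! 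G \cong G$ --- are exactly where all the mathematical content of the lemma lives, and neither is carried out; naming the conditions that ``provide'' coherence is not the same as organising the refining covers so that the amalgamations are well defined, independent of choices, and natural.

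There is also one concrete misstep. You claim that well-definedness of $e^*$ on sheaves ``follows formally from condition 5,'' i.e.\ that a co-continuous functor automatically restricts to sheaf categories. This is false: co-continuity alone does not make $F \mapsto F \after e$ preserve sheaves. Take $\D$ the poset of open sets of a topological space with the open-cover topology, $\C$ the terminal category with the trivial topology, and $e$ the unique functor. Then $e$ is co-continuous (the only covering sieve of the point is maximal, and the arrows whose image factors through it form the maximal sieve on any $D$), yet $e^*$ sends a set $S$, which is a sheaf on the point, to the constant presheaf on $\D$, which is not a sheaf. Note that in this example condition 2 fails (there are no maps from members of a cover of one open to a disjoint open), consistent with the lemma. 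What condition 5 gives formally is that the \emph{right} Kan extension $e_*$ preserves sheaves, equivalently that $a_K \after e^*$ is the inverse image of a geometric morphism $Sh(\D,K) \to Sh(\C,J)$; that $e^*F$ is itself a $K$-sheaf, on the nose, is a genuine claim of the lemma and must be proved by hand: one transports a matching family along a cover using condition 1 and then needs conditions 2--4 already at this stage to see that the transported family is well defined and matching. So the hypotheses interlock earlier than your division of labour suggests; the first step of your plan, as written, would fail, and repairing it requires exactly the kind of local-lifting-and-refinement argument you postponed to the fullness and essential-surjectivity steps.
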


\begin{theorem}
\label{thm:inttt}
For a coherent category $\C$, the topos of types $T(\C)$ is equivalent to the topos of sheaves over the internal locale $\S_\C^\delta = (\,\,)^\delta \after Sub_\C$ in $\Set^{\C^{\op}}$.
\end{theorem}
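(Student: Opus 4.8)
The plan is to realise the topos of internal sheaves $Sh(\S_\C^\delta)$ through an explicit external site, to cut that site down to a dense subsite indexed by prime filters, and then to match this subsite with $(\tau\C, J_p)$ by means of the Comparison Lemma. The bridge between the two descriptions is the correspondence, recalled in Section~\ref{sec:canextdl}, between the prime filters of a distributive lattice and the completely join-irreducible elements of its canonical extension, applied fibrewise to $Sub_\C$.

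First I would describe the external site for $Sh(\S_\C^\delta)$. Following the description of internal locales in $\Set^{\C^{op}}$ from \cite{JoTi}, the topos of sheaves on the internal locale $\S_\C^\delta$ is the category of sheaves on a site $(\mathbb{L}, J_L)$, where $\mathbb{L}$ is the category of elements of the frame-valued presheaf $\S_\C^\delta$: objects are pairs $(A, u)$ with $A \in \C$ and $u \in \S_\C^\delta(A)$, a morphism $(A,u) \to (B,v)$ is an arrow $\alpha \colon A \to B$ of $\C$ with $u \le \S_\C^\delta(\alpha)(v)$, and a family $\{(A_i, u_i) \xrightarrow{\alpha_i}(A,u)\}_i$ is $J_L$-covering exactly when $u = \bigvee_i (\exists_{\alpha_i})^\delta(u_i)$. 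Here the left adjoints $(\exists_\alpha)^\delta$ exist because each $\S_\C^\delta(\alpha)$ is a complete lattice homomorphism, and they restrict to $\exists_\alpha$ on $Sub_\C$. Since every $\S_\C^\delta(A)$ is algebraic, each $u$ is the join of the completely join-irreducible elements below it, so the full subcategory $\mathbb{L}_{\mathrm{irr}} \hookrightarrow \mathbb{L}$ on the pairs $(A, x)$ with $x \in J^\infty(\S_\C^\delta(A))$ is a dense subsite and $Sh(\S_\C^\delta) \simeq Sh(\mathbb{L}_{\mathrm{irr}}, J_L)$. Under the fibrewise correspondence $\rho \leftrightarrow x_\rho := \bigwedge\{U \mid U \in \rho\}$, the objects of $\mathbb{L}_{\mathrm{irr}}$ are exactly the pairs $(A, x_\rho)$ with $\rho$ a prime filter of $Sub_\C(A)$, and, since each $x_\rho$ is completely join-prime, the induced covers are generated by singletons, just like $J_p$.

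Next I would define the comparison functor $e \colon \mathbb{L}_{\mathrm{irr}} \to \tau\C$ and verify the five hypotheses of the Comparison Lemma. On objects $e(A, x_\rho) = (A, \rho)$; a morphism of $\mathbb{L}_{\mathrm{irr}}$, namely an arrow $\alpha \colon A \to B$ of $\C$ with $x_\rho \le \S_\C^\delta(\alpha)(x_\sigma)$, is a total continuous map $(A, \rho) \to (B, \sigma)$ (the inequality says $\alpha^*(V) \in \rho$ for all $V \in \sigma$), and $e$ sends it to its germ $[\alpha]$. Local surjectivity on objects is trivial, as $e$ is a bijection on objects. Cover preservation rests on the Esakia Lemma: for a singleton cover $x_\rho = (\exists_\alpha)^\delta(x_{\rho'})$, filteredness of $\rho'$ gives $(\exists_\alpha)^\delta(x_{\rho'}) = \bigwedge_{W \in \rho'}\exists_\alpha(W) = x_{\exists_{[\alpha]}\rho'}$, so $\exists_{[\alpha]}\rho' = \rho$ and $[\alpha]$ is a singleton $J_p$-cover of $(A,\rho)$. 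Co-continuity follows from the same computation read backwards, using that the $J_p$-covers are singletons and that $x_\rho$ is join-prime.

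The main obstacle is local fullness and local faithfulness, that is, reconciling the genuine $\C$-arrows underlying morphisms of $\mathbb{L}_{\mathrm{irr}}$ with the germs of local continuous maps constituting the morphisms of $\tau\C$. The functor $e$ is neither full nor faithful: a germ need not have a total representative, and distinct total maps may have the same germ. Both defects are repaired by a single family of covers. If $U \in \rho$, equivalently $x_\rho \le U$, then the inclusion $U \hookrightarrow A$ yields a singleton cover $(U, x_\rho) \to (A, x_\rho)$ in $\mathbb{L}_{\mathrm{irr}}$, since $\exists$ along a mono is the identity on subobjects and hence $(\exists_{U \hookrightarrow A})^\delta(x_\rho) = x_\rho$. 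Given $g = [\alpha] \colon e(A,x_\rho) \to e(B,x_\sigma)$ with representative $\alpha \colon U \to B$, $U \in \rho$, this cover together with the arrow $\alpha \colon (U, x_\rho) \to (B, x_\sigma)$ of $\mathbb{L}_{\mathrm{irr}}$ witnesses local fullness; a similar restriction to a common domain $U \in \rho$ on which two total maps agree witnesses local faithfulness. Once the five conditions are in place, the Comparison Lemma gives $Sh(\tau\C, J_p) \simeq Sh(\mathbb{L}_{\mathrm{irr}}, J_L) \simeq Sh(\S_\C^\delta)$, that is, $T(\C) \simeq Sh(\S_\C^\delta)$, as claimed.
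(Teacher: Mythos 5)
Your proposal is correct and follows essentially the same route as the paper's proof: the same external site $\C \ltimes \S_\C^\delta$ presenting the internal sheaf topos, the same dense subsite on pairs $(A,x)$ with $x$ completely join-irreducible (covers generated by singletons), the same comparison functor to $\tau\C$, and the same verification of the five Comparison Lemma conditions, including the mono-cover trick $(U,x) \to (A,x)$ for $U \in \rho$ to repair local fullness and faithfulness. The only cosmetic deviation is that you establish cover preservation via the Esakia Lemma, computing $(\exists_\alpha)^\delta(x_{\rho'}) = \bigwedge\{\exists_\alpha W \,|\, W \in \rho'\} = x_{\exists_{[\alpha]}\rho'}$, whereas the paper gets $\exists_{[\alpha]}\rho_z = \rho_x$ directly from the adjunction $\exists_\alpha \dashv \S_\C^\delta(\alpha)$; both computations are valid and equally short.
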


\begin{proof}
Using a general construction in \cite{Mo}, we may describe an (external) site $(\C\!\ltimes\! \S_\C^\delta, J)$ such that the topos of sheaves over the internal locale $\S_\C^\delta$ is equivalent to the topos $Sh(\C\!\ltimes\! \S_\C^\delta, J)$ of sheaves over this site. We have to show that $Sh(\C\!\ltimes\! \S_\C^\delta, J)$ is equivalent to the topos of types $T(\C)$. 
The objects of $\cat{C} \!\ltimes\! \S_\C^\delta$ are pairs $(A, u)$, where $A \in \cat{C}$ and $u \in \S_\C^\delta(A)$. A morphism $(A, u) \to (B,v)$ is a morphism $A \xrightarrow{\alpha} B$ in $\cat{C}$ such that $u \le \S_\C^\delta(\alpha)(v)$. The Grothendieck topology $J$ on $\cat{C} \!\ltimes\! \S_\C^\delta$ is given by: a sieve $\{(A_i, u_i) \xrightarrow{\alpha_i} (A,u)\}_{i \in I}$ is a cover if and only if
$$
\bigvee \{\exists_{\alpha_i} u_i\,|\, i \in I\} = u,
$$
where $\exists_{\alpha_i}$ is the left adjoint of $\S_\C^\delta(\alpha_i)$. 
Let $\cat{D}$ be the full subcategory of $\cat{C} \!\ltimes\! \S_\C^\delta$ consisting of the objects of the form $(A, x)$, where $A \in \C$ and $x \in \mathcal{J}^\infty(S_\C^\delta(A))$. The induced topology $J'$ on $\D$ is the topology generated by the singleton covers. We will use the Comparison Lemma to prove
$$
Sh(\C \!\ltimes\! \S_\C^\delta, J) \simeq Sh(\D,J') \simeq Sh(\tau\C, J_p) = T(\C).
$$ 
We leave it to the reader to check that the inclusion $(\D, J') \hookrightarrow (\C \ltimes \S_\C^\delta, J)$ satisfies the conditions of the Comparison Lemma.
To define a functor $e \colon \D \to \tau\C$, recall from Section~\ref{sec:canextdl} that, for a distributive lattice $L$, the completely join-irreducible elements $\mathcal{J}^\infty(L^\delta)$ of its canonical extension $L^\delta$ correspond to the prime filters $PrFl(L)$ of $L$. We write
$$
\begin{array}{ll}
\text{for $\rho \in PrFl(L)$,} & x_\rho := \bigwedge \rho \in \mathcal{J}^\infty(L^\delta),\\
\text{for $x \in \mathcal{J}^\infty(L^\delta)$,} & \rho_x := \{a \in L\,|\,x \le a\} \in PrFl(L). 
\end{array} 
$$ 
As, for $A \in \C$, $\S_\C^\delta(A) = Sub_\C(A)^\delta$, the completely join-irreducible elements of $\S_\C^\delta(A)$ correspond to the prime filters in $Sub_\C(A)$. 
We define a functor $e \colon \D \to \tau\C$ by
$$
(A,x) \xrightarrow{\alpha} (B,z) \,\mapsto\, (A, \rho_x) \xrightarrow{[\alpha]} (B,\rho_z)
$$
We first show that this functor is well-defined, that is, for a morphism $(A,x) \xrightarrow{\alpha} (B, z)$ in $\D$, $\alpha$ is a (local) continuous map $(A, \rho_x) \to (B, \rho_z)$. Let $U \in \rho_z$, {\it i.e.}, $U \in Sub_\C(A)$ and $z \le U$ (in $\S_\C^\delta(B) = Sub_\C(B)^\delta$). We have to show $\alpha^*(U) \in \rho_x$. Recall that $\S_\C^\delta(\alpha) \colon \S_\C^\delta(B) \to \S_\C^\delta(A)$ is the canonical extension of $\alpha^* \colon Sub_\C(B) \to Sub_\C(A)$, hence $\S_\C^\delta(\alpha)(U) = \alpha^*(U)$. Using the fact that  $(A,x) \xrightarrow{\alpha} (B, z)$ is a morphism in $\C \!\ltimes \S_\C^\delta$ it follows that
$$
x \le \S_\C^\delta(\alpha)(z) \le \S_\C^\delta(\alpha)(U) = \alpha^*(U).
$$
Hence $\alpha^*(U) \in \rho_x$.

We now check that the functor $e \colon  \D \to \tau\C$ satisfies conditions 1 to 5 of the Comparison Lemma.
\begin{enumerate}
\item Let $S$ be a covering sieve of $(A,x)$ in $\D$. Then there exists $(B, z) \xrightarrow{\alpha} (A,x) \in S$ s.t. $\exists_{\alpha} z = x$. It follows that in $\tau\C$, 
$$
\begin{array}{rcll}
\exists_{[\alpha]} \rho_z &=& \{V \in Sub_\C(A)\,|\,\alpha^*(V) \in \rho_z\}\\
&=&
\{V \in Sub_\C(A)\,|\,z \le \alpha^*(V)\}\\
&=&
\{V \in Sub_\C(A)\,|\,\exists_\alpha z \le V\} &\text{(where $\exists_\alpha \dashv \S_\C^\delta(\alpha)$ and using $\alpha^*(V) = \S_\C^\delta(\alpha)(V)$)}\\
&=&
\{V \in Sub_\C(A)\,|\,x \le V\}\\
&=&
\rho_x.
\end{array}
$$
Whence $(e(S))$ covers $(A, \rho_x) = e(A,x)$.  
\item Let $e(A,x) = (A, \rho_x) \xrightarrow{[\alpha]} (B,\rho_z) = e(B,z)$ in $\tau\C$. Let $m \colon U \hookrightarrow A \in \rho_x$ such that $\alpha \colon U \to B$. Then $x$ is also completely join-irreducible in $\S_\C^\delta(U)$, where we view $\S_\C^\delta(U) = Sub_\C(U)^\delta$ as a subset of $\S_\C^\delta(A) = Sub_\C(A)^\delta$. The morphism $(U, x) \xrightarrow{m} (A,x)$ generates a covering sieve, as $m$ being mono implies that $\exists_m \colon \S_\C^\delta(U) \to \S_\C^\delta(A)$ is the inclusion map. Commutativity of the following diagram
$$
\xymatrix{
e(A,x) \ar[r]^-{[\alpha]} & e(B,z)\\
e(U,x) \ar[u]^-{e(m)} \ar[ur]_-{e(\alpha)}
}
$$
proves that this sieve satisfies the requirements.

Remark that in $\tau\C$ the maps $[id_U] \colon e(A, x) \leftrightarrows e(U, x) \colon [m]$ are each others inverse, whence, in $\tau\C$, $e(A,x) \cong e(U,x)$. However, in $\D$ only one of the two maps is present. 

\item Let $\alpha, \beta \colon (A, x) \to (B,z)$ in $\D$ such that $[\alpha] = [\beta]$. Then there exists, by definition, $m \colon U \hookrightarrow A \in \rho_x$ with $\alpha \circ m = \beta \circ m$. As above $\{(U,x) \xrightarrow{m} (A,x)\}$ generates a covering sieve. This sieve satisfies the requirements.

\item For $(A, \rho) \in \tau\C$, $e(A, x_\rho) = (A,\rho)$, so $e$ is surjective on objects.

\item Let $\{(A_i, \rho_i) \xrightarrow{[\alpha_i]} e(A, x)\}_{i \in I}$ be a covering sieve in $\tau\C$. For each $i$, let $U_i$ be the domain of $\alpha_i$. Then the arrow $e(U_i, x_{\rho_i}) \xrightarrow{e(\alpha_i)} e(A,x)$ factors through $(A_i, \rho_i) \xrightarrow{[\alpha_i]} e(A, x)$ and the family $\{(U_i, x_{\rho_i}) \xrightarrow{\alpha_i} (A,x)\}_{i \in I}$ generates a covering sieve in $\D$.
\end{enumerate}
Applying the Comparison Lemma twice, we have 
$$
Sh(\C \!\ltimes\! \S_\C^\delta, J) \simeq Sh(\D,J') \simeq Sh(\tau\C, J_p) = T(\C),
$$ 
which completes the proof.\QED
\end{proof}
\auxproof{
{\bf Claim 1:} The induced topology $J'$ on $\D$ is the topology generated by the singleton covers. \\
Let $S = \{(A_i, x_i) \xrightarrow{\alpha_i} (A, x)\}_{i \in I}$ be a sieve in $\D$. Then $S \in J'(A,x)$ if and only if the $(S) \in J(A,x)$, where $(S)$ is the sieve generated by $S$ in $\C \!\ltimes\! \S_\C^\delta$. Note that 
$$
\begin{array}{rcl}
(S) \in J(A,x) &\Leftrightarrow& \bigvee \{\exists_{\alpha_i} x_i\,|\,i\in I\} = x\\
&\Leftrightarrow&
\exists i \in I. \ \exists_{\alpha_i} x_i = x,
\end{array}
$$
where the last equivalence follows from the fact that $x$ is completely join-irreducible.\\
\\
{\bf Claim 2:} The inclusion $(\D, J') \to (\C \!\ltimes\! \S_\C^\delta, J)$ satisfies conditions 1 to 5 of the Comparison Lemma.
\begin{enumerate}
\item Immediate from the definition of $J'$.
\item $D$ is a full subcategory of $\C \!\ltimes\!S_\C^\delta$, so the inclusion functor is full.
\item The inclusion functor is faithful.
\item Let $(A,u) \in  (\C \!\ltimes\! \S_\C^\delta, J)$ and consider the familie $F = \{(A, x) \xrightarrow{\id_A} (A,u) \,|\,u \ge x \in J^\infty(S_\C^\delta(A))\}$. As $\S_\C^\delta(A)$ is join-generated by $J^\infty(S_\C^\delta(A))$, it follows that $\bigvee \{x \,|\, (A,x) \in F\} = u$. Hence, the sieve generated by $F$ covers $(A,u)$.
\item Let $(A,x) \in \D$ and let $\{(A_i, u_i) \xrightarrow{\alpha_i} (A,x)\}_{i \in I}$ be a covering sieve in $\S_\C^\delta(A)$. Then $\bigvee_I \exists_{\alpha_i} u_i = x$. As $x$ is completely join-irreducible, there exists $i \in I$ such that $\exists_{\alpha_i} u_i = x$. For $(A_i, z) \in \D$ with $z \le u_i$, $(A_i, z) \xrightarrow{\alpha_i} (A,x)$ is an arrow in $D$ which factors through $(A_i, u_i) \xrightarrow{\alpha_i} (A,x)$ in $\C \!\ltimes\! \S_\C^\delta$. Furthermore 
$$
\bigvee \{\exists_{\alpha_i} z\,|\, u_i \ge z \in \mathcal{J}^\infty(S_\C^\delta(A_i))\} = \exists_{\alpha_i} \bigvee \{ z\,|\, u_i \ge z \in \mathcal{J}^\infty(S_\C^\delta(A_i))\} = \exists_{\alpha_i} u_i = x.
$$
Again using join-irreducibility of $x$, there exists $z \in \mathcal{J}^\infty(S_\C^\delta(A_i))$ with $z \le u_i$ and $\exists_{\alpha_i} z = x$. From which the required property follows.
\end{enumerate}
} 

On a category $\D$ in $\Coh^+$, we may consider the topology generated by images and \emph{arbitrary} joins, {\it i.e.}, the topology where a sieve $S$ on $A \in \D$ is covering iff $\bigvee \{\exists_{\alpha} B \,|\, B \xrightarrow{\alpha} A \in S\} = A$. We denote this topology by $J_{coh^+}$. For a coherent category $\C$, $\tau\C$ is (isomorphic to) the full subcategory of $\C^\delta = \A(\S_\C^\delta)$ consisting of all pairs $(A, x)$, with $A \in \C$ and $x \in \mathcal{J}^{\infty}(\S_\C^\delta(A))$. The topology $J_p$ on $\tau\C$ is the topology induced by the topology $J_{coh^+}$ on $\C^\delta$. Using this, it readily follows that also $(\C^\delta, J_{coh^+})$ is a site for the topos of types $T(\C)$.

\subsection{Involving the coherent topology}
When working in $\Set^{\C^{\op}}$, for a coherent category $\C$, one only remembers the limit structure of $\C$, {\it i.e.}, the Yoneda embedding $y \colon \C \to \Set^{\C^{\op}}$ only preserves the finite limits of $\C$ and not the joins and images. Therefore, it is more natural to consider $\C$ with the coherent topology $J_{coh}$ and to work in $Sh(\C, J_{coh})$, the classifying topos of $\C$. In this section we show that, for a coherent category $\C$, the functor $\S_\C^\delta$ is a sheaf over $(\C, J_{coh})$. The topos of internal sheaves over $\S_\C^\delta$ in $Sh(\C, J_{coh})$ is equivalent to the topos of internal sheaves over $\S_\C^\delta$ in $\Set^{\C^{\op}}$, as we prove in Theorem~\ref{thm:shjcoh}.

To prove that $\S_\C^\delta$ is a sheaf over $(\C, J_{coh})$ we start with a lemma.

\begin{lemma}
\label{lem:uniqgl}
Let $\C$ be a coherent category and let $\{A_i \xrightarrow{\alpha} A \,|\,i \in I\}$ be a finite collection of morphisms in $\C$ s.t. $\bigvee \exists_{\alpha_i}A_i = A$. For all $u \in \S_\C^{\delta}(A)$, $u = \bigvee \exists_{\alpha_i}^\delta(\S_\C^\delta(\alpha_i)(u))$.
\end{lemma}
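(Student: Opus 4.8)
The naive strategy---verify the identity on the original lattice $Sub_\C(A)$ and extend it to all of $\S_\C^\delta(A)$ by continuity---does not work directly, since $Sub_\C(A)$ does \emph{not} join-generate its canonical extension $\S_\C^\delta(A)$ (the join-generators of $\S_\C^\delta(A)$ are its completely join-irreducible, i.e.\ prime-filter, elements). The plan is instead to exploit the fact, established in the proposition that $P \mapsto P^\delta$ preserves coherent hyperdoctrines, that $\S_\C^\delta$ is itself a coherent hyperdoctrine and in particular satisfies Frobenius reciprocity. This lets me carry out the whole computation inside $\S_\C^\delta(A)$ for an arbitrary $u$, never decomposing $u$ into join-generators at all.

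Concretely, I would fix $u \in \S_\C^\delta(A)$ and $i \in I$. Since $\S_\C^\delta$ is a coherent hyperdoctrine, $\exists_{\alpha_i}^\delta$ is left adjoint to $\S_\C^\delta(\alpha_i)$ and Frobenius holds; applying it with top element gives
\[
\exists_{\alpha_i}^\delta(\S_\C^\delta(\alpha_i)(u)) = \exists_{\alpha_i}^\delta(\top \wedge \S_\C^\delta(\alpha_i)(u)) = \exists_{\alpha_i}^\delta(\top) \wedge u.
\]
Because the embedding $Sub_\C(A_i) \hookrightarrow \S_\C^\delta(A_i)$ preserves the top element and $\exists_{\alpha_i}^\delta$ restricts to $\exists_{\alpha_i}$ on $Sub_\C(A_i)$, the factor $\exists_{\alpha_i}^\delta(\top)$ equals $\exists_{\alpha_i}(A_i)$, an element of the original lattice $Sub_\C(A)$. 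Taking the (finite) join over $i$ and using finite distributivity of $\S_\C^\delta(A)$,
\[
\bigvee_i \exists_{\alpha_i}^\delta(\S_\C^\delta(\alpha_i)(u)) = \bigvee_i\big(\exists_{\alpha_i}(A_i) \wedge u\big) = \Big(\bigvee_i \exists_{\alpha_i}(A_i)\Big) \wedge u.
\]
Finally, the hypothesis $\bigvee_i \exists_{\alpha_i}(A_i) = A = \top$ (which transfers from $Sub_\C(A)$ to $\S_\C^\delta(A)$, the embedding preserving finite joins and the top) collapses the right-hand side to $\top \wedge u = u$, as required.

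The only real subtlety is identifying the correct existential quantifier in $\S_\C^\delta$ and checking that on the top element it agrees with the original $\exists_{\alpha_i}$; both follow from the earlier result that $P \mapsto P^\delta$ yields a coherent hyperdoctrine with $\exists^{P^\delta}_\alpha = (\exists^P_\alpha)^\delta$. If one preferred not to invoke Frobenius for $\S_\C^\delta$, an alternative route would reduce to completely join-irreducible $x$ (corresponding to a prime filter $\rho_x$), write $\S_\C^\delta(\alpha_i)(x) = \bigwedge\{\alpha_i^*(a)\,|\,a \in \rho_x\}$ since $\S_\C^\delta(\alpha_i)$ is a complete homomorphism, pull $\exists_{\alpha_i}^\delta$ through this \emph{filtered} meet via the Esakia Lemma, apply Frobenius termwise in $Sub_\C$, and conclude by complete distributivity; there the point to check is the filteredness required by Esakia. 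Both arguments are short, but the Frobenius-in-$\S_\C^\delta$ computation is the cleaner one and is the version I would write up.
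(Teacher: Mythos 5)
Your proof is correct and takes essentially the same route as the paper's: the paper likewise rewrites $\S_\C^\delta(\alpha_i)(u)$ as $\S_\C^\delta(\alpha_i)(u) \wedge 1_{\S_\C^\delta(A_i)}$, applies Frobenius reciprocity in the coherent hyperdoctrine $\S_\C^\delta$ to obtain $u \wedge \exists_{\alpha_i}(A_i)$ (using that $\exists_{\alpha_i}^\delta$ agrees with $\exists_{\alpha_i}$ on the top element, which lies in $Sub_\C(A_i)$), and concludes by finite distributivity of $\S_\C^\delta(A)$ together with the hypothesis $\bigvee_i \exists_{\alpha_i} A_i = A$. The Esakia-based alternative you mention is not needed, and your main write-up matches the paper's argument step for step.
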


\begin{proof}
As usual, we view $\S_\C(A_i)$ as a subset of $\S_\C^\delta(A_i)$. The top element of $\S_\C(A_i)$ is the top element of $\S_\C^\delta(A_i)$, {\it i.e.}, $1_{\S_\C^\delta(A_i)} = A_i \hookrightarrow A_i \in Sub_\C(A_i) \subseteq \S_\C^\delta(A_i)$. For $u \in \S_\C^{\delta}(A)$,
$$
\begin{array}{rcll}
\bigvee \{\exists_{\alpha_i}^\delta(\S_\C^\delta(\alpha_i)(u))\,|\,i \in I\}
&=&
\bigvee \{\exists_{\alpha_i}^\delta(\S_\C^\delta(\alpha_i)(u) \wedge 1_{\S_\C^\delta(A_i)})\,|\,i \in I\}\\
&=&
\bigvee \{u \wedge \exists_{\alpha_i}^\delta(1_{\S_\C^\delta(A_i)})\,|\, i \in I\} &\text{(Frobenius)}\\
&=&
u \wedge \bigvee \{\exists_{\alpha_i}(A_i)\,|\, i \in I\}&\text{(distributivity)}\\
&=&
u \wedge A = u,
\end{array}
$$
as required.\QED
\end{proof}

To ease the notation, in the remainder of this section we write, for $A \xrightarrow{\alpha} B$ in $\C$, $\exists_\alpha$ both for the left adjoint to the pullback functor $\alpha^* \colon Sub_\C(B) \to Sub_\C(A)$ and for its canonical extension, which is the left adjoint to $\S_C^\delta(\alpha)$. The intended meaning should be clear from the context.

\begin{proposition}
The functor $\S_\C^\delta \colon \C^{\op} \to \Sets$ is a sheaf over $(\C, J_{coh})$.
\end{proposition}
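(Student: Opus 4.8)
The plan is to reduce the sheaf condition to the \emph{basic} covers of the coherent topology and then extract both uniqueness and existence of amalgamations directly from Lemma~\ref{lem:uniqgl}. Since $\C$ has finite limits and, by stability of images and finite joins, the finite jointly-covering families $\{A_i \xrightarrow{\alpha_i} A \mid i \in I\}$ satisfying $\bigvee_i \exists_{\alpha_i} A_i = A$ are stable under pullback, they form a basis (pretopology) generating $J_{coh}$. Hence it suffices to show that for each such family the diagram
$$
\S_\C^\delta(A) \longrightarrow \prod_{i} \S_\C^\delta(A_i) \rightrightarrows \prod_{i,j} \S_\C^\delta(A_i \times_A A_j)
$$
is an equalizer, where the two parallel maps are induced by the projections of the pullbacks $A_i \times_A A_j$.

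Uniqueness (injectivity of the first map) is immediate from Lemma~\ref{lem:uniqgl}: if $u, u' \in \S_\C^\delta(A)$ satisfy $\S_\C^\delta(\alpha_i)(u) = \S_\C^\delta(\alpha_i)(u')$ for all $i$, then $u = \bigvee_i \exists_{\alpha_i}(\S_\C^\delta(\alpha_i)(u)) = \bigvee_i \exists_{\alpha_i}(\S_\C^\delta(\alpha_i)(u')) = u'$. For existence, given a matching family $(u_i)_i$ with $u_i \in \S_\C^\delta(A_i)$, I would set $u := \bigvee_i \exists_{\alpha_i}(u_i)$ and verify $\S_\C^\delta(\alpha_j)(u) = u_j$ for each $j$. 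Writing $\pi_i, \pi_j$ for the two projections of the pullback $A_i \times_A A_j$, I compute, using that $\S_\C^\delta(\alpha_j)$ is a complete homomorphism (hence preserves all joins) together with Beck--Chevalley for the pullback square $A_i \times_A A_j \to A_j \to A \leftarrow A_i$,
$$
\S_\C^\delta(\alpha_j)(u) = \bigvee_i \S_\C^\delta(\alpha_j)(\exists_{\alpha_i}(u_i)) = \bigvee_i \exists_{\pi_j}(\S_\C^\delta(\pi_i)(u_i)).
$$
The matching condition $\S_\C^\delta(\pi_i)(u_i) = \S_\C^\delta(\pi_j)(u_j)$ then rewrites this as $\bigvee_i \exists_{\pi_j}(\S_\C^\delta(\pi_j)(u_j))$. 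Finally I apply Lemma~\ref{lem:uniqgl} to $A_j$ and the family $\{A_i \times_A A_j \xrightarrow{\pi_j} A_j\}_i$, which is covering because pulling back the covering identity $\bigvee_i \exists_{\alpha_i} A_i = A$ along $\alpha_j$ yields $\bigvee_i \exists_{\pi_j}(A_i \times_A A_j) = A_j$; the lemma gives $\bigvee_i \exists_{\pi_j}(\S_\C^\delta(\pi_j)(u_j)) = u_j$, and therefore $\S_\C^\delta(\alpha_j)(u) = u_j$ as required.

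I expect the two points needing genuine care to be, first, the verification that the coherent covers are stable under pullback (so that they constitute a basis and, crucially, the pulled-back family over $A_j$ is again covering), both of which rest on Beck--Chevalley and the stability of images and finite joins; and second, the bookkeeping in the Beck--Chevalley step, where one must correctly orient the pullback square and exploit that $\S_\C^\delta(\alpha_j)$ preserves arbitrary joins precisely because it is the canonical extension of a lattice homomorphism. Everything else is formal manipulation supported by Lemma~\ref{lem:uniqgl}.
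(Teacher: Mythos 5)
Your proposal is correct and is essentially the paper's own argument: the same candidate amalgamation $u = \bigvee_i \exists_{\alpha_i}(u_i)$, the same Beck--Chevalley-plus-matching computation of $\S_\C^\delta(\alpha_j)(u)$ over the pullbacks $A_i \times_A A_j$, and Lemma~\ref{lem:uniqgl} for uniqueness. The only (equivalent) variation is the closing step: the paper finishes with the two adjunction inequalities, namely $u_j \le \S_\C^\delta(\alpha_j)(\exists_{\alpha_j} u_j)$ for the lower bound and $\exists_{\pi_j}(\S_\C^\delta(\pi_j)(u_j)) \le u_j$ termwise for the upper bound, whereas you obtain the equality in one stroke by applying Lemma~\ref{lem:uniqgl} a second time over $A_j$, at the cost of verifying that the pulled-back family covers $A_j$ --- a verification you carry out correctly via stability of images and finite joins.
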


\begin{proof}
Let $\{A_i \xrightarrow{\alpha} A \,|\,i \in I\}$ be a finite collection of morphisms in $\C$ s.t. $\bigvee \exists_{\alpha_i}A_i = A$ and let $\{u_i \in \S_\C^\delta(A_i)\}_{i \in I}$ be a matching family. We have to show that there exists a unique element $u \in \S_\C^\delta(A)$ with $\S_\C^\delta(\alpha_i)(u) = u_i$, for all $i \in I$. Consider $u = \bigvee \{\exists_{\alpha_j} u_j \,|\,j \in I\}$. For $i \in I$,
$$
\S_\C^\delta(\alpha_i)(u) \,=\, \S_\C^\delta(\alpha_i)(\bigvee \{\exists_{\alpha_j} u_j \,|\,j \in I\}) \,\ge\, \S_\C^\delta(\alpha_i)(\exists_{\alpha_i} u_i) \,\ge\, u_i.
$$
To prove $\S_\C^\delta(\alpha_i)(u) \le u_i$, first recall that $\S_\C^\delta(\alpha)$ preserves all joins and therefore
$$
\S_\C^\delta(\alpha_i)(u) = \bigvee \{\S_\C^\delta(\alpha_i)(\exists_{\alpha_j} u_j) \,|\,j \in I\}
$$
We have to show that, for all $j \in I$, $\S_\C^\delta(\alpha_i)(\exists_{\alpha_j} u_j) \le u_i$. Let $j \in J$ and consider the pullback diagram
$$
\xymatrix{
A_i \times_A A_j \ar[r]^-{\gamma_i} \ar[d]_-{\gamma_j} & A_i \ar[d]^-{\alpha_i}\\
A_j \ar[r]_-{\alpha_j} & A
}
$$
Using the fact that $\S_\C^\delta$ is a coherent hyperdoctrine over $\C$, 
$$
\begin{array}{rcll}
\S_\C^\delta(\alpha_i)(\exists_{\alpha_j} u_j)
&=&
\exists_{\gamma_i}(\S_\C^\delta(\gamma_j)(u_j)) &\text{(Beck-Chevalley)}\\
&=&
\exists_{\gamma_i}(\S_\C^\delta(\gamma_i)(u_i)) &\text{(matching condition)}\\
&\le&
u_i &\text{(adjunction property).}
\end{array}
$$
Uniqueness of $u$ follows from Lemma~\ref{lem:uniqgl}.\QED
\end{proof}

We conclude this section by showing that the topos of internal sheaves over $\S_\C^\delta$ in $Sh(\C, J_{coh})$ is equivalent to the topos of internal sheaves over $\S_\C^\delta$ in $\Set^{\C^{\op}}$, and therefore, by Theorem~\ref{thm:inttt}, to the topos of types $T(\C)$. We write $\widehat{\C} = \Set^{\C^{\op}}$ and $\widetilde{\C} = Sh(\C, J_{coh})$.

\begin{theorem}
\label{thm:shjcoh}
Let $\C$ be a coherent category. Then $Sh_{\widehat{\C}}(\S_\C^\delta) \simeq Sh_{\widetilde{\C}}(\S_\C^\delta)$.
\end{theorem}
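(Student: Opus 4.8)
The plan is to present both internal-sheaf toposes as categories of sheaves on a single common external site, the Moerdijk site $\C \ltimes \S_\C^\delta$ already used in the proof of Theorem~\ref{thm:inttt}, and then to reduce the claim to a comparison of two Grothendieck topologies on that one category. First I would recall that $\widetilde{\C} = Sh(\C, J_{coh})$ sits inside $\widehat{\C} = \Set^{\C^{\op}}$ as a subtopos, through the geometric inclusion $i \colon \widetilde{\C} \hookrightarrow \widehat{\C}$ whose inverse image $i^*$ is $J_{coh}$-sheafification. The preceding proposition, that $\S_\C^\delta$ is a $J_{coh}$-sheaf, is exactly what is needed here: since $i^*$ is left exact and cocontinuous it preserves internal frames, and as $\S_\C^\delta$ is a sheaf we have $i^*\S_\C^\delta \cong \S_\C^\delta$, so $\S_\C^\delta$ genuinely determines an internal locale of $\widetilde{\C}$ and $Sh_{\widetilde{\C}}(\S_\C^\delta)$ is well-posed.

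Next, using \cite{Mo} exactly as in Theorem~\ref{thm:inttt}, I would write $Sh_{\widehat{\C}}(\S_\C^\delta) \simeq Sh(\C \ltimes \S_\C^\delta, J)$, where a family $\{(A_i, u_i) \xrightarrow{\alpha_i} (A,u)\}$ is a $J$-cover iff $\bigvee_i \exists_{\alpha_i} u_i = u$. Applying the same external-site construction to the Grothendieck topos $\widetilde{\C}$, rather than to the presheaf topos $\widehat{\C}$, yields $Sh_{\widetilde{\C}}(\S_\C^\delta) \simeq Sh(\C \ltimes \S_\C^\delta, \bar{J})$ on the very same underlying category, where $\bar{J}$ is now generated by the fibrewise covers $J$ together with the covers lifted from the coherent topology: for a $J_{coh}$-cover $\{A_i \xrightarrow{\alpha_i} A\}$ of $A$ in $\C$ and $u \in \S_\C^\delta(A)$, the family $\{(A_i, \S_\C^\delta(\alpha_i)(u)) \xrightarrow{\alpha_i} (A,u)\}$ is declared $\bar{J}$-covering. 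Since $J \subseteq \bar{J}$, it then suffices to prove $\bar{J} = J$, i.e. that these lifted coherent covers are already $J$-covers; conceptually this is the statement that the localic structure map $Sh_{\widehat{\C}}(\S_\C^\delta) \to \widehat{\C}$ factors through the subtopos $i$.

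The crux is precisely this last verification, and it is supplied by Lemma~\ref{lem:uniqgl}. For a finite coherent cover $\{A_i \xrightarrow{\alpha_i} A\}$, so that $\bigvee_i \exists_{\alpha_i} A_i = A$, and any $u \in \S_\C^\delta(A)$, the lifted family $\{(A_i, \S_\C^\delta(\alpha_i)(u)) \xrightarrow{\alpha_i}(A,u)\}$ is a $J$-cover exactly when $\bigvee_i \exists_{\alpha_i}^\delta(\S_\C^\delta(\alpha_i)(u)) = u$, which is the content of Lemma~\ref{lem:uniqgl}. Hence $\bar{J} = J$, the two sites coincide, and $Sh_{\widehat{\C}}(\S_\C^\delta) \simeq Sh_{\widetilde{\C}}(\S_\C^\delta)$, as required. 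I expect the main obstacle to lie not in this computation but in the bookkeeping around it: justifying that Moerdijk's external-site description applies to internal sheaves over the Grothendieck topos $\widetilde{\C}$ with exactly the topology $\bar{J}$ described above, and matching the various notions of cover so that Lemma~\ref{lem:uniqgl} applies verbatim. Once the topology $\bar{J}$ has been correctly identified, the equality $\bar{J}=J$ is immediate.
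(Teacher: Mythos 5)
Your proposal is correct and takes essentially the same route as the paper: the paper likewise uses the construction of \cite{Mo} to present both topoi as sheaves on the single external site $\C \ltimes \S_\C^\delta$, reduces the claim to showing the two topologies coincide, and settles the nontrivial comparison with Lemma~\ref{lem:uniqgl}. The only difference is presentational — the paper describes $J_{\widetilde{\C}}$ via saturated sieves $\overline{S}$ and verifies the equality of joins $\bigvee \{\exists_{\beta} v \,|\, (\beta,v) \in S\} = \bigvee \{\exists_{\gamma} w \,|\, (\gamma,w) \in \overline{S}\}$ directly, whereas you argue on generators of the topology — but the underlying computation is identical.
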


\begin{proof}
As in the proof of Theorem~\ref{thm:inttt}, we use the construction of \cite{Mo} to describe the topoi $Sh_{\widehat{\C}}(\S_\C^\delta)$ and $Sh_{\widetilde{\C}}(\S_\C^\delta)$ as sheaves over an external site. Both external sites have the same underlying category $\C\!\ltimes\!\S_\C^\delta$. We will show that also the two topologies coincide. We write $J_{\widehat{\C}}$ (resp. $J_{\widetilde{\C}}$) for the topology on $\C\!\ltimes\!\S_\C^\delta$ corresponding to $Sh_{\widehat{\C}}(\S_\C^\delta)$ (resp. $Sh_{\widetilde{\C}}(\S_\C^\delta)$). Let $S$ be a sieve on $(A,u)$. As described in the proof of Theorem~\ref{thm:inttt}, $S$ is a cover in $J_{\widehat{\C}}$ if and only if
$$
\bigvee \{\exists_{\beta} v\,|\, (B, v) \xrightarrow{\beta} (A,u) \in S\} = u.
$$ 
To ease the notation we write $(\beta, v) \in S$ for $(B, v) \xrightarrow{\beta} (A,u) \in S$. When describing the topology $J_{\widetilde{\C}}$, we have to take the coherent topology on $\C$ into account. The sieve $S$ is a cover in $J_{\widetilde{\C}}$ if and only if
$$
\bigvee \{\exists_{\gamma} w\,|\, \gamma \colon C \to A, w \in \S_\C^\delta(C), (\gamma, w) \in \overline{S}\} = u,
$$ 
where $(\gamma, w) \in \overline{S}$ if and only if there exists a cover $\{C_k \xrightarrow{\gamma_k} C\}_{k\in K}$ in the coherent topology on $\C$ (where $C$ is the domain of $\gamma$) such that, for all $k \in K$, $(\gamma \after \gamma_k, \S_\C^\delta(\gamma_k)(w)) \in S$. 
We will show 
$$
\bigvee \{\exists_{\beta} v\,|\, (\beta, v) \in S\} = \bigvee \{\exists_{\gamma} w\,|\, (\gamma, w) \in \overline{S}\} ,
$$ 
which implies that the two topologies coincide.
As $(\beta, v) \in S$ implies $(\beta, v) \in \overline{S}$, clearly $\bigvee \{\exists_{\beta} v\,|\, (\beta, v) \in S\} \le \bigvee \{\exists_{\gamma} w\,|\, (\gamma, w) \in \overline{S}\}$. To prove the converse inequality, let $C \xrightarrow{\gamma} A$ and $w \in \S_\C^\delta(C)$ such that $(\gamma, w) \in \overline{S}$. Then there exists a finite collection $\{C_k \xrightarrow{\gamma_k} C \,|\, 1 \le k \le n\}$ of morphisms in $\C$ such that $\bigvee \{\exists_{\gamma_k} C_k \,|\,1 \le k\le n\} = C$ and $(\gamma \after \gamma_k, \S_\C^\delta(\gamma_k)(w)) \in S$, for all $k$. Using Lemma~\ref{lem:uniqgl},
$$
\begin{array}{rcll}
\exists_\gamma(w) 
&=&
\exists_\gamma(\bigvee\{\exists_{\gamma_k}(\S_\C^\delta(\gamma_k)(w))\,|\,1\le k\le n\})\\
&=&
\bigvee\{\exists_\gamma(\exists_{\gamma_k}(\S_\C^\delta(\gamma_k)(w)))\,|\,1\le k\le n\}\\
&=& 
\bigvee\{\exists_{\gamma\after\gamma_k}(\S_\C^\delta(\gamma_k)(w))\,|\,1\le k\le n\}\\
&\le&
\bigvee \{\exists_{\beta} v\,|\, (\beta, v) \in S\},
\end{array}
$$    
which proves the claim.\QED
\end{proof}

\section{Properties of the topos of types}
\label{sec:proptt}
In this final section we apply our alternative description of the topos of types construction to investigate some of its properties. In Section~\ref{sec:propttmor} we study the action of the topos of types construction on morphisms. Of central importance in this section is the method of proof of the main theorem (Theorem~\ref{thm:mortt}), as it enlightens the relationship between properties of the topos of types construction for coherent categories and properties of the canonical extension construction for distributive lattices.
In Section~\ref{sec:ttmod} we describe, for a coherent category $\C$, a relationship between its topos of types $T(\C)$ and the class of models of $\C$ (in $\Set$). This result (Theorem~\ref{thm:hcfev}) is new, as far as we know. 

\subsection{The topos of types construction on morphisms}
\label{sec:propttmor}
In this section we study the action of the topos of types construction on morphisms. The main theorem of this section is the following.

\begin{theorem}
\label{thm:mortt}
Let $F \colon \C \to \D$ be a coherent functor.
\begin{enumerate}
\renewcommand{\theenumi}{\roman{enumi}}
\item if $F$ is conservative, then $T(F) \colon T(\D) \to T(\C)$ is a geometric surjection;
\item if $F$ is a morphism of Heyting categories, then $T(F)$ is an open geometric morphism. 
\end{enumerate}
\end{theorem}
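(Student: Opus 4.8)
The plan is to realise the geometric morphism $T(F)$ through the external site $(\C^\delta, J_{coh^+})$ for the topos of types established above, and then to read off both properties from a single distributive‑lattice map at each object, namely the action $\tau^F_A \colon Sub_\C(A) \to Sub_\D(FA)$ of $F$ on subobjects together with its canonical extension. The coherent functor $F$ lifts to $F^\delta = \A((\S_F)^\delta) \colon \C^\delta \to \D^\delta$, a morphism in $\Coh^+$. Since $F^\delta$ preserves finite limits, images and arbitrary joins of subobjects, it is flat and sends $J_{coh^+}$-covers to $J_{coh^+}$-covers, hence is a morphism of sites $(\C^\delta, J_{coh^+}) \to (\D^\delta, J_{coh^+})$ inducing $T(F) \colon T(\D) \to T(\C)$. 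Under this presentation the inverse‑image frame map of the localic morphism underlying $T(F)$ is, objectwise in $A \in \C$, the complete lattice homomorphism $(\tau^F_A)^\delta \colon Sub_\C(A)^\delta \to Sub_\D(FA)^\delta$; its naturality in $A$ is precisely the fact, recorded above, that $(\S_F)^\delta = (F, (\tau^F)^\delta)$ is a morphism of hyperdoctrines. Both statements thus reduce to a property of the internal frame map $\S_\C^\delta \to \S_\D^\delta$, which I would check objectwise on the maps $(\tau^F_A)^\delta$.

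For part (i), I first observe that a conservative coherent functor reflects the order on subobjects: if $F(U) \le F(V)$ in $Sub_\D(FA)$ then, as $F$ preserves binary meets, $F$ sends the inclusion $U \wedge V \hookrightarrow U$ to an isomorphism, so that inclusion is itself an isomorphism and $U \le V$. Hence each $\tau^F_A$ is an embedding of distributive lattices. Canonical extension preserves such embeddings: dually, an injective lattice homomorphism induces a surjection $PrFl(Sub_\D(FA)) \twoheadrightarrow PrFl(Sub_\C(A))$ of prime‑filter spaces, and $(\tau^F_A)^\delta$ is the inverse‑image map of downset lattices along this surjection, hence injective. An injective inverse‑image frame map is exactly a surjection of (internal) locales, and a locale surjection induces a surjective geometric morphism between the associated sheaf toposes; therefore $T(F)$ is a geometric surjection.

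For part (ii), assume $F$ is a morphism of Heyting categories. Then each $\tau^F_A$ preserves the Heyting implication, and by the result above that canonical extension preserves the Heyting structure, $(\tau^F_A)^\delta$ again preserves $\to$. Being a complete lattice homomorphism, $(\tau^F_A)^\delta$ preserves all meets and so has a left adjoint $\lambda_A$; preservation of $\to$ is equivalent, by a short adjoint computation, to Frobenius reciprocity for this left adjoint, namely
\[
\lambda_A\big(m \wedge (\tau^F_A)^\delta(l)\big) = \lambda_A(m) \wedge l \qquad (m \in Sub_\D(FA)^\delta,\ l \in Sub_\C(A)^\delta).
\]
Thus each $(\tau^F_A)^\delta$ is an open frame map. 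Stability of the adjoints $\lambda_A$ under the substitution maps (the Beck--Chevalley condition) is the naturality already recorded for the first order hyperdoctrine morphism $(\S_F)^\delta$, so the internal frame map $\S_\C^\delta \to \S_\D^\delta$ is an internal open map of locales. By the Joyal--Tierney characterisation of open localic geometric morphisms, $T(F)$ is open.

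The routine parts --- flatness and cover‑preservation of $F^\delta$, and the order‑reflection from conservativity --- are straightforward; the substance lies in the two reductions from topos‑level to frame‑level properties and in keeping the change of base between $\widehat{\C}$ and $\widehat{\D}$ honest. For (i) the key inputs are the identification of locale surjections with injective inverse‑image frame maps and the preservation of embeddings by $\CE$. For (ii) the crux is the equivalence between preservation of $\to$ by $(\tau^F_A)^\delta$ and Frobenius for its left adjoint, together with the verification that these left adjoints are stable under substitution, so that the internal left adjoint genuinely exists and $T(F)$ is open rather than merely open at each object; this is where the Heyting hypothesis on $F$ is essential.
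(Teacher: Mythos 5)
Your frame-level arguments are correct and in fact coincide with the paper's own: conservativity gives that each $F_A \colon Sub_\C(A) \to Sub_\D(FA)$ is an order-embedding, canonical extension preserves embeddings (your Stone-duality argument, via the induced surjection of prime filter spaces, is a valid proof of this), and injective inverse-image maps are internal locale surjections --- this is Proposition~\ref{lem:cohtosurj}. Likewise for (ii): the equivalence between preservation of $\to$ and Frobenius reciprocity for the left adjoint, and naturality of the left adjoints obtained as mates of the (canonically extended) $\forall$-preservation squares, is exactly how Proposition~\ref{lem:Heytopen} proceeds via Lemma~\ref{lem:comadj}. Be slightly careful with one phrasing: stability of the $\lambda_A$ under substitution is \emph{not} literally the naturality of $(\tau^F)^\delta$; it is the mate of the commutation of $F^\delta$ with $\forall$, which is available because $(\S_F)^\delta$ is a morphism of \emph{first order} hyperdoctrines when $F$ is Heyting --- the Heyting hypothesis enters there, as you note.

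The genuine gap is the step you dispose of in one sentence: the claim that the localic part of $T(F)$ is the geometric morphism induced by the internal frame map with components $(\tau^F_A)^\delta = F_A^\delta$. This is not automatic from the site presentation $(\C^\delta, J_{coh^+})$. Note first that there is no internal frame map ``$\S_\C^\delta \to \S_\D^\delta$'' in a single topos: $\S_\C^\delta$ lives in $\widehat{\C}$ and $\S_\D^\delta$ in $\widehat{\D}$; the correct codomain is $\S_\D^\delta \after F$, an internal frame in $\widehat{\C}$. This is precisely the base-change issue you flag at the end but do not resolve. Since $T(\D) = Sh_{\widehat{\D}}(\S_\D^\delta)$ is localic over $\widehat{\D}$ rather than over $\widehat{\C}$, one must actually construct the hyperconnected--localic factorisation of $T(F)$ and identify its localic leg before either conclusion can be drawn. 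The paper does this in Proposition~\ref{prop:factortt}: it factors $T(F)$ through $Sh_{\widehat{\C}}(\S_\D^\delta \after F)$ and proves that the first leg $\phi \colon Sh_{\widehat{\D}}(\S_\D^\delta) \to Sh_{\widehat{\C}}(\S_\D^\delta \after F)$ is hyperconnected by computing the comparison map on subobject classifiers, $\phi_*(\Omega_{T(\D)})(A,w) \cong \, \downarrow_{\S_\D^\delta(FA)}\! w \cong \Omega(A,w)$, using the explicit description of $\Omega$ for sheaves on an internal locale. Without this computation (or an equivalent identification of $T(F)_*\Omega_{T(\D)}$ from your site presentation), the final inferences in both parts --- ``a geometric morphism is surjective (resp.\ open) iff its localic part is, and the localic part is induced by $\tau^F$'' --- have nothing to attach to. Once that proposition is supplied, your argument is complete and agrees with the paper's proof, which combines it with Propositions~\ref{prop:loctotop}, \ref{lem:cohtosurj} and~\ref{lem:Heytopen}.
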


Item (i) appears to be new, item (ii) was proved already in \cite{MakRey95}, Corollary 8.6. However, our method of proof is very different from the approach in \cite{MakRey95}. Using our representation of the topos of types as sheaves over the internal locale $\S_\C^\delta$, we may rely on results on the canonical extension construction for distributive lattices and the relationship between properties of internal locale morphisms and properties of the correponding geometric morphisms. This allows a transparent proof of the above theorem which exposes the analogue with the algebraic situation.

A geometric morphism is surjective (resp. open) if and only if its localic part is surjective (resp. open). For a coherent functor $F \colon \C \to \D$, both $\S_\C^{\delta}$ and $\S_\D^{\delta} \after F$ are locales in $\Set^{\C^{\op}}$ and $F$ induces a morphism of locales $\tau^F \colon \S_\D^{\delta} \after F \to \S_\C^\delta$, where, for $A \in \C$, the frame homomorphism $(\tau_A^F)^* \colon \S_\C^{\delta}(A) \to \S_\D^{\delta}(FA)$ is the canonical extension of the restriction of $F$ to a map $F_A \colon \S_\C(A) \to \S_\D(FA)$. We will also write $F_A^\delta$ for $(\tau_A^F)^*$. 
This internal morphism of locales in turn gives rise to a geometric morphism $\psi \colon Sh(\S_\D^{\delta} \after F) \to Sh(\S_\C^{\delta}) \simeq T(\C)$, between the topoi of sheaves over the internal locales $\S_\D^{\delta} \after F$ and $\S_\C^\delta$, respectively. We will show that $\psi$ is the localic part of $T(F) \colon T(\D) \to T(\C)$ and then prove Theorem~\ref{thm:mortt} by studying this localic part.  We rely on the fact that, for a locale $X$ in $\Set^{\C^{\op}}$, the subobject classifier in $Sh_{\widehat{C}}(X)$ may be described as,
$$
\begin{array}{rcl}
\Omega \colon (\C \!\ltimes\! X)^{\op} &\to &\Sets \\
\xymatrix{(A, u) \ar[d]_-{\alpha}\\ (B, v)} &\mapsto& \xymatrix{\downarrow_{\scriptscriptstyle{X(A)}}\! u \\ \downarrow_{\scriptscriptstyle{X(B)}}\! v \ar[u]_-{w \mapsto X(\alpha)(w) \wedge u}}
\end{array}
$$

\auxproof{
Proof of the fact that $\tau^F$ is an internal frame homomorphism.
As all components of $\tau^F$ are complete lattice homomorphisms, they are in particular morphisms of frames. Using furthermore that $(F, (\tau^F)^\delta) \colon \S_\C^\delta \to \S_\D^\delta$ is a morphism of coherent hyperdoctrines, whence preserves existential quantification, it follows tat $\tau^F$ is an internal frame homomorphism.
}

\auxproof{
We show that, for $(A,u) \in \C\!\ltimes\! X$, the closed sieves on $(A, u)$ correspond to the elements of $\downarrow_{\scriptscriptstyle{X(A)}}\! u$. Define a mapping
$$
\begin{array}{rcll}
h \colon \{S\,|\,\text{$S$ is a closed sieve on $(A,u)$}\} &\to& \downarrow_{\scriptscriptstyle{X(A)}} u\\
S &\mapsto& \bigvee\{\exists_{\alpha}v\,|\,(B, v) \xrightarrow{\alpha} (A, u) \in S\}.
\end{array}
$$
We first show that, for any closed sieve $S$ on $(A,u)$ and $(B, v) \xrightarrow{\alpha} (A, u)$ in $\C \!\ltimes\! X$,
\begin{equation}
\label{eq:injh}
\alpha \in S \quad\Leftrightarrow\quad \exists_{\alpha} v \le h(S),
\end{equation}
where $\exists_{\alpha}$ is the left adjoint of $X(\alpha)$. Clearly the left to right implication holds, by definition of $h$. For the converse implication note that $\{(B, v) \xrightarrow{\alpha} (A, h(S))\,|\, (B, v) \xrightarrow{\alpha} (A, u) \in S\}$ is a cover in $\C \!\ltimes\! X$. Hence, as $S$ is closed, $(A, h(S)) \xrightarrow{\id} (A, u) \in S$. A morphism $(B, v) \xrightarrow{\alpha} (A, u)$ with $\exists_{\alpha} v \le h(S)$, may be factored as
$$
(B, v) \xrightarrow{\alpha} (A, h(S)) \xrightarrow{\id} (A, u).
$$
As $S$ is a sieve, then $(B, v) \xrightarrow{\alpha} (A, u) \in S$.

It follows from \eqref{eq:injh} that $h$ is injective. To prove surjectivity, let $z \in \downarrow_{\scriptscriptstyle{X(A)}} u$ and define
$$
S_z = \{(B,v) \xrightarrow{\alpha} (A, u)\,|\, \exists_{\alpha}v \le z\}.
$$
We will show that $S_z$ is a closed sieve on $(A,u)$. For $(B,v) \xrightarrow{\alpha} (A, u) \in S_z$ and $(C, w) \xrightarrow{\beta} (B, v)$ in $\C\!\ltimes\! X$,
$$
\begin{array}{rcll}
\exists_{\alpha \after \beta} (w)
&=&
\exists_{\alpha}(\exists_{\beta}(w))\\
&\le&
\exists_{\alpha}(v) &(\text{definition morphisms in $\C\!\ltimes\! X$})\\
&\le&
z&((B,v) \xrightarrow{\alpha} (A, u) \in S_z).
\end{array}
$$
So $S_z$ is a sieve. Let $(B, v) \xrightarrow{\alpha} (A, u)$ and $\{(B_i, v_i) \xrightarrow{\alpha_i} (B, v)\}_{i \in I}$ a cover such that, for each $i \in I$, $(B_i, v_i) \xrightarrow{\alpha \after \alpha_i} (A, u) \in S_z$. Then
$$
\begin{array}{rcll}
\exists_{\alpha}(v)
&=&
\exists_{\alpha}(\bigvee_I \exists_{\alpha_i}v_i)\\
&=&
\bigvee_I \exists_{\alpha}(\exists_{\alpha_i}(v_i))\\
&=&
\bigvee_I \exists_{\alpha \after \alpha_i}v_i\\
&\le&
z.
\end{array}
$$
So $(B,v) \xrightarrow{\alpha} (A, u) \in S_z$ and $S_z$ is a closed sieve on $(A,u)$.

Clearly $h$ preserves and reflects the order, hence it is an order isomorphism. For a morphism $(B,v) \xrightarrow{\alpha} (A, u)$, pulling back a closed sieve on $(A,u)$ to a sieve on $(B,v)$ corresponds under the isomorphims $h$ to the mapping
$$
\begin{array}{rcll}
\downarrow_{\scriptscriptstyle{X(A)}} u &\to& \downarrow_{\scriptscriptstyle{X(B)}} v\\
w &\mapsto& X(\alpha)(w) \wedge v,
\end{array}
$$  
which completes the proof.
}

\begin{proposition}
\label{prop:factortt}
Let $F \colon \C \to \D$ be a morphism of coherent categories. The hyperconnected localic factorisation of $T(F) \colon T(\D) = Sh_{\widehat{\D}}(\S_\D^{\delta}) \to Sh_{\widehat{\C}}(\S_\C^{\delta}) = T(\C)$ is given by
$$
\xymatrix{
&Sh_{\widehat{\C}}(\S_\D^\delta \after F) \ar[rd]^-{\psi}\\
T(\D) = Sh_{\widehat{\D}}(\S_\D^{\delta}) \ar[rr]_-{T(F)} \ar[ru]^-{\phi} && Sh_{\widehat{\C}}(\S_\C^{\delta}) = T(\C)
}
$$
where $\phi$ is the geometric morphism induced by the morphism of sites
$$
\begin{array}{rcl}
\C \ltimes (\S_\D^\delta \after F) &\to& \D \ltimes \S_\D^\delta\\
(A,w) &\mapsto& (FA, w)
\end{array}
$$
and $\psi$ is the geometric morphism induced by the internal morphism of locales $\tau^F \colon \S_\D^\delta \after F \to \S_\C^\delta$, as described above Definition~\ref{def:cons}.
\end{proposition}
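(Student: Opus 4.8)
The plan is to produce the factorisation concretely on the external sites of \cite{Mo}, and then to verify the two defining properties of the hyperconnected--localic factorisation separately, invoking its uniqueness at the end.

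I would first exhibit $T(F)$, $\phi$ and $\psi$ as coming from three morphisms of sites. Write $\mathcal{M} = Sh_{\widehat{\C}}(\S_\D^\delta \after F)$ for the middle topos, so that $T(\C) = Sh(\C \ltimes \S_\C^\delta)$, $\mathcal{M} = Sh(\C \ltimes (\S_\D^\delta \after F))$ and $T(\D) = Sh(\D \ltimes \S_\D^\delta)$. Consider the functors
\[
\C \ltimes \S_\C^\delta \xrightarrow{\ H\ } \C \ltimes (\S_\D^\delta \after F) \xrightarrow{\ e\ } \D \ltimes \S_\D^\delta, \quad H(A,u) = (A, F_A^\delta(u)), \quad e(A,w) = (FA, w),
\]
where $F_A^\delta = (\tau_A^F)^*$ is the frame map defining $\tau^F$. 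Both are well defined, by naturality of $\tau^F$, and their composite is exactly the morphism of sites $(A,u) \mapsto (FA, F_A^\delta(u))$ that induces $T(F)$. Since passage to sheaves reverses morphisms of sites, $e$ induces $\phi \colon T(\D) \to \mathcal{M}$, $H$ induces $\psi \colon \mathcal{M} \to T(\C)$ and $T(F) = \psi \after \phi$, which gives the triangle. Second, $\psi$ is localic with no further work: it is the geometric morphism attached to the morphism of internal locales $\tau^F$ in $\widehat{\C}$, and any geometric morphism arising from a morphism of internal locales over a fixed base is localic.

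The core is to prove that $\phi$ is hyperconnected. Its direct image is restriction along $e$, so $(\phi_* G)(A,w) = G(FA, w)$. I would exploit the description of the subobject classifier recalled just before the statement: for a locale $X$ on $\widehat{\C}$ one has $\Omega(A,u) = \downarrow_{X(A)} u$ with transition maps $w \mapsto X(\alpha)(w) \wedge u$. Since by definition $(\S_\D^\delta \after F)(A) = \S_\D^\delta(FA)$ and $(\S_\D^\delta \after F)(\alpha) = \S_\D^\delta(F\alpha)$, applying this both in $\mathcal{M}$ and in $T(\D)$ yields
\[
(\phi_* \Omega_{T(\D)})(A,w) = \Omega_{T(\D)}(FA,w) = \downarrow_{\S_\D^\delta(FA)} w = \Omega_{\mathcal{M}}(A,w),
\]
compatibly with restriction, so that the canonical comparison $\Omega_{\mathcal{M}} \to \phi_* \Omega_{T(\D)}$ is an isomorphism: $\phi_*$ preserves the subobject classifier. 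Granting that $\phi^*$ is full and faithful, hyperconnectedness follows, since for $M \in \mathcal{M}$ the chain $\mathrm{Sub}_{T(\D)}(\phi^* M) \cong \mathrm{Hom}(M, \phi_* \Omega_{T(\D)}) \cong \mathrm{Sub}_{\mathcal{M}}(M)$ shows the essential image of $\phi^*$ is closed under subobjects, and closure under quotients then follows because $\phi^*$ is exact.

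The hard part will be the full faithfulness of $\phi^*$, equivalently that the unit $M \to \phi_*\phi^* M$ is an isomorphism. This cannot be formal, because $e$ is not full: $\D$ carries maps $FA \to FA'$, and objects $(B,v)$ with $B$ outside the image of $F$, that are invisible to $\C \ltimes (\S_\D^\delta \after F)$. I would attack it on the reduced sites of completely join-irreducible (prime-filter) objects supplied by the Comparison Lemma in the proof of Theorem~\ref{thm:inttt}, where both topologies are generated by singleton covers; there one checks that a germ of a filter-compatible local continuous map out of $(FA, \rho)$ can, after passing to a singleton cover, be represented by the image under $F$ of a morphism of $\C$, so that the unit is an isomorphism on representables and hence everywhere. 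Once $\phi^*$ is shown fully faithful, the computation above makes $\phi$ hyperconnected; combined with $\psi$ localic and $T(F) = \psi \after \phi$, uniqueness of the hyperconnected--localic factorisation identifies the triangle as claimed.
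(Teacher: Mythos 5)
Your setup of the triangle (the two site morphisms $H(A,u) = (A, F_A^\delta(u))$ and $e(A,w) = (FA,w)$ composing to the site morphism inducing $T(F)$), your observation that $\psi$ is localic because it comes from an internal locale morphism, and your computation $\phi_*(\Omega_{T(\D)})(A,w) = \Omega_{T(\D)}(FA,w) = \downarrow_{\S_\D^\delta(FA)} w = \Omega_{\mathcal{M}}(A,w)$ all coincide with the paper's proof --- indeed that computation \emph{is} the paper's entire argument for hyperconnectedness. The gap is what you do next: you treat full faithfulness of $\phi^*$ as an additional obligation, declare it ``the hard part,'' and defer it to a sketch. But no such step is needed. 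A geometric morphism is hyperconnected \emph{if and only if} its direct image preserves the subobject classifier, i.e.\ iff the canonical comparison map $\phi_*(\Omega_{T(\D)}) \to \Omega_{\mathcal{M}}$ is an isomorphism (Johnstone, \emph{Sketches of an Elephant}, A4.6.6; equivalently, the middle topos of the hyperconnected--localic factorisation of $\phi$ is the topos of internal sheaves on the internal locale $\phi_*(\Omega_{T(\D)})$ in $\mathcal{M}$, so if the comparison is an isomorphism the localic part of $\phi$ is trivial). Connectedness of $\phi$ is a \emph{consequence} of this criterion, not an input to it; the only thing your pointwise computation leaves to check is that the identification is the canonical comparison map, a routine naturality verification using $(\S_\D^\delta \after F)(\alpha) = \S_\D^\delta(F\alpha)$ --- not a new descent argument. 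Beware also that the companion condition ``$\phi^*$ preserves $\Omega$'' alone would \emph{not} suffice (e.g.\ $\Delta \colon \Set \to \Set^X$ preserves $\Omega$ but is localic, not hyperconnected), so it matters that it is the direct image comparison one computes, as both you and the paper do.

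Moreover, the mechanism you propose for full faithfulness would fail. The claim that a germ $(FA,\rho) \to (FB,\rho')$ in $\tau\D$ is, after refining by a singleton cover, of the form $[F\alpha]$ for some $\alpha$ in $\C$ has no general justification: singleton covers of $(FA,\rho)$ have domains $(V,\rho'')$ where $V$ ranges over arbitrary objects of $\D$ (typically subobjects of $FA$, which need not lie in the image of $F$), and since $F$ is not assumed full, maps $FA \to FB$ need not lift along $F$ even locally. Had such local lifting been available, together with your other data you would be well on the way to verifying the Comparison Lemma conditions for $e$, i.e.\ to $\phi$ being an \emph{equivalence} --- which it is not in general, precisely because $T(\D)$ sees objects and morphisms of $\D$ invisible to $\C \ltimes (\S_\D^\delta \after F)$. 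Full faithfulness of $\phi^*$ (the unit being an isomorphism) is true, but only as a corollary of hyperconnectedness obtained through the $\Omega$-criterion; it cannot be manufactured by lifting morphisms at the level of sites. So the correct repair of your proof is simply to delete the ``hard part'' and invoke the standard characterisation, which returns you exactly to the paper's argument.
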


\begin{proof}
As $\psi$ is induced by an internal morphism of locales, it is localic. It remains to show that $\phi$ is hyperconnected. We write $\mathcal{E} = Sh_{\widehat{\C}}(\S_\D^\delta \after F)$. The geometric morphism $\phi$ is hyperconnected if and only if the comparison map $\phi_{*}(\Omega_{T(\D)}) \to \Omega_{\mathcal{E}}$ is an isomorphism. For $(A, w) \in \C \ltimes (\S_\D^\delta \after F)$,
$$
\begin{array}{rcl}
\phi_{*}(\Omega_{T(\D)}) 
&\cong&
\Omega_{T(\D)}(FA, w)\\
&\cong&
\downarrow_{\S_\D^\delta(FA)} w\\
&\cong&
\Omega_{\mathcal{E}}(A,w),
\end{array}
$$ 
which proves the claim. \QED
\end{proof}

In the next proposition we state some well-known facts about the relation between properties of internal locale morphisms and properties of the corresponding geometric morphisms. 

\begin{proposition}
\label{prop:loctotop}
Let $\tau \colon Y \to X$ be an internal locale morphism in $\Set^{\C^{\op}}$ and $f \colon Sh(Y) \to Sh(X)$ the induced geometric morphism between the topoi of internal sheaves over $Y$ and $X$. If $\tau$ is a surjection of locales, then $f$ is a geometric surjection. If $\tau$ is an open map of locales, then $f$ is an open geometric morphism. 
\end{proposition}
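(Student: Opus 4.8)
The plan is to reduce both statements to the internal--external dictionary for sheaves on internal locales, as developed by Joyal and Tierney \cite{JoTi}. Recall that, via the construction of \cite{Mo} already used in the proofs of Theorem~\ref{thm:inttt} and Theorem~\ref{thm:shjcoh}, the topos $Sh(X)$ of internal sheaves over an internal locale $X$ in $\Set^{\C^{\op}}$ is the topos of sheaves over an external site with underlying category $\C \ltimes X$, and the internal locale morphism $\tau \colon Y \to X$ induces $f$ as the geometric morphism arising from the evident morphism of these sites. Since the structure maps $Sh(Y) \to \Set^{\C^{\op}}$ and $Sh(X) \to \Set^{\C^{\op}}$ are localic and $f$ commutes with them, $f$ is itself a localic geometric morphism, so it corresponds under the Joyal--Tierney correspondence exactly to the internal locale morphism $\tau$. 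The key point is that, over the base $\Set^{\C^{\op}}$, surjections and open maps of localic toposes are detected internally, and I would make each implication precise as follows.

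For the first statement, recall that a morphism of locales is a surjection precisely when the associated frame homomorphism is injective, so $\tau$ being a surjection means that the internal frame map $\tau^* \colon X \to Y$ is a monomorphism in $\Set^{\C^{\op}}$. On the other hand, a geometric morphism is a surjection iff its inverse image functor is faithful (see \cite{Joh}), and for a localic morphism this faithfulness is equivalent to injectivity of the corresponding internal frame map. Equivalently, the image sublocale of $\tau$ being all of $X$ corresponds, under the bijection between sublocales of $X$ and subtoposes of $Sh(X)$, to the image of $f$ being the whole of $Sh(X)$; so $\tau$ a surjection forces $f$ a surjection. The verification amounts to checking that the external-site description of $f^*$ reflects the internal injectivity of $\tau^*$ into faithfulness, which is the surjection part of the correspondence in \cite{JoTi}.

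For the second statement, recall that $\tau$ is an \emph{open} map of locales exactly when the internal frame map $\tau^*$ has an internal left adjoint $\exists_\tau$ satisfying the Frobenius identity $\exists_\tau(b \wedge \tau^*(a)) = \exists_\tau(b) \wedge a$. By the standard characterisation (see \cite{Joh}), $f$ is an open geometric morphism iff, for every object $A$ of $Sh(X)$, the map $f^* \colon Sub_{Sh(X)}(A) \to Sub_{Sh(Y)}(f^*A)$ has a left adjoint $\exists_f$, naturally in $A$ (Beck--Chevalley) and satisfying Frobenius reciprocity. The plan is to transport $\exists_\tau$ through the external-site description so as to produce exactly these left adjoints: since subobjects in $Sh(X)$ are governed by the internal frame $X$, the internal adjoint $\exists_\tau$ induces the required $\exists_f$, and the Frobenius identity for $\exists_\tau$ together with the Beck--Chevalley condition carried by the internal locale morphism yield precisely the Frobenius and stability conditions defining openness of $f$. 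Again this is an instance of the open-map part of the theory of \cite{JoTi}.

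The main obstacle is not either individual implication but the faithful bookkeeping of the internal--external translation: one must check that the notions of \emph{surjection} and \emph{open map}, read as internal statements about $\tau$ interpreted in the internal logic of $\Set^{\C^{\op}}$, coincide with the external topos-theoretic definitions applied to $f$, and that these hold relative to the presheaf base $\Set^{\C^{\op}}$ rather than over $\Set$. Once the dictionary of \cite{JoTi} is in place this matching is routine, which is why the proposition is stated as a collection of well-known facts; the real work lies in pinpointing the precise references and confirming their applicability in the present internal setting.
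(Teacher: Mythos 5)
Your proposal is correct in substance, and the facts you invoke are the right ones, but it takes a genuinely different route from the paper. You treat the statement as an instance of the Joyal--Tierney correspondence between localic toposes over $\Set^{\C^{\op}}$ and internal locales, citing as black boxes the general results that surjections and open maps of locales match up with geometric surjections and open geometric morphisms under this equivalence (your preliminary observation that $f$ is localic, because it commutes with the localic structure maps to $\Set^{\C^{\op}}$, is exactly the step that licenses this appeal). The paper's own proof is instead a self-contained, component-wise verification on the external site $\C \ltimes X$ of \cite{Mo}: it uses the characterisations that $f$ is surjective iff the comparison map $\lambda \colon \Omega_{Sh(X)} \to f_*(\Omega_{Sh(Y)})$ is monic, and open iff $\lambda$ has an internal left adjoint. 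With the explicit description of the subobject classifier recorded before Proposition~\ref{prop:factortt}, namely $\Omega_{Sh(X)}(A,u) = \,\downarrow_{X(A)}\! u$ and $f_*(\Omega_{Sh(Y)})(A,u) = \,\downarrow_{Y(A)}\! \tau_A^*(u)$, the component $\lambda_{(A,u)}$ is simply the restriction of the frame component $\tau_A^*$; surjectivity of $\tau$ (all $\tau_A^*$ embeddings) thus gives $\lambda$ monic at once, and the internal left adjoint $\sigma$ of $\tau^*$ restricts to pointwise left adjoints of the $\lambda_{(A,u)}$. One instructive difference in bookkeeping: you plan to verify Frobenius reciprocity and Beck--Chevalley as separate conditions on the subobject lattices of $Sh(X)$, whereas in the direct proof the internal Frobenius identity for $\sigma$ is used for exactly one purpose --- to show that the pointwise adjoints are natural in $(A,u)$, that is, that they assemble into an \emph{internal} left adjoint of $\lambda$. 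Your citation-based route buys brevity and generality (it works verbatim over any base topos), at the cost of leaving the internal--external translation unchecked; the paper's computation buys an elementary, transparent argument tailored to the presheaf base, which also makes it mesh directly with the definitions of surjection and open map used in Propositions~\ref{lem:cohtosurj} and~\ref{lem:Heytopen}.
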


\auxproof{
First assume $\tau$ is a surjection of locales, {\it i.e.}, for all $A \in \C$, $\tau_A^* \colon Y(A) \to X(A)$ is an embedding. The geometric morphism $f \colon Sh(Y) \to Sh(X)$ is a surjection if and only if the unique map $\lambda \colon \Omega_{Sh(X)} \to f_*(\Omega_{Sh(Y)})$ is a monomorphism in $Sh(X)$. We use the representation of $Sh(X)$ (resp. $Sh(Y)$) as sheaves over the external site $\C\!\ltimes\! X$ (resp. $\C\!\ltimes\! Y$). Using Lemma~\ref{lem:subobj}, for $(A, u) \in \C\!\ltimes\! X$,
$$
\begin{array}{rcl}
f_*(\Omega_{Sh(Y)})(A, u) &=& \Omega_{Sh(Y)}(A, \tau_A(u))\\
&=& \downarrow_{\scriptscriptstyle{Y(A)}}\! \tau_A(u). 
\end{array}
$$
The component of $\lambda$ at $(A, u) \in \C \!\ltimes\! X$ is the restriction of $\tau_A$ to a map
$$
\lambda_{(A,u)} \colon \,\Omega_{Sh(X)}(A,u) = \downarrow_{\scriptscriptstyle{X(A)}}\! u \,\to\, \downarrow_{\scriptscriptstyle{Y(A)}}\! \tau_A(u) = f_*(\Omega_{Sh(Y)})(A, u).
$$
As all components of $\tau$ are embeddings, also the components of $\lambda$ are embeddings. Hence, $\lambda$ is a monomorphism in $Sh(\C \!\ltimes\! X) \simeq Sh(X)$.

Now assume $\tau$ is an open map of locales, {\it i.e.}, $\tau^*$ has an internal left adjoint $\sigma$ satisfying Frobenius. The geometric morphism $f$ is open if and only if the unique map $\lambda \colon \Omega_{Sh(X)} \to f_*(\Omega_{Sh(Y)})$ has an internal left adjoint, that is, if and only if the components $\lambda_{(A, u)}$ have a left adjoint, natural in $(A,u)$.
As, for $A \in \C$, $\sigma_A$ is left adjoint to $\tau_A$, it restricts to a map $\downarrow_{\scriptscriptstyle{Y(A)}}\! \sigma_A(u) \to \downarrow_{\scriptscriptstyle{X(A)}}\! u$ and this restriction is clearly left adjoint to the restriction of $\tau_A$, {\it i.e.}, to $\lambda_{(A,u)}$. To prove naturality in $(A,u)$ of these left adjoints, let $(B, v) \xrightarrow{\alpha} (A, u)$ in $\C \!\ltimes\! X$ and $w \in \downarrow_{\scriptscriptstyle{Y(A)}} \tau_A(u)$. Then
$$
\begin{array}{rcll}
\Omega_{Sh(X)}(\alpha)(\sigma_A(w))
&=&
X(\alpha)(\sigma_A(w)) \wedge v\\
&=&
\sigma_B(Y(\alpha)(w)) \wedge v &\text{(naturality $\sigma$)}\\
&=&
\sigma_B(Y(\alpha)(w) \wedge \tau_B(v)) &\text{(Frobenius)}\\
&=&
\sigma_B(f_*(\Omega_{Sh(Y)})(\alpha)),
\end{array}
$$
as required.
}

It follows that, to prove Theorem~\ref{thm:mortt}, it suffices to study the internal morphism of locales $\tau^F \colon \S_\D^{\delta} \after F \to \S_\C^{\delta}$.

\begin{definition}
\label{def:cons}
A coherent functor $F \colon \C \to \D$ is \emph{conservative} if and only if, for all $A \in \C$, for all $U, V \in Sub_\C(A)$, $FU \le FV$ in $Sub_\D(FA)$ implies $U \le V$ in $Sub_\C(A)$, {\it i.e.}, for all $A \in \C$, the restriction $F_A \colon Sub_\C(A) \to Sub_\D(FA)$ is an order-embedding. 
\end{definition}

\begin{proposition}
\label{lem:cohtosurj}
Let $F \colon \C \to \D$ be a coherent functor. If $F$ is conservative, then the natural transformation $\tau^F \colon \S_\D^{\delta} \after F \to \S_\C^{\delta}$ is a surjection of locales. 
\end{proposition}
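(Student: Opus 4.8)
The plan is to reduce the statement to a purely lattice-theoretic fact: that canonical extension sends order-embeddings of distributive lattices to order-embeddings. Recall that a morphism of internal locales in $\Set^{\C^{\op}}$ is a surjection exactly when the corresponding internal frame homomorphism is a monomorphism, and that monomorphisms in $\Set^{\C^{\op}}$ are pointwise; for a homomorphism of (bounded distributive) lattices, being injective is moreover equivalent to being an order-embedding. By construction the component of $\tau^F$ at $A \in \C$ is the frame homomorphism $F_A^\delta \colon \S_\C^\delta(A) \to \S_\D^\delta(FA)$, the canonical extension of the restriction $F_A \colon Sub_\C(A) \to Sub_\D(FA)$. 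Since $F$ is conservative, each $F_A$ is an order-embedding of distributive lattices (Definition~\ref{def:cons}). Hence it suffices to prove: if $f \colon L \to K$ is an order-embedding of distributive lattices, then $f^\delta \colon L^\delta \to K^\delta$ is an order-embedding.

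For this I would pass to the prime-filter (Priestley) picture. Write $f^* \colon PrFl(K) \to PrFl(L)$ for the assignment $\sigma \mapsto f^{-1}(\sigma)$; since $f$ preserves finite meets, joins and bounds, preimages of prime filters are prime filters, so $f^*$ is well-defined, and it is clearly order-preserving for the reverse-inclusion orders. The key step is to show $f^*$ is \emph{surjective}. Given $\rho \in PrFl(L)$, the filter $\uparrow\! f[\rho]$ and the ideal $\downarrow\! f[L\setminus\rho]$ of $K$ are disjoint: any common element would yield $a \in \rho$ and $b \notin \rho$ with $f(a) \le f(b)$, whence $a \le b$ by the order-embedding property, forcing $b \in \rho$, a contradiction. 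The prime-filter separation theorem then produces $\sigma \in PrFl(K)$ containing $f[\rho]$ and disjoint from $f[L\setminus\rho]$, so that $f^{-1}(\sigma) = \rho$.

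Finally I would identify $f^\delta$ with inverse image along $f^*$. Under the isomorphism $L^\delta \cong \mathcal{D}(PrFl(L),\supseteq)$ recalled in Section~\ref{sec:canextdl}, both $f^\delta$ and the map $D \mapsto (f^*)^{-1}(D)$ are complete lattice homomorphisms, and they agree on the image of $L$, each sending $a \in L$ to $\widehat{f(a)} = \{\sigma \,|\, f(a) \in \sigma\}$. As every element of $L^\delta$ is a join of meets of elements of $L$ and both maps preserve arbitrary joins and meets, they coincide. Since $f^*$ is surjective, its inverse-image map is injective, so $f^\delta$ is injective, hence an order-embedding. Translating back componentwise gives that $\tau^F$ is a surjection of internal locales. (Alternatively one can argue directly inside $L^\delta$, using that completely join-irreducible elements are completely join-prime in the completely distributive lattice $K^\delta$, together with the filter-element formula $f^\delta(x_\rho) = \bigwedge f[\rho]$ and the lifted prime filter $\sigma$.)

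The main obstacle is precisely the surjectivity of the dual map $f^*$: this is where the order-embedding (conservativity) hypothesis is genuinely used, and it relies on the prime-filter separation theorem for distributive lattices. Once that is established, identifying $f^\delta$ with an inverse-image functor and converting componentwise embeddings into a surjection of internal locales are routine.
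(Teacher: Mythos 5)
Your proof is correct, and its overall reduction is exactly the paper's: a surjection of internal locales is detected componentwise, so it suffices that each component $F_A^\delta \colon \S_\C^\delta(A) \to \S_\D^\delta(FA)$ be an order-embedding, which follows from conservativity (each $F_A$ is an order-embedding, by Definition~\ref{def:cons}) together with the fact that canonical extension preserves order-embeddings. The only real difference is at that last step: the paper simply invokes the preservation fact as known from the canonical-extension literature, whereas you prove it from scratch via the prime-filter picture, and your argument there is sound. The dual map $f^* \colon PrFl(K) \to PrFl(L)$ is well-defined since $f$ is a bounded lattice homomorphism; the disjointness of the filter $\uparrow\! f[\rho]$ and the ideal $\downarrow\! f[L\setminus\rho]$ is precisely where the embedding hypothesis enters; the prime filter separation theorem then gives surjectivity of $f^*$; and your identification of $f^\delta$ with $(f^*)^{-1}$ is legitimate because both are complete lattice homomorphisms (the paper records in Section~\ref{sec:canextdl} that $f^\delta$ is one for any lattice homomorphism $f$) agreeing on the dense image of $L$. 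What your longer route buys is self-containedness and an explicit witness of where conservativity is used (the separation step); what the paper's citation buys is brevity and independence from the concrete representation of $L^\delta$ as downsets of prime filters, since the preservation of embeddings also follows abstractly from the restriction-to-sublattice behaviour of canonical extension. One small point you implicitly share with the paper and could make explicit: monomorphisms of internal frames in $\Set^{\C^{\op}}$ are computed on underlying presheaves, hence pointwise, which is what licenses the componentwise reduction in the first place.
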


\begin{proof}
The natural transformation $\tau^F \colon \S_\D^{\delta} \after F \to \S_\C^{\delta}$ is a surjection of locales if and only if all the maps $\tau_A^F = F_A^\delta \colon \S_\C^{\delta(A)} \to \S_\D^\delta(FA)$ are order-embeddings. This is indeed the case as, for each $A$ in $\C$, $F_A \colon \S_\C(A) \to \S_\D(FA)$ is an order-embedding and this property is preserved under canonical extension.\QED
\end{proof}

To prove a similar relationship between Heyting functors and open maps of locales, we use the following property of adjunctions between partially ordered sets. 

\begin{lemma}
\label{lem:comadj}
Consider the following pairs of maps between partially ordered sets.
$$
\xymatrix{
A \ar@<1ex>[r]^-{f} & B \ar@<1ex>[l]^-{f'} \ar@<1ex>[r]^-{g} & C  \ar@<1ex>[l]^-{g'}
}
$$
If $f'$ is left adjoint to $f$ and $g'$ is left adjoint to $g$, then $f'\after g'$ is left adjoint to $g \after f$.
\end{lemma}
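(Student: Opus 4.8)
The plan is to prove the adjunction directly from its defining universal property for posets: an adjunction $\ell \dashv r$ between partially ordered sets amounts to the biconditional $\ell(y) \le x \iff y \le r(x)$. First I would record the two hypotheses in exactly this form. Since $f'$ is left adjoint to $f$, for all $a \in A$ and $b \in B$ we have $f'(b) \le a \iff b \le f(a)$; and since $g'$ is left adjoint to $g$, for all $b \in B$ and $c \in C$ we have $g'(c) \le b \iff c \le g(b)$.

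Next I would verify the required biconditional for the two composites. For $c \in C$ and $a \in A$, the goal is $(f' \after g')(c) \le a \iff c \le (g \after f)(a)$. Starting on the left, $f'(g'(c)) \le a$ is equivalent, by the first adjunction applied with $b = g'(c)$, to $g'(c) \le f(a)$; and this in turn is equivalent, by the second adjunction applied with $b = f(a)$, to $c \le g(f(a))$. Chaining these two equivalences yields precisely $(f' \after g')(c) \le a \iff c \le (g \after f)(a)$, which is exactly the assertion that $f' \after g'$ is left adjoint to $g \after f$.

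There is no genuine obstacle here; the proof is a two-step chain of biconditionals. The only points that require care are bookkeeping of the orientation of the two adjunctions --- namely that $f' \colon B \to A$ and $g' \colon C \to B$ are the left adjoints, so that the composite $f' \after g' \colon C \to A$ is indeed the candidate left adjoint to $g \after f \colon A \to C$ --- and checking that the substituted instances $b = g'(c)$ and $b = f(a)$ are legitimate elements of $B$, which holds by construction.
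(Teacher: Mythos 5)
Your proof is correct and coincides with the paper's own argument, which is exactly the same two-step chain of biconditionals $f'(g'(c)) \le a \iff g'(c) \le f(a) \iff c \le g(f(a))$ obtained by applying the two adjunctions in turn. Your additional bookkeeping remarks about the orientation of the adjoints are fine but add nothing beyond the paper's version.
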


\auxproof{
\begin{array}{rcl}
f'(g'(c)) \le a 
&\Leftrightarrow&
g'(c) \le f(a)\\
&\Leftrightarrow&
c \le g(f(a)).
\end{array}
}

\begin{proposition}
\label{lem:Heytopen}
Let $F \colon \C \to \D$ be a Heyting functor. Then $\tau^F \colon \S_\D^\delta \after F \to \S_\C^\delta$ is an open map of locales. 
\end{proposition}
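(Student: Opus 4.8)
The plan is to verify openness through the componentwise description of internal locale maps in $\Set^{\C^{\op}}$. Recall (as used in Proposition~\ref{prop:loctotop}) that $\tau^F$ is open iff the internal frame homomorphism $(\tau^F)^*$, whose component at $A$ is $F_A^\delta \colon \S_\C^\delta(A) \to \S_\D^\delta(FA)$, has an internal left adjoint $\sigma$ satisfying Frobenius. In a presheaf topos such an internal left adjoint amounts to a family of fibrewise left adjoints $\sigma_A \dashv F_A^\delta$ that is natural in $A$ (the Beck--Chevalley condition). So I would establish three things: existence of each $\sigma_A$, naturality, and Frobenius. Existence is immediate, since $F_A$ is a lattice homomorphism, so its canonical extension $F_A^\delta$ is a complete lattice homomorphism, in particular preserves arbitrary meets, and hence has a left adjoint $\sigma_A \colon \S_\D^\delta(FA) \to \S_\C^\delta(A)$.

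For naturality I would show, for every $A \xrightarrow{\alpha} B$ in $\C$, that $\S_\C^\delta(\alpha) \after \sigma_B = \sigma_A \after \S_\D^\delta(F\alpha)$, comparing right adjoints. Both composites preserve arbitrary joins and so are determined by their right adjoints. Since $\C$ and $\D$ are Heyting categories, the pullback maps have right adjoints $\forall_\alpha$ and $\forall_{F\alpha}$, and canonical extension preserves adjunctions, so $\S_\C^\delta(\alpha) \dashv (\forall_\alpha)^\delta$ and $\S_\D^\delta(F\alpha) \dashv (\forall_{F\alpha})^\delta$. Composing these adjunctions (the dual of Lemma~\ref{lem:comadj}) identifies the right adjoint of $\S_\C^\delta(\alpha) \after \sigma_B$ with $F_B^\delta \after (\forall_\alpha)^\delta$ and that of $\sigma_A \after \S_\D^\delta(F\alpha)$ with $(\forall_{F\alpha})^\delta \after F_A^\delta$. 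Thus naturality reduces to the single identity
$$
F_B^\delta \after (\forall_\alpha)^\delta = (\forall_{F\alpha})^\delta \after F_A^\delta .
$$
Here the Heyting hypothesis enters: since $F$ preserves universal quantification, $F_B \after \forall_\alpha = \forall_{F\alpha} \after F_A$. Applying $(\_)^\delta$ to both sides and using Proposition~\ref{prop:prescomp} to distribute it over each composite -- legitimate because the outer maps $F_B$ and $\forall_{F\alpha}$ preserve finite meets -- yields exactly this identity.

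For Frobenius the crucial point is that $F_A^\delta$ preserves the Heyting implication: $F_A$ is a Heyting algebra homomorphism, and the canonical extension of an implication-preserving map again preserves implication (exactly as in the proof that $(\_)^\delta$ restricts to first order hyperdoctrines). Granting this, for all $w \in \S_\C^\delta(A)$ one has
$$
\sigma_A(v) \wedge u \le w \iff \sigma_A(v) \le u \to w \iff v \le F_A^\delta(u \to w) \iff v \wedge F_A^\delta(u) \le F_A^\delta(w) \iff \sigma_A(v \wedge F_A^\delta(u)) \le w ,
$$
where the first and third equivalences are the meet--implication adjunctions in $\S_\C^\delta(A)$ and $\S_\D^\delta(FA)$ (the third also using $F_A^\delta(u \to w) = F_A^\delta(u) \to F_A^\delta(w)$), and the second and fourth use $\sigma_A \dashv F_A^\delta$. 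Letting $w$ range over all elements forces $\sigma_A(v \wedge F_A^\delta(u)) = \sigma_A(v) \wedge u$.

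I expect the naturality (Beck--Chevalley) step to be the main obstacle, as it is the only place that genuinely uses the Heyting structure, namely preservation of $\forall$, and it requires the mate computation together with Proposition~\ref{prop:prescomp}; existence of the $\sigma_A$ needs only coherence of $F$, and Frobenius is formal once implication is preserved. A minor point to confirm is the standard criterion that fibrewise left adjoints satisfying Beck--Chevalley do assemble into an internal left adjoint in $\Set^{\C^{\op}}$.
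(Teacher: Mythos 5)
Your proof is correct and takes essentially the same route as the paper: the componentwise left adjoints $\sigma_A$ exist because each $F_A^\delta$ is a complete lattice homomorphism, naturality is obtained by uniqueness of adjoints from the identity $F_B^\delta \after \forall_\alpha^\delta = \forall_{F\alpha}^\delta \after F_A^\delta$ (preservation of $\forall$ under canonical extension, composed via Lemma~\ref{lem:comadj}), and Frobenius comes from $F_A^\delta$ preserving the Heyting implication. The only difference is cosmetic: the paper phrases the Frobenius step as the same mate argument (the outer square of right adjoints $v \to \_$ and $F_A^\delta(v) \to \_$ commutes, hence the inner square of left adjoints does), where you unfold it into an explicit chain of Galois-connection equivalences.
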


\begin{proof}
The natural transformation $\tau^F$ is an open map of locales if and only if the internal frame morphism $(\tau^F)^*$ has an internal left adjoint in $\Set^{\C^{\op}}$ satisfying Frobenius. As the internal order in $\S_\C^\delta$ and $\S_\D^\delta \after F$ is computed component-wise, this left adjoint, if it exists, is given by taking component-wise left adjoints. For all $A \in \C$, $(\tau_A^F)^* = F_A^{\delta} \colon \S_\C^\delta(A) \to \S_\D^\delta(FA)$ is a complete lattice homomorphism and therefore it has a left adjoint $\sigma_A$. 

We have to prove that these maps $\sigma_A$ constitute a natural transformation, {\it i.e.}, a morphism in $\Set^{\C^{\op}}$. In order to do this, we rely on Lemma~\ref{lem:comadj} and use the fact that the morphisms involved in the naturality diagram all have right adjoints which interact appropriately. Let $A \xrightarrow{\alpha} B$ in $\C$. We have to show that the inner square of the following diagram commutes. 
$$
\xymatrix@R=3.5pc@C=3.5pc{
\S_\D^\delta(FB) \ar@<-1ex>[r]_-{\sigma_B}^-{{}_\top} \ar@<1ex>[d]^-{((F\alpha)^*)^\delta}_-{\,\,\,\scriptstyle{\vdash}} & \S_\C^\delta(B) \ar@<-1ex>[l]_-{F_B^\delta} \ar@<-1ex>[d]_-{(\alpha^*)^\delta}\\
\S_\D^\delta(FA) \ar@<1ex>[r]^-{\sigma_A}_-{{}^\bot} \ar@<1ex>[u]^-{\forall_{F\alpha}^\delta} & \S_\C^\delta(A) \ar@<1ex>[l]^-{F_A^{\delta}} \ar@<-1ex>[u]_-{\forall_{\alpha}^\delta}^-{\scriptstyle{\dashv}} 
}
$$
As $F$ is a Heyting functor, it preserves universal quantification and this property is preserved under canonical extension. This implies that the outer square commutes. Using Lemma~\ref{lem:comadj} and the uniqueness of adjoints, it follows that also the inner diagram commutes. This proves naturality of $\sigma$.

For the Frobenius condition, let $A \in \C$, $v \in \S_\C^{\delta}(A)$ and $w \in \S_\D^{\delta}(FA)$. We have to show
$$
\sigma_A(w \wedge F_A^\delta(v)) = \sigma_A(w) \wedge v.
$$
This is equivalent to the condition that, for all $v \in \S_\C^{\delta}(A)$, the inner square of the following diagram commutes.
$$
\xymatrix@R=3.5pc@C=3.5pc{
\S_\D^\delta(FA) \ar@<-1ex>[r]_-{\sigma_A}^-{{}_\top} \ar@<1ex>[d]^-{\_\, \wedge F_A^\delta(v)}_-{\,\,\,\scriptstyle{\vdash}} & \S_\C^\delta(A) \ar@<-1ex>[l]_-{F_A^\delta} \ar@<-1ex>[d]_-{\_\, \wedge v}\\
\S_\D^\delta(FA) \ar@<1ex>[r]^-{\sigma_A}_-{{}^\bot} \ar@<1ex>[u]^-{F_A^\delta(v) \to \,\_} & \S_\C^\delta(A) \ar@<1ex>[l]^-{F_A^{\delta}} \ar@<-1ex>[u]_-{v \to \,\_}^-{\scriptstyle{\dashv}} 
}
$$
As $F$ is Heyting functor, $F_A$ preserves implication and this property is preserved under canonical extension. Hence, the outer square commutes. As above, it follows that the inner square commutes as well then. See also Proposition V.1 in \cite{JoTi}.  \QED
\end{proof}

The above results add up to a proof of Theorem~\ref{thm:mortt}, formulated at the beginning of this section.

\begin{proof}[Proof of Theorem~\ref{thm:mortt}] Let $F \colon \C \to \D$ be a coherent functor. Suppose $F$ is conservative. Then, by Proposition~\ref{lem:cohtosurj}, $\tau^F \colon \S_\D^{\delta} \after F \to \S_\C^{\delta}$ is a internal surjection of locales. Hence, by Proposition~\ref{prop:loctotop}, the induced geometric morphism $Sh(\S_\D^{\delta} \after F) \to \S_\C^{\delta}$ is a geometric surjection. Finally, it follows from Proposition~\ref{prop:factortt} that the localic part of $T(F)$, and therefore $T(F)$ itself, is a geometric surjection. The second claim follows similarly from the mentioned propositions and Proposition~\ref{lem:Heytopen}.\QED
\end{proof}

\subsection{The topos of types and models of $\C$}
\label{sec:ttmod}
An internal locale $X$ in a topos $\mathcal{E}$ induces a localic geometric morphism $Sh_{\mathcal{E}}(X) \to \mathcal{E}$, where $Sh_{\mathcal{E}}(X)$ is the topos of internal sheaves over $X$. In particular, for a coherent category $\C$, the internal locale $\S_\C^\delta$ in $Sh(\C, J_{coh}) = \widetilde{\C}$ induces a localic geometric morphism
\begin{equation}
\label{eq:phit}
\phi_t \colon T(\C) = Sh_{\widetilde{\C}}(\S_\C^\delta) \to Sh(\C, J_{coh}).
\end{equation}
The new main result of this section is Theorem~\ref{thm:hcfev}. Here we prove that the geometric morphism $\phi_t$ is the localic part of a geometric morphism which naturally arises from the class of models of $\C$ (in $\Set$).\footnote{I thank Steve Awodey for suggesting this.} 

Let $\C$ be a coherent category. We write $Mod(\C)$ for the category of coherent functors $\C \to \Set$ (models of $\C$) with natural transformations. Now let $\Sp$ be a small full subcategory of $Mod(\C)$. There is a natural {\it evaluation functor} $ev \colon \C \to \Set^\Sp$ which sends an object $A \in \C$ to the functor
$$
\begin{array}{rcl}
ev(A) \colon \Sp &\to& \Set\\
M &\mapsto & M(A)\\
M \xrightarrow{\sigma} N &\mapsto& M(A) \xrightarrow{\sigma_A} N(A).
\end{array}
$$
A morphism $A \xrightarrow{\alpha} B$ in $\C$ is sent to the natural transformation $ev(A) \xrightarrow{ev(\alpha)}  ev(B)$, whose component at $M \in Mod(\C)$ is $M(A) \xrightarrow{M(\alpha)} M(B)$.
The following proposition is due to Joyal. A proof may be found in \cite{MakRey77}, see 6.3.5.
\auxproof{
One has to check that the functor $ev$ is well-defined. As an example we show that, for $A \xrightarrow{\alpha} B$, $ev(\alpha)$ is a natural transformation and we leave the other details to the reader. Let $M \xrightarrow{\sigma} N$ be a morphism in $Mod(\C)$ and consider
$$
\xymatrix@C=4pc{
ev(A)(M) \ar[d]_-{ev(\alpha)_M} \ar[r]^-{ev(A)(\sigma)} & ev(A)(N) \ar[d]^-{ev(\alpha)_N}\\
ev(B)(M) \ar[r]_-{ev(B)(\sigma)} & ev(B)(N)
}
$$
Filling out the definitions and using the naturality of $\sigma$ yields
$$
\begin{array}{rcl}
ev(\alpha)_N \after ev(A)(\sigma) &=& N(\alpha) \after \sigma_A\\
&=&
\sigma_B \after M(\alpha)\\
&=&
ev(B)(\sigma) \after ev(\alpha)_M.
\end{array}
$$
}

\begin{proposition}
The evaluation functor $ev \colon \C \to \Set^\Sp$ is coherent.
\end{proposition}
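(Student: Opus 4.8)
The plan is to reduce the statement to a single structural fact about the functor category $\Set^\Sp$, namely that its coherent structure is computed \emph{pointwise}, together with the assumption that every object of $\Sp$ is a model, i.e. a coherent functor $\C \to \Set$. Recall that $\Set^\Sp$ is a Grothendieck topos in which finite limits, all colimits, and therefore images and finite joins of subobjects (including the bottom subobject) are computed objectwise over $\Sp$. Concretely, for each $M \in \Sp$ evaluation at $M$ gives a functor $\mathrm{ev}_M \colon \Set^\Sp \to \Set$ which preserves all of these constructions, and by definition of $ev$ the composite $\mathrm{ev}_M \after ev \colon \C \to \Set$ is precisely the coherent functor $M$. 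So the strategy is to transport each of the three defining preservation properties of a coherent functor through these evaluations.

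First I would verify that $ev$ preserves finite limits. Given a finite limit cone in $\C$, applying $ev$ and then $\mathrm{ev}_M$ yields the image of the cone under $M$, which is a limit cone in $\Set$ because $M$ preserves finite limits; since limits in $\Set^\Sp$ are pointwise, $ev$ preserves the finite limit. In particular $ev$ preserves monomorphisms, so for a subobject $m \colon U \hookrightarrow A$ in $\C$ the arrow $ev(m)$ is a subobject of $ev(A)$ whose value at each $M$ is the subobject $M(U) \hookrightarrow M(A)$ of $M(A)$.

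Next I would treat images and finite joins of subobjects in the same pointwise manner. For $\alpha \colon A \to B$ in $\C$, each $M$ carries the image factorization of $\alpha$ to the image factorization of $M(\alpha)$, and as images in $\Set^\Sp$ are computed pointwise, $ev$ sends the image of $\alpha$ to the image of $ev(\alpha)$. Likewise, for subobjects $U, V$ of $A$ in $\C$ one has $M(U \vee V) = M(U) \vee M(V)$ and $M(0) = 0$ at every $M \in \Sp$, since $M$ is coherent; joins of subobjects in $\Set^\Sp$ being pointwise as well, it follows that $ev(U \vee V) = ev(U) \vee ev(V)$ and that $ev$ preserves the bottom subobject. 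Hence $ev$ preserves finite limits, images and finite joins of subobjects, which is exactly the statement that $ev$ is coherent.

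There is no genuinely hard step here: the entire argument rests on the observation that the coherent structure of the functor topos $\Set^\Sp$ is inherited pointwise from $\Set$, so that $ev$ enjoys precisely those preservation properties that already hold at each point $M$, and these hold by the very definition of a model as a coherent functor. The only point deserving a word of care is that preservation of finite limits by $ev$ already yields pointwise preservation of the (regular epi, mono) factorizations, so that the \emph{stability} of images need not be verified separately for $ev$; it is automatic once finite limits and images are preserved.
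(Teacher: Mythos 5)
Your proof is correct, but note that the paper does not actually prove this proposition at all: it attributes the result to Joyal and refers the reader to Makkai--Reyes \cite{MakRey77}, 6.3.5. Your pointwise argument is exactly the standard verification that fills in this citation, and every step is sound: limits, images and finite joins of subobjects in the functor category $\Set^\Sp$ are computed objectwise, each evaluation at $M \in \Sp$ composed with $ev$ gives back the coherent functor $M$, and a cone (resp.\ factorization, resp.\ join) in $\Set^\Sp$ is limiting (resp.\ an image, resp.\ a join) iff it is so at every $M$. Your closing remark is also well taken: stability of images and joins is a property of the target category, automatic in the topos $\Set^\Sp$, so it imposes no extra preservation condition on $ev$. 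Two small points of comparison. First, the cited result in Makkai--Reyes packages coherence together with the genuinely hard content of Joyal's theorem, namely conservativity of $ev$ (which amounts to a completeness theorem and, in the present paper, is invoked later only under the hypotheses (M1)--(M3)); your proof correctly isolates coherence as the soft, purely formal part. Second, there is a preliminary well-definedness check that your argument quietly presupposes: that $ev(A)$ is a functor $\Sp \to \Set$ and that $ev(\alpha)$ is a natural transformation (this uses naturality of the morphisms of $\Sp$, i.e.\ that for $\sigma \colon M \to N$ and $\alpha \colon A \to B$ one has $N(\alpha) \after \sigma_A = \sigma_B \after M(\alpha)$); the paper verifies this in a suppressed auxiliary note, and a complete write-up should include one sentence to this effect before the preservation arguments begin.
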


In particular, the evaluation functor induces a geometric morphism $\phi_{ev} \colon \Set^\Sp \to Sh(\C, J_{coh})$. Concretely, this geometric morphism is given by the adjoint pair $L_{ev} \colon Sh(\C, J_{coh})\leftrightarrows \Set^\Sp \colon R_{ev}$. Here $L_{ev}$ is the free colimit extension of $ev$ restricted to $Sh(\C, J_{coh})$:
$$
\xymatrix{
\C \ar[r]^-{y} \ar[rd]_-{ev} & \Set^{\C^{\op}} \ar@{.>}[d] \ar@<0.2pc>[r]^-{a} & Sh(\C, J_{coh}) \ar[ld]^-{L_{ev}} \ar@<0.2pc>[l]^-{i}\\
& \Set^\Sp
}
$$
and $R_{ev}$ may be described as
$$
\begin{array}{rcl}
R_{ev} \colon \Set^\Sp &\to& Sh(\C, J_{coh})\\
H &\mapsto& Hom(ev(\_), H).
\end{array}
$$

In the remainder the notion of \emph{type} will play an important role. Remark that we use the word `type' as in model theory (and not as in type theory).

\begin{definition}
For a model $M$, $A \in \C$ and $a \in M(A)$ we define
$$
t_A(a,M) = \{U \in Sub_\C(A)\,|\, a \in M(U)\}
$$
and we call $t_A(a,M)$ the \emph{type of $a$ in $M(A)$}.
\end{definition}

From now on we restrict our attention to full subcategories $\Sp$ of $Mod(\C)$ satisfying the following conditions (see also \cite{Mak}).
\begin{itemize}
\item[(M1)] for all $M \in \Sp$, $M$ is a p-model (of $\C$ in $\Set$, as in Definition~\ref{def:pmod});
\item[(M2)] for all $A \in \C$, $\rho$ prime filter in $Sub_\C(A)$, there exist $M \in \Sp$ and $a \in M(A)$ s.t. $\rho = t_A(a, M)$;
\item[(M3)] for all $A \in \C$, $M, N \in \Sp$, $a \in M(A)$, $b \in N(A)$, if $b \in \bigwedge\{N(U)\,|\, U \in t_A(a, M)\}$, then there exists a morphism $h \colon M \to N$ in $\Sp$ s.t. $b =h_A(a)$.
\end{itemize}

Let us first remark that such classes of models indeed exist. For example, the class of $\lambda$-special models of $\C$ satisfies the above requirements.\footnote{Here $\lambda$ is a cardinal depending on the size of $\C$.}
The notion of special model is a generalisation of the notion of saturated model, in the sense that any saturated model is special. Only under the assumption of the Generalised Continuum Hypothesis, every consistent theory has a saturated model. This assumption is not required for the existence of special models. For more background on special models the reader is refered to 
\cite{CK, Hodg}.

For a coherent category $\C$, any class of models $\Sp$ which satisfies the above requirements contains enough models to faithfully represent $\C$. That is, in this case, the evaluation functor $\C \to \Set^\Sp$ is conservative and therefore the induced geometric morphism 
$$
\phi_{ev} \colon \Set^\Sp \to Sh(\C,J_{coh})
$$ 
is a surjection. We will show in Theorem~\ref{thm:hcfev} that the geometric morphism $\phi_t \colon T(\C) \to Sh(\C,J_{coh})$ (see~\eqref{eq:phit}) is the localic part of $\phi_{ev}$. Before we embark on the proof, we first make a short remark about the logical intuition behind this statement. A geometric morphism $\mathcal{F} \to \mathcal{E}$ may be viewed as an expansion of the theory of $\mathcal{E}$ with sorts, function symbols, relation symbols and axioms. In case the geometric morphism is localic, no new sorts are added to the theory. That is, a localic geometric morphism corresponds to an expansion of the theory with only function symbols, relation symbols and axioms. When forming the hyperconnected localic factorisation of a geometric morphism, one splits up the corresponding expansion of the theory in two steps: first one only adds the function and relation symbols which concern sorts of the original theory and are definable in the new theory, and as axioms one adds the statements which are expressible in the language of the original theory and are derivable in the new theory; then one completes the expansion of the theory in the second step. From this point of view, Theorem~\ref{thm:hcfev} intuitively states that, for a coherent category $\C$, the topos of types of $\C$ contains the information about the theory corresponding  to $\C$ which may be derived when studying the models of $\C$. To prove Theorem~\ref{thm:hcfev}, we start with a lemma.

\begin{lemma}
\label{prop:evpmod}
Let $\Sp$ be a full subcategory of $Mod(\C)$ satisfying (M1)-(M3). The evaluation functor $ev \colon \C \to \Set^\Sp$ is a p-model.
\end{lemma}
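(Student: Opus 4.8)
The plan is to exploit the fact that every operation occurring in the p-model equation of Definition~\ref{def:pmod} is computed \emph{pointwise} in the presheaf topos $\Set^\Sp$. This reduces the required identity to its value at each $M \in \Sp$, where it becomes exactly the p-model condition for the individual model $M \colon \C \to \Set$, i.e.\ hypothesis (M1).

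First I would check that $\Set^\Sp$ is an object of $\Coh^+$, so that the notion of p-model applies to $ev$. A subobject of a functor $F \colon \Sp \to \Set$ is a subfunctor, and arbitrary meets and joins of subfunctors are computed pointwise, as intersections and unions inside the powersets $\powerset(F(M))$. Since complete distributivity is an identity in joins and meets, it is inherited from these pointwise powerset lattices; and every subfunctor is the join of the compact subfunctors generated by its elements, so each $Sub_{\Set^\Sp}(F)$ is completely distributive and algebraic. Pullback functors in a topos have both adjoints and hence preserve all joins. Combined with the coherence of $ev$ (the preceding proposition, due to Joyal), this exhibits $ev$ as a coherent functor into $\Coh^+$.

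Next, fix $A \xrightarrow{\alpha} B$ in $\C$ and a prime filter $\rho$ in $Sub_\C(A)$. Because meets of subobjects, and images (hence the left adjoints $\exists$), are computed pointwise in $\Set^\Sp$, the equation
$$\exists_{ev(\alpha)}\Bigl(\bigwedge\{ev(U)\mid U\in\rho\}\Bigr)=\bigwedge\{\exists_{ev(\alpha)}(ev(U))\mid U\in\rho\}$$
holds as subobjects of $ev(B)$ if and only if it holds after evaluating at each $M \in \Sp$ (note that both sides lie below the image subobject $\exists_{ev(\alpha)}(ev(A))$, so there is no discrepancy between the two ambient subobject lattices in which the meets are taken). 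I would then compute both sides at $M$. Using $ev(U)(M) = M(U) \subseteq M(A)$ and that the pointwise action of $\exists_{ev(\alpha)}$ at $M$ is direct image along $M(\alpha)$, the left-hand side at $M$ is $M(\alpha)\bigl[\bigcap_{U\in\rho} M(U)\bigr]$, while the right-hand side at $M$ is $\bigcap_{U\in\rho} M(\alpha)[M(U)]$; since $M$ is coherent one moreover has $M(\alpha)[M(U)] = M(\exists_\alpha U)$, which is a useful cross-check.

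Finally, these two subsets of $M(B)$ coincide precisely because, by (M1), each $M \in \Sp$ is a p-model of $\C$ in $\Set$: in $\Set$ the operation $\exists_{M(\alpha)}$ is direct image and meets are intersections, so the defining equation of Definition~\ref{def:pmod} for $M$ reads exactly $M(\alpha)[\bigcap_{U\in\rho} M(U)] = \bigcap_{U\in\rho} M(\alpha)[M(U)]$. As this holds for every $M$, the displayed equation holds in $\Set^\Sp$, and $ev$ is a p-model. The only genuine content of the argument is the pointwise computation of (possibly infinite) meets and of images in the presheaf topos; once this is in place the verification collapses to hypothesis (M1), and I expect that (M2)--(M3) are not actually needed for this particular lemma, entering only in the later results of the section.
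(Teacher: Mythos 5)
Your proposal is correct and matches the paper's proof essentially step for step: the paper likewise observes that $\exists_{ev(\alpha)}$ and arbitrary meets of subfunctors are computed pointwise in $\Set^\Sp$, evaluates both sides of the p-model equation at each $M \in \Sp$ to obtain $M(\alpha)\bigl[\bigwedge\{M(U)\mid U\in\rho\}\bigr]$ and $\bigwedge\{M(\alpha)[M(U)]\mid U\in\rho\}$, and concludes by (M1). Your added verification that $\Set^\Sp \in \Coh^+$ and your observation that (M2)--(M3) are not needed here are both accurate, the latter being implicit in the paper's proof as well.
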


\begin{proof}
Let $A \xrightarrow{\alpha} B$ in $\C$ and $\rho$ a prime filter in $Sub_\C(A)$. We have to show that the following two subobjects of the functor $\exists_{ev(\alpha)}(ev(A))\colon \Sp \to \Set$,
$$
\begin{array}{rcl}
G_1 &:=& \exists_{ev(\alpha)}(\bigwedge\{ev(U)\,|\,U \in \rho\}) \\
G_2 &:=& \bigwedge\{\exists_{ev(\alpha)}(ev(U))\,|\, U \in \rho\},
\end{array}
$$
are equal. We compute $G_1$ and $G_2$ at $M \in \Sp$. We use the fact that in a presheaf category $\Set^{\cat{D}^{\op}}$, for a natural transformation $G \xrightarrow{\tau} H$, the action of the left adjoint $\exists_\tau \colon Sub(G) \to Sub(H)$ is given by taking component-wise direct images, {\it i.e.}, for $G' \in Sub(G)$, $D \in \D$,
$
\exists_\tau(G')(D) = \tau_D[G'(D)].
$

$$
\begin{array}{rcll}
G_1(M) 
&\cong&
\exists_{ev(\alpha)}(\bigwedge\{ev(U)\,|\,U \in \rho\})(M)\\
&\cong&
ev(\alpha)_M[(\bigwedge\{ev(U)\,|\,U \in \rho\})(M)]&\\
&\cong&
M(\alpha)[\bigwedge\{ev(U)(M)\,|\,U \in \rho\}]\\
&\cong&
M(\alpha)[\bigwedge\{M(U)\,|\,U \in \rho\}]
\end{array}
$$
and
$$
\begin{array}{rcll}
G_2(M) 
&\cong&
(\bigwedge\{\exists_{ev(\alpha)}(ev(U))\,|\, U \in \rho\})(M)\\
&\cong&
\bigwedge\{(\exists_{ev(\alpha)}(ev(U)))(M)\,|\,U \in \rho\}\\
&\cong&
\bigwedge\{ev(\alpha)_M[(ev(U))(M)]\,|\,U \in \rho\}\\
&\cong&
\bigwedge\{M(\alpha)[M(U)]\,|\,U \in \rho\}.
\end{array}
$$
As, by (M1), for all $M \in \Sp$, $M$ is a p-model, $G_1(M) \cong G_2(M)$. \QED
\end{proof}

\begin{theorem}
\label{thm:hcfev}
Let $\Sp$ be a full subcategory of $Mod(\C)$ satisfying (M1)-(M3). The hyperconnected localic factorisation of $\phi_{ev} \colon \Set^\Sp \to Sh(\C, J_{coh})$ is given by
$$
\xymatrix{
&T(\C) \ar[rd]^-{\phi_t}\\
\Set^\Sp \ar[rr]_-{\phi_{ev}} \ar[ru] && Sh(\C, J_{coh})
}
$$
\end{theorem}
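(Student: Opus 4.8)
The plan is to exhibit the evident comparison morphism $\psi \colon \Set^\Sp \to T(\C)$ making the triangle commute, to check that $\psi$ is hyperconnected while $\phi_t$ is localic, and then to invoke uniqueness of the hyperconnected--localic factorisation (as already used in the proof of Proposition~\ref{prop:factortt}). Since $\phi_t$ is induced by an internal locale it is automatically localic, so the whole content lies in constructing $\psi$ and verifying that it is hyperconnected.

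To build $\psi$, recall from Lemma~\ref{prop:evpmod} that $ev \colon \C \to \Set^\Sp$ is a p-model. Moreover $\Set^\Sp$ lies in $\Coh^+$: a subobject of $G \in \Set^\Sp$ is a down-closed subset of its category of elements, so $Sub_{\Set^\Sp}(G)$ is a down-set lattice, hence completely distributive and algebraic, and pullback is a pointwise preimage and therefore preserves all joins. Thus the universal property of Theorem~\ref{thm:extcohcat} applies and yields a morphism $\widetilde{ev} \colon \C^\delta \to \Set^\Sp$ in $\Coh^+$ with $\widetilde{ev} \after E_\C = ev$. Being coherent, $\widetilde{ev}$ is flat, and since it preserves images and arbitrary joins of subobjects it sends $J_{coh^+}$-covers to epimorphic families. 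Using the site $(\C^\delta, J_{coh^+})$ for $T(\C)$ (the remark following Theorem~\ref{thm:inttt}), Diaconescu's theorem turns $\widetilde{ev}$ into a geometric morphism $\psi \colon \Set^\Sp \to T(\C)$ with $\psi^* y \cong \widetilde{ev}$ on representables. As $\phi_t$ is induced by the morphism of sites $E_\C \colon (\C, J_{coh}) \to (\C^\delta, J_{coh^+})$, functoriality gives $\phi_t \after \psi \simeq \phi_{ev}$, the composite being induced by $\widetilde{ev} \after E_\C = ev$.

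The \textbf{main obstacle} is to prove that $\psi$ is hyperconnected, for which I would show that the canonical comparison $\psi_*(\Omega_{\Set^\Sp}) \to \Omega_{T(\C)}$ is an isomorphism (the characterisation used in Proposition~\ref{prop:factortt}). Evaluating both sheaves on the generating objects $(A,x)$ of $\tau\C$, with $x \in \mathcal{J}^\infty(\S_\C^\delta(A))$ the completely join-irreducible $x = x_{\rho_x} = \bigwedge \rho_x$ attached to a prime filter $\rho_x$ of $Sub_\C(A)$, reduces the claim to showing that $\psi^*$ induces a bijection $Sub_{T(\C)}(y(A,x)) \cong Sub_{\Set^\Sp}(\widetilde{ev}(A,x))$. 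The left-hand side is $\downarrow_{\S_\C^\delta(A)}\! x$. For the right-hand side, a direct computation using that $\widetilde{ev}$ preserves all meets and joins gives, for a model $M$ and $a \in M(A)$, that $a \in \widetilde{ev}(A,w)(M)$ iff $\bigwedge t_A(a,M) \le w$; in particular $\widetilde{ev}(A,w)$ corresponds to the down-set $\{x' \in \mathcal{J}^\infty(\S_\C^\delta(A)) : x' \le w\}$, and $\widetilde{ev}(A,x)(M) = \{a \in M(A) : \rho_x \subseteq t_A(a,M)\}$.

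This is where (M1)--(M3) do the work. By (M2) every prime filter $\rho$ of $Sub_\C(A)$ is realised as $t_A(a,M)$ for some $M \in \Sp$ and $a \in M(A)$, and by (M3) any two realisations of the same type are linked by a morphism of $\Sp$ in either direction; consequently, for an \emph{arbitrary} subfunctor $S \le \widetilde{ev}(A,x)$, membership $a \in S(M)$ depends only on $t_A(a,M)$, and the set of types occurring in $S$ is upward closed under inclusion of prime filters. Translating to completely join-irreducibles via $x_\rho \le x_{\rho'} \Leftrightarrow \rho' \subseteq \rho$, such an $S$ corresponds to a down-closed subset of $\{x' \in \mathcal{J}^\infty(\S_\C^\delta(A)) : x' \le x\}$, hence to a unique $w \le x$ with $S = \widetilde{ev}(A,w)$, which is precisely the inverse of $\psi^*$. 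Thus $Sub_{\Set^\Sp}(\widetilde{ev}(A,x)) \cong \downarrow_{\S_\C^\delta(A)}\! x$ naturally in $(A,x)$, the comparison map is an isomorphism, and $\psi$ is hyperconnected. With $\phi_t$ localic, uniqueness of the hyperconnected--localic factorisation identifies $\phi_t \after \psi$ with the factorisation of $\phi_{ev}$, proving the theorem.
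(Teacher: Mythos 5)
Your proposal is correct, but it reaches the theorem by a genuinely different decomposition than the paper. The paper never constructs the hyperconnected leg: it invokes the standard fact that the localic part of $\phi_{ev}$ is the topos of sheaves on the internal frame $R_{ev}(\Omega_{\Set^\Sp}) \colon A \mapsto Sub_{\Set^\Sp}(ev(A))$ in $Sh(\C, J_{coh})$, and then proves a single internal frame isomorphism $\S_\C^\delta \cong R_{ev}(\Omega_{\Set^\Sp})$, obtained by extending $\sigma_A \colon U \mapsto ev(U)$ along canonical extension via the natural isomorphism \eqref{eq:natdl}, checking it is an internal frame homomorphism using Proposition~\ref{prop:comjpm} together with Lemma~\ref{prop:evpmod}, an embedding using (M2), and surjective using (M3). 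You instead build the hyperconnected part $\psi$ explicitly from the universal property of canonical extension (Theorem~\ref{thm:extcohcat} applied to the p-model $ev$, after correctly checking $\Set^\Sp \in \Coh^+$) together with the site $(\C^\delta, J_{coh^+})$ for $T(\C)$, and verify hyperconnectedness via the $\Omega$-comparison evaluated at the join-irreducible objects $(A,x)$. The two arguments converge exactly at the crux: your computation $Sub_{\Set^\Sp}(\widetilde{ev}(A,x)) \cong\, \downarrow_{\S_\C^\delta(A)}\! x$ --- with $a \in \widetilde{ev}(A,w)(M)$ iff $\bigwedge t_A(a,M) \le w$, injectivity from (M2), and surjectivity from (M3) via the down-closedness of the set of realised types, where complete join-primeness of the $x_\rho$ in the canonical extension does the lattice-theoretic work --- is, fibrewise over the join-irreducibles, the same as steps (iii) and (iv) of the paper's proof. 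What your route buys is conceptual: it exhibits Theorem~\ref{thm:extcohcat}, which the paper's proof never invokes, as the engine behind the factorisation, identifying the hyperconnected leg as the sheaf-level incarnation of $\widetilde{ev}$. What the paper's route buys is economy of verification: your construction asserts without proof that $\phi_t$ is induced by the site morphism $E_\C \colon (\C, J_{coh}) \to (\C^\delta, J_{coh^+})$, which requires tracing the external site $\C \ltimes \S_\C^\delta$ of Theorem~\ref{thm:inttt} through the comparison with $\C^\delta$, and similarly the identification of $\psi^*$ on sheafified representables with $\widetilde{ev}$; these are standard but genuine site-theoretic checks, all of which the paper's internal-locale formulation sidesteps, needing only the one isomorphism of internal frames.
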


\begin{proof}
Let $\Omega_{\Set^\Sp}$ be the subobject classifier of $\Set^\Sp$. The hyperconnected localic factorisation of $\phi_{ev}$ is given by
$$
\xymatrix{
&Sh_{Sh(\C, J_{coh})}(R_{ev}(\Omega_{\Set^\Sp})) \ar[rd]\\
\Set^\Sp \ar[rr]_-{\phi_{ev}} \ar[ru] && Sh(\C, J_{coh})
}
$$
As the topos of types $T(\C)$ is the topos of sheaves over the internal locale $\S_\C^\delta$ in $Sh(\C, J_{coh})$, it suffices to show $\S_\C^\delta \cong R_{ev}(\Omega_{\Set^\Sp})$ (as locales in $Sh(\C, J_{coh}$)). Recall that, for $A \in \C$,
$$
\begin{array}{rcl}
R_{ev}(\Omega_{\Set^\Sp})(A) &=&
Hom_{\Set^\Sp}(ev(A), \Omega_{\Set^\Sp})\\
&=&
Sub_{\Set^\Sp}(ev(A)).
\end{array}
$$
Let $\sigma \colon \S_\C \to R_{ev}(\Omega_{\Set^\Sp})$ be the natural transformation given by, for $A \in \C$,
$$
\begin{array}{rcll}
\sigma_A \colon \S_\C(A) &\to& Sub(ev(A)) = R_{ev}(\Omega_{\Set^\Sp})(A)\\
U &\mapsto& ev(U).
\end{array}
$$
Note that naturality of $\sigma$ follows from the fact that he evaluation functor $ev \colon \C \to \Set^\Sp$ preserves finite limits, whence, for $A \xrightarrow{\alpha} B$ in $\C$ and $U \in Sub_\C(A)$,
$$
ev(\alpha^*(U)) = ev(\alpha)^*(ev(U)) \,\,\,\text{in $Sub(ev(A))$}.
$$
As $\sigma_A$ is a lattice homomorphism and $R_{ev}(\Omega_{\Set^\Sp})(A) \in \DL^+$, this map extends uniquely to a complete lattice homomorphism $\overline{\sigma_A} \colon \S_\C^\delta (A) \to R_{ev}(\Omega_{\Set^\Sp})(A)$, given by the natural isomorphism \eqref{eq:natdl}. We will prove the theorem by showing that the components $\overline{\sigma_A}$ constitute an internal frame isomorphism $\overline{\sigma} \colon \S_\C^\delta \to R_{ev}(\Omega_{\Set^\Sp})$. 

\auxproof{
Proof of the fact that $Sub_{\Set^\Sp}(ev(A)) \in DL^+$: As meets and joins in $Sub_{\Set^\Sp}(ev(A))$ are computed component-wise, it is immediate that this is a completely distributive lattice. To prove that it is join-generated by its completely join-irreducible elements, let $H \hookrightarrow ev(A)$. Define, for $N \in \Sp$ and $a \in H(N)$, $H_{N,a}$ to be the smallest subsheaf of $ev(A)$ such that $a \in H_{N,a}(N)$. Then $H_{N,a}$ is completely join-irreducible in $Sub_{\Set^\Sp}(ev(A))$ and $H = \bigvee \{H_{N,a}\,|\,N \in \Sp, a \in H(N)\}$.
}

\begin{enumerate}
\renewcommand{\theenumi}{\roman{enumi}}
\item $\overline{\sigma}$ is a natural transformation.\\
Let $A \xrightarrow{\alpha} B$ in $\C$. We have to show that the following diagram commutes.
$$
\xymatrix@C=3pc{
S_\C^\delta(B) \ar[r]^-{(\alpha^*)^\delta} \ar[d]_-{\overline{\sigma_B}} & S^\delta(A) \ar[d]^-{\overline{\sigma_A}}\\
Sub(ev(B)) \ar[r]_-{ev(\alpha)^*} & Sub(ev(A))
}
$$
As $ev(\alpha)^*$ is a complete lattice homomorphism, this follows from the naturality of $\overline{(\_)}$ and the naturality of $\sigma$:
$$
\overline{\sigma_A} \after (\alpha^*)^\delta \,=\, \overline{\sigma_A \after \alpha^*} \,=\, \overline{ev(\alpha)^* \after \sigma_B} \,=\, ev(\alpha)^* \after \overline{\sigma_B}.
$$

\item $\overline{\sigma}$ is an internal frame homomorphism.\\
Let $A \in \C$. As $\overline{\sigma_A}$ is a morphism in $\DL^+$, {\it i.e.}, a compete lattice homomorphism, it is in particular a frame homomorphism. It is left to show that $\overline{\sigma}$ preserves existential quantification. Let $A \xrightarrow{\alpha} B$ be a morphism in $\C$ and consider
$$
\xymatrix{
S_\C^\delta(A) \ar[r]^-{(\exists_\alpha)^\delta} \ar[d]_-{\overline{\sigma_A}} & S^\delta(B) \ar[d]^-{\overline{\sigma_B}}\\
Sub(ev(A)) \ar[r]_-{\exists_{ev(\alpha)}} & Sub(ev(B))
}
$$
We use Proposition~\ref{prop:comjpm} to prove commutativity of the above diagram.
Recall that, for $U \in Sub_\C(A)$, $\sigma_A(U) = ev(U)$. From the fact that $ev \colon \C \to \Set^\Sp$ is a coherent functor it follows that, for $U \in Sub_\C(A)$, $ev(\exists_\alpha U) = \exists_{ev(\alpha)}(U)$, {\it i.e.}, $\sigma_B \after \exists_{\alpha} = \exists_{ev(\alpha)} \after \sigma_A$.
As, by Lemma~\ref{prop:evpmod}, $ev$ is a p-model, we have, for every prime filter $\rho$ in $Sub_\C(A)$,
$$
\exists_{ev(\alpha)}(\bigwedge\{ev(U)\,|\,U \in \rho\}) = \bigwedge \{\exists_{ev(\alpha)}(ev(U))\,|\,U \in \rho\}.
$$
Hence, Proposition~\ref{prop:comjpm} applies, from which it follows that the diagram commutes.

\item For all $A \in \C$, $\overline{\sigma_A}$ is an embedding.\\
Let $u, v \in \S_\C^\delta(A)$ with $\overline{\sigma_A}(u) \le \overline{\sigma_A}(v)$. We have to show $u \le v$. As $\S_\C^\delta(A)$ is join generated by its completely join-irreducible elements, it suffices to show that, for all $x \in J^\infty(S_\C^\delta(A))$, $x \le u$ implies $x \le v$. Let $x \in J^\infty(S_\C^\delta(A))$ with $x \le u$. Consider
$$
\begin{array}{rcl}
\rho_x &=& \{U \in Sub_\C(A)\,|\,x \le U\}.
\end{array}
$$
By property (M2) of $\Sp$ there exist $M \in \Sp$ and $a \in M(A)$ s.t. $\rho_x = \{U \in Sub_\C(A)\,|\,a \in M(U)\}$. Note that
$$
\begin{array}{rcl}
\overline{\sigma_A}(u)(M) 
&=&
\bigvee\{\overline{\sigma_A}(z)\,|\,u \ge z \in J^\infty(S_\C^\delta(A))\}(M)\\
&=&
\bigvee\{\bigwedge\{ev(U)(M)\,|\,U \in \rho_z\}\,|\,u \ge z \in  J^\infty(S_\C^\delta(A))\}\\
&=&
\bigvee\{\bigwedge\{M(U)\,|\,U \in \rho_z\}\,|\,u \ge z \in  J^\infty(S_\C^\delta(A))\}.
\end{array}
$$
In particular $u \ge x$ and $\bigwedge\{M(U)\,|\,U \in \rho_x\} = \bigwedge\{M(U)\,|\,a \in M(U)\}$. Hence $a \in \overline{\sigma_A}(u)(M)$.

As, by assumption, $\overline{\sigma_A}(u) \le \overline{\sigma_A}(v)$, $a \in \overline{\sigma_A}(v)(M)$. Hence, there exists $z \in J^\infty(S_\C^\delta(A))$ s.t. $v \ge z$ and $a \in \bigwedge\{M(U)\,|\,U \in \rho_z\}$. It follows that
$
\rho_z \subseteq \{U \in Sub_C(A)\,|\,a \in M(U)\} = \rho_x.
$ 
Hence $x \le z \le v$, as required.

\item For all $A \in \C$, $\overline{\sigma_A}$ is surjective.\\
Recall that $\overline{\sigma_A} \colon \S_\C^\delta(A) \to Sub(ev(A))$. Let $H \hookrightarrow ev(A)$. We define, for $N \in \Sp$ and $a \in H(N) \subseteq ev(A)(N) = N(A)$,
$$
\begin{array}{rcl}
\rho_{N, a} &=& \{U \in Sub_\C(A)\,|\, a \in N(U)\}.\\
\end{array}
$$
Note that $\rho_{N,a}$ is a prime filter in $Sub_\C(A)$ and let $x_{N,a}$ be the corresponding completely join-irreducible in $\S_\C^{\delta}(A)$, {\it i.e.}, $x_{N,a} = \bigwedge \rho_{N,a}$. We set
$$
\begin{array}{rcl}
u &=& \bigvee \{x_{N,a}\,|\, N \in \Sp, a \in H(N)\}
\end{array}
$$
and we will show $\overline{\sigma_A}(u) = H$ in $Sub(ev(A))$. Let $M \in \Sp$. We have to show $H(M) \cong \overline{\sigma_A}(u)(M)$. By definition
$$
\begin{array}{rcl}
 \overline{\sigma_A}(u)(M)
&\cong&
\overline{\sigma_A}(\bigvee \{x_{N,a}\,|\, N \in \Sp, a \in H(N)\})(M)\\
&\cong&
\bigvee\{\bigwedge\{\overline{\sigma_A}(U)\,|\,U \in \rho_{N,a}\}\,|\,N \in \Sp, a \in H(N)\}(M)\\
&\cong&
\bigvee\{\bigwedge\{ev(U)(M)\,|\,U \in \rho_{N,a}\}\,|\,N \in \Sp, a \in H(N)\}\\
&\cong&
\bigvee\{\bigwedge\{M(U)\,|\,U \in \rho_{N,a}\}\,|\,N \in \Sp, a \in H(N)\}.
\end{array}
$$
It is easy to see that $H(M) \subseteq \overline{\sigma_A}(u)(M)$. For suppose $b \in H(M)$. Then $b \in \bigwedge\{M(U)\,|\,U \in \rho_{M,b}\}$, whence $b \in \overline{\sigma_A}(u)(M)$. For the converse inclusion, suppose $b \in \overline{\sigma_A}(u)(M)$. Then there exist $N \in \Sp$ and $a \in H(N)$ s.t. $b \in \bigwedge\{M(U)\,|\,U \in \rho_{N,a}\}$. By property (M3) of $\Sp$ there exists $h \colon M \to N$ s.t. $b = h_A(a)$. As $a \in H(N)$ and $H$ is a subfunctor of $ev$ it follows that $b = h_A(a) \in H(M)$:
$$
\xymatrix{
H(N) \ar[r]^-{h_A} \ar[d] & H(M) \ar[d]\\
ev(A)(N)  \ar[r]_{h_A} & ev(A)(M)
}
$$
\end{enumerate}
This completes the proof that $\overline{\sigma}$ is an internal frame isomorphism.\QED
\auxproof{
A natural transformation is an isomorphism if and only if all its components are isomorphisms. Hence, $\sigma$, viewed just as a natural transformation, has an inverse $\tau$, which may be obtained by taking component-wise inverses. As all components of $\sigma$ are frame isomorphisms, also the components of $\tau$ are frame homomorphisms. To show that $\tau$ is an internal frame morphism it is left to show that it preserves existential quantification. Let $A \xrightarrow{\alpha} B$ in $\C$ and consider
$$
\xymatrix{
Sub(ev(A)) \ar[r]^-{\exists_{ev(\alpha)}} \ar[d]_-{\tau_A} & Sub(ev(B)) \ar[d]_-{\tau_B}\\
S_\C^\delta(A) \ar@/_/[u]_-{\sigma_A} \ar[r]_-{(\exists_{\alpha})^\delta} & \S_\C^\delta(B) \ar@/_/[u]_-{\sigma_B}
}
$$
Note that
$$
\begin{array}{rcll}
\sigma_B \after (\exists_\alpha)^\sigma \after \tau_A
&=&
\exists_{ev(\alpha)} \after \sigma_A \after \tau_A &\text{($\sigma$ preserves $\exists$)}\\
&=&
\exists_{ev(\alpha)}\\ 
&=&
\sigma_B \after \tau_B \after \exists_{ev(\alpha)}. 
\end{array}
$$
As $\sigma_B$ is an isomorphism, it follows that $ (\exists_\alpha)^\sigma \after \tau_A =  \tau_B \after \exists_{ev(\alpha)}$.
} 
\end{proof}

\begin{proof}[Aknowledgements] 
I would like to thank Mai Gehrke for suggesting this topic to me and for her stimulating guidance along the way. Furthermore, I would like to thank Ieke Moerdijk and Sam van Gool for the interesting and helpful discussions and for their valuable comments on earlier versions of this paper. I also thank Michael Makkai and Gonzalo Reyes for hosting me for an inspiring week in Montreal. Finally, I would like to thank the referee for his suggestions.
\end{proof}

\bibliographystyle{amsplain}
\bibliography{thesisDC}

\providecommand{\bysame}{\leavevmode\hbox to3em{\hrulefill}\thinspace}
\providecommand{\MR}{\relax\ifhmode\unskip\space\fi MR }
\providecommand{\MRhref}[2]{%
  \href{http://www.ams.org/mathscinet-getitem?mr=#1}{#2}
}
\providecommand{\href}[2]{#2}
\begin{thebibliography}{10}

\bibitem{SGA4}
M.~Artin, A.~Grothendieck, and J.L. Verdier, \emph{Th\'eorie des topos et
  cohomologie \'etale des sch\'emas}, Lecture Notes in Mathematics, vol. 269,
  Springer-Verlag, Berlin, 1972.

\bibitem{Bla1977}
A.~Blass, \emph{Two closed categories of filters}, Fund. Math. \textbf{94}
  (1977), no.~2, 129--143.

\bibitem{Bor1}
F.~Borceux, \emph{Handbook of categorical algebra. 1}, Encyclopedia of
  Mathematics and its Applications, vol.~50, Cambridge University Press,
  Cambridge, 1994, Basic category theory.

\bibitem{Bu04}
C.~Butz, \emph{Saturated models of intuitionistic theories}, Ann. Pure Appl.
  Logic \textbf{129} (2004), no.~1-3, 245--275.

\bibitem{CK}
C.~Chang and H.~Keisler, \emph{Model theory}, third ed., Studies in Logic and
  the Foundations of Mathematics, vol.~73, North-Holland Publishing Co.,
  Amsterdam, 1990.

\bibitem{DGP}
M.~Dunn, M.~Gehrke, and A.~Palmigiano, \emph{Canonical extensions and
  relational completeness of some substructural logics}, J. Symbolic Logic
  \textbf{70} (2005), no.~3, 713--740.

\bibitem{Es}
L.~Esakia, \emph{Topological {K}ripke models}, Dokl. Akad. Nauk SSSR
  \textbf{214} (1974), 298--301.

\bibitem{Geh2012}
M.~Gehrke, \emph{Canonical extensions, {E}sakia spaces, and universal models},
  Leo Esakia on duality in modal and intuitionistic logics, Trends in Logic,
  2012, to appear.

\bibitem{GeHa}
M.~Gehrke and J.~Harding, \emph{Bounded lattice expansions}, J. Algebra
  \textbf{238} (2001), no.~1, 345--371.

\bibitem{GeJo94}
M.~Gehrke and B.~J{\'o}nsson, \emph{Bounded distributive lattices with
  operators}, Math. Japon. \textbf{40} (1994), no.~2, 207--215.

\bibitem{GePr08}
M.~Gehrke and H.A. Priestley, \emph{Canonical extensions and completions of
  posets and lattices}, Rep. Math. Logic (2008), no.~43, 133--152.

\bibitem{Gray74}
J.~Gray, \emph{Formal category theory: adjointness for {$2$}-categories},
  Lecture Notes in Mathematics, Vol. 391, Springer-Verlag, Berlin, 1974.

\bibitem{Hodg}
W.~Hodges, \emph{Model theory}, Encyclopedia of Mathematics and its
  Applications, vol.~42, Cambridge University Press, Cambridge, 1993.

\bibitem{Jac}
B.~Jacobs, \emph{Categorical logic and type theory}, Studies in Logic and the
  Foundations of Mathematics, vol. 141, North-Holland Publishing Co.,
  Amsterdam, 1999.

\bibitem{Joh}
P.T. Johnstone, \emph{Sketches of an elephant}, Oxford Logic Guides, vol. 43
  and 44, Clarendon Press, 2002.

\bibitem{JoTa}
B.~J{\'o}nsson and A.~Tarski, \emph{Boolean algebras with operators, {P}art
  {I}}, Amer. J. Math. \textbf{73} (1951), 891--939.

\bibitem{JoTi}
A.~Joyal and M.~Tierney, \emph{An extension of the {G}alois theory of
  {G}rothendieck}, Mem. Amer. Math. Soc. \textbf{51} (1984), no.~309.

\bibitem{KoMo}
A.~Kock and I.~Moerdijk, \emph{Presentations of \'etendues}, Cahiers Topologie
  G\'eom. Diff\'erentielle Cat\'eg. \textbf{32} (1991), no.~2, 145--164.

\bibitem{KoRe1970}
V.~Koubek and J.~Reiterman, \emph{On the category of filters}, Comment. Math.
  Univ. Carolinae \textbf{11} (1970), 19--29.

\bibitem{Law}
W.~Lawvere, \emph{Equality in hyperdoctrines and comprehension schema as an
  adjoint functor}, Applications of {C}ategorical {A}lgebra ({P}roc. {S}ympos.
  {P}ure {M}ath., {V}ol. {XVII}, {N}ew {Y}ork, 1968), Amer. Math. Soc.,
  Providence, R.I., 1970, pp.~1--14.

\bibitem{Mag}
F.~Magnan, \emph{Le topos des types et le topos des filtres en logique
  cat\'egorique}, 1999, Ph.D. Thesis, Universit\'e de Montr\'eal.

\bibitem{Mak}
M.~Makkai, \emph{The topos of types}, Logic {Y}ear 1979--80 ({P}roc. {S}eminars
  and {C}onf. {M}ath. {L}ogic, {U}niv. {C}onnecticut, {S}torrs, {C}onn.,
  1979/80), Lecture Notes in Math., vol. 859, Springer, Berlin, 1981,
  pp.~157--201.

\bibitem{MakRey77}
M.~Makkai and G.~Reyes, \emph{First order categorical logic}, Lecture Notes in
  Mathematics, Vol. 611, Springer-Verlag, Berlin, 1977, Model-theoretical
  methods in the theory of topoi and related categories.

\bibitem{MakRey95}
\bysame, \emph{Completeness results for intuitionistic and modal logic in a
  categorical setting}, Ann. Pure Appl. Logic \textbf{72} (1995), no.~1,
  25--101.

\bibitem{Mo}
I.~Moerdijk, \emph{Continuous fibrations and inverse limits of toposes},
  Compositio Math. \textbf{58} (1986), no.~1, 45--72.

\bibitem{Pi83}
A.~Pitts, \emph{An application of open maps to categorical logic}, J. Pure
  Appl. Algebra \textbf{29} (1983), no.~3, 313--326.

\bibitem{Pi89}
\bysame, \emph{Conceptual completeness for first-order intuitionistic logic: an
  application of categorical logic}, Ann. Pure Appl. Logic \textbf{41} (1989),
  no.~1, 33--81.

\bibitem{Rey}
G.~Reyes, \emph{Topos of {T}ypes}, 1999, talk at the 70th Peripathetic Seminar
  on Sheaves in Logic, Cambridge.

\end{thebibliography}

\end{document}

For naturality, consider:
$$\begin{bijectivecorrespondence}
  \correspondence[in \Mnd]{\xymatrix{\mathcal{A}(N)\ar[r]^-{\mathcal{A}(f)} & \mathcal{A}(M) \ar[r]^-{\sigma} & T \ar[r]^-{\tau} & S}}
  \correspondence[in \Mon]{\xymatrix{N\ar[r]_-{f} & M \ar[r]_-{\overline{\sigma}} & T(1) \ar[r]_-{\tau_1} & S(1)}}
\end{bijectivecorrespondence}$$

Two cell in xymatrix:
$$
\xymatrix{
&\DL\\
\C \ar[rr]_-{F}\ar[ur]^-{\S_\C} \urtwocell<\omit>{<3>\tau}&& \D\ar[ul]_-{\S_\D}
}
$$